\documentclass[oneside]{amsart}
\usepackage{inputenc}
\usepackage{cite}
\usepackage{amsthm}
\usepackage{stackengine}[2013-09-11]
\usepackage{amsmath}
\usepackage{amsfonts}
\usepackage{amssymb}
\usepackage{amsthm}
\usepackage{colonequals}
\usepackage{tikz}\usetikzlibrary{positioning,decorations.markings, patterns, calc}
\usepackage{pgfplots}\pgfplotsset{compat=1.15}
\usepackage{caption, subcaption}
\usepackage[usenames]{xcolor}
\usepackage[hyperfootnotes=false]{hyperref}

\title[]{Sectorial normalization of one resonant biholomorphisms near a fixed point}
\author[]{Maxime Chatal$^{\dag}$}
\address{Department of Mathematics, Kyushu University, 744 Motooka, Nishi-ku, Fukuoka 819-0395, Japan}
\email{chatal.maxime.207@m.kyushu-u.ac.jp}
\thanks{This work was supported by JSPS KAKENHI Grant Number JP25KF0117.}

\author{Laurent Stolovitch$^{\dag\dag}$}
\address{CNRS and Laboratoire J.-A. Dieudonn\'e
	U.M.R. 7351, Universit\'e C\^ote d'Azur, Parc Valrose
	06108 Nice Cedex 02, France}
\email{stolo@unice.fr}
\thanks{  }

\keywords{biholomorphisms near fixed, normal forms, sectorial conjugacy, holomorphic classification}

\newtheorem{thm}{Theorem}[section]
\newtheorem{cor}[thm]{Corollary}
\newtheorem{prop}[thm]{Proposition}
\newtheorem{lemma}[thm]{Lemma}
\newtheorem{remark}[thm]{Remark}

\newtheorem{defis}[thm]{Definition}

\newcommand{\diag}{\operatorname{diag}}

\theoremstyle{definition}

\newtheorem{rem}[thm]{Remark}

\renewcommand{\th}[1]{\begin{thm}\label{#1}}
\newcommand{\co}[1]{\begin{cor}\label{#1}}
	\newcommand{\eco}{\end{cor}}
\renewcommand{\le}[1]{\begin{lemma}\label{#1}}
	\newcommand{\ele}{\end{lemma}}
\newcommand{\pr}[1]{\begin{prop}\label{#1}}
	\newcommand{\epr}{\end{prop}}

\newcommand{\ga}{\begin{gather}}
	\newcommand{\ega}{\end{gather}}
\newcommand{\gan}{\begin{gather*}}
	\newcommand{\egan}{\end{gather*}}
\newcommand{\al}{\begin{align}}
	\newcommand{\eal}{\end{align}}
\newcommand{\aln}{\begin{align*}}
	\newcommand{\ealn}{\end{align*}}
\newcommand{\eq}[1]{\begin{equation}\label{#1}}
	\newcommand{\eeq}{\end{equation}}

\newcommand{\B}{\mathcal B}

\newcommand{\D}{\mathbb{D}}
\newcommand{\C}{{\mathbb C}}
\newcommand{\N}{{\mathbb N}}
\newcommand{\R}{{\mathbb R}}
\newcommand{\Z}{{\mathbb Z}}

\newcommand{\OO}{\mathcal{O}} 
\newcommand{\GG}{\mathcal{G}}

\newcommand{\ov}{\overline}
\newcommand{\ord}{\operatorname{ord}}

\newcommand{\RE}{\operatorname{Re}}

\newcommand{\s}{\mathcal S}

\newcommand{\I}{\operatorname{I}}

\newcommand{\CH}{\mathcal H}

\newcommand{\re}[1]{(\ref{#1})}

\newcommand{\rl}[1]{Lemma~\ref{#1}}

\newcommand{\rt}[1]{Theorem~\ref{#1}}
\newcommand{\rd}[1]{Definition~\ref{#1}}
\newcommand{\rrem}[1]{Remark~\ref{#1}}

\newcounter{pp}
\newcommand{\bpp}{\begin{list}{$\hspace{-1em}(\alph{pp})$}{\usecounter{pp}}}
	\newcommand{\epp}{\end{list}}

\newcounter{ppp}
\newcommand{\bppp}{\begin{list}{$\hspace{-1em}(\roman{ppp})$}{\usecounter{ppp}}}
		\newcommand{\eppp}{\end{list}}
	
	\def\beq{\begin{equation}}
		\def\eeq{\end{equation}}

\renewcommand{\Re}{\operatorname{Re}}
\renewcommand{\Im}{\operatorname{Im}}	
	\usepackage{time,datetime}
	
\begin{document}
		
		\begin{abstract}
			In the article, we prove the existence of a {\it sectorial} transformation to a polynomial normal form of a non-degenerate 1-resonant biholomorphism in $\C^n$. We also give their holomorphic classification.
		\end{abstract}
		
		\date{\today}
		\maketitle
		

	\setcounter{thm}{0}\setcounter{equation}{0}
		\section{Introduction}
		
		\newcommand{\doi}[1]{\href{http://dx.doi.org/#1}{#1}}
		\newcommand{\arxiv}[1]{\href{https://arxiv.org/pdf/#1}{arXiv:#1}}
		This article deals with the problem of normal forms of so-called $1$-resonant germs of biholomorphisms at a fixed point, say the origin, in $\mathbb{C}^n$, $n\geq 2$, as well as their holomorphic classification. 
		More precisely, we consider germs of biholomorphisms of $(\C^n,0)$, fixing the origin, and of the form
		$$
		z=(z_1,\ldots,z_n)\mapsto F(z)=(F_1(z),\ldots, F_n(z))=(\lambda_1z_1+f_1(z),\ldots, \lambda_nz_n+f_n(z))
		$$
		where the $f_i$'s are germs at the origin and of order $\geq 2$ there. The study of its iterates near a fixed point is a long standing problem which has been formalized by Poincaré. Its solution depends dramatically on the properties of the eigenvalues $\lambda=(\lambda_1,\ldots, \lambda_n)$ of its linear part at the fixed point. 
		These are nonzero complex numbers and $\lambda$ is said to be resonant if $\lambda^Q:=\lambda_1^{q_1}\cdots \lambda_n^{q_n}=\lambda_i$ for some $Q=(q_1,\ldots,q_n)\in \N^n$, $|Q|:=q_1+\cdots +q_n\geq 2$ and some $i\in\{1,\ldots, n\}$. If $\lambda$ is non resonant then $F$ is formally linearizable and holomorphically linearizable in a neighborhood of the origin if $\lambda$ is Diophantine (see \rd{dio}) (see. e.g.\cite{Arnold,Bruno,russmann-lin}). When the linear part is a contraction, i.e. $|\lambda_i|<1$ for all i (or an expansion $|\lambda_i|>1$), there are only a finite number of resonances. In that case,  $F$ is holomorphically conjugated to a polynomial in a full neighborhood of the origin (e.g. \cite{ueda-contraction} ). New difficulties appear when there are an infinite number of resonances. The easiest case is when the resonances are generated by a single {\it resonant monomial} and $\lambda$ is then said to be $1$-resonant: there exists $\beta\in \N^n\setminus \{0\}$ such that, if $\lambda^Q:=\lambda_1^{q_1}\cdots \lambda_n^{q_n}=\lambda_i$ for some $Q=(q_1,\ldots,q_n)\in \N^n$, $|Q|:=q_1+\cdots +q_n\geq 2$ and some $i\in\{1,\ldots, n\}$, then there exists $l\in \N$ such that $Q=l\beta+E_i$ where $E_i=(0,\ldots,0,1,0,\ldots 0)$, $1$ at $i$th spot.
		Poincaré-Dulac normal form theorem \cite{Arnold} shows that there exists a formal change of variables $z=\hat\Phi(Z)=Z+\hat\phi(Z)$, tangent to identity at the origin, such that in these new coordinates:
		$$
		\hat F:=\hat\Phi^{-1}\circ F\circ \hat\Phi(Z)=(\lambda_iZ_i\hat G_i(Z^\beta))_{1\leq i\leq n}.
		$$
		Here $\hat G_i$ is a formal power series in one variable with constant term equal to $1$. However, the transformation is, in general, merely formal and does not converge in any neighborhood of the origin. If the generator of resonances, $z^{\beta}$, is not a first integral of  the normal form, that is $\hat F^{\beta}\neq z^{\beta}$,  we can simplify further the normal form and obtain polynomials for the resonant coordinates, that is for $i=1\ldots, m$, as shown in \cite{Bracci} (See also \cite{jenkins-parbolic} in the case $m=1$). 
		
		Despite some similarity with germs of vector fields at a fixed point, very few results are known about holomorphic normalization of germs of biholomorphisms near a fixed point as well as their holomorphic classification. Our main goal is to prove the existence of "sectorial holomorphic transformation" to such a normal form of {\it non-degenerate} $1$-resonant germs {\bf when all eigenvalues of linear part at the origin are on the unit circle, in any dimension}. By this we mean a biholomorphism of a domain in $\C^n$ which has $0$ on its boundary. We shall further describe germs which are holomorphically conjugate in a full neighborhood of the origin. 
		In dimension greater than $1$, the only known results concern the so-called semi-hyperbolic case in dimension $2$ (i.e. one eigenvalue is 1, and the other is $>1$) which are due to S. Voronin and his students \cite{voronin-semihyperbolic, semihyperbolic}. Similar sectorial phenomena are found in \cite{KMRR25,stolo-klimes}.
		
		A special class of these objects, called {\it quasi-parabolic}, has attracted a lot of attention from the dynamical view point in the sense that most works focus on the existence of attracting/repelling "petals" or parabolic manifold: A germ $F(z)=Az+O(|z|^2)$ of a biholomorphism of $\C^n$ fixing $0$ is said to be quasi-parabolic if $A$ is a diagonal matrix with eigenvalues $1$ and $\lambda_j$, $j=1,\ldots m<n$, with $|\lambda_j|=1$ and $\lambda_j\neq1$. A parabolic manifold for $F$ is given by an injective holomorphic map $g:\Delta\rightarrow\C^n$ continuous up to the boundary such that $\Delta$ is a simply connected domain of $\C^d$ for some $d\leq n$ with $0\in\partial \Delta$, $g(0)=0$, $g(\Delta)$ is $F$-invariant and $F^{\circ p}(g(\zeta))\rightarrow 0$ as $p$ tends to $\infty$ for all $\zeta\in\Delta$. When all eigenvalues are of modulus strictly less than $1$ and there is only one eigenvalue equal to one, the map is called semi-attractive. In these situations, there exist quasi-parabolic curves \cite{hakim94,rong10,rong16,bracci-molino,Bracci}. For a survey we refer to \cite{abate-parabolic}.
		
		The situation is completely understood in dimension $1$ (see e.g. \cite{ilyashenko-yakovenko-book, camacho-sad-book}). In this case, such a biholomorphism $F(z)=\lambda z+ \text{h.o.t.}$ is a {\it parabolic} germ, i.e. $\lambda^p=1$, and has a polynomial normal form $NF=\lambda z(1+cz^{kp}+dz^{2kp})$ \cite{Malgrange-bourbaki, Loday} and in general, the formal transformation $\hat \Phi$ that conjugates $F$ to its normal form $NF$ is a divergent power series. Nevertheless, it can be shown that the diffeomorphism can be transformed into its normal form by means of a "sectorial transformation", that is a holomorphic diffeomorphism of an open sector at the fixed point which is asymptotic to the formal power series transformation in that sector \cite{kimura, birkhoff}. One can prove that the normalizing formal power series transformation is, in fact, {\it summable} in the sense of Ramis \cite{Loday,ramis-ksum, malgrange-somm, ramis-stolo-cours} and that the sectorial conjugacy is a "holomorphic sectorial realization" of that series in the sector. There are several overlapping sectors $\{S_i\}_i$ that form a covering of a neighborhood of $\mathbb{C}\setminus\{0\}$, together with several holomorphic conjugacies $\{\Phi_i\}_i$ to the same normal form $NF$, defined in each of these sectors. In general, these diffeomorphisms do not coincide in the intersection of two consecutive sectors.  Hence, they define a natural {\it isotropy cocycle} $\{\Phi_i\circ\Phi_{i+1}^{-1}\}$ defined on $\{S_i\cap S_{i+1}\}$. It is asymptotic to Identity at the origin and leaves invariant the normal form. One of the main results of Ecalle-Voronin \cite{EcalleIII, voronin} is that such a cocycle determines an equivalence class of such germs of diffeomorphisms up to a conjugacy by biholomorphism of a neighborhood of the origin, tangent to the identity there. See also the recent work by L. Teyssier \cite{teyssier-real}.
\begin{figure}
    \centering
    \includegraphics[width=.3\textwidth]{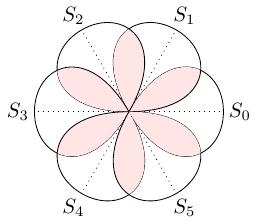}
    \caption{The Leau--Fatou petals $S_j$, $j\in\Z_{2kp}$.}
    \label{figure:parabolic}
\end{figure}
		 These sectors are {\it Leau-Fatou petals} \cite{fatou,Leau}. They are invariant domains under the dynamics, consecutively, attracting or repelling. 
		 
		 This theory of parabolic $1$-dimensional germs was developed almost in parallel to a similar theory of germs of holomorphic vector fields in $(\mathbb{C}^2,0)$ with $0$ as a fixed point, having a saddle-node or resonant saddle point there \cite{Ram-Mart1,Ram-Mart2}. These latter works have been encapsulated under the name {\it $1$-resonant} and generalized in any dimension by the second author \cite{Stolo-classif}. This work can be seen somehow as a surprising diffeomorphism-version of the latter.
		 
		One of the main feature of our method is that we extend the dynamical systems to one dimension more, the new variable playing the role of the resonant monomial. Ultimately, we shall restrict this variable to the monomial itself in order to obtain the result on our original dynamical system. 
\section{Main results}
Let $n \in \N^*$ and $\lambda:=(\lambda_1, ..., \lambda_n)\in(\C^*)^n$.
\begin{defis}\label{1-res} We say that $\lambda$ is {\it 1-resonant} if there exists a non zero $\beta\in \N^n$ such that, if  $\lambda_1^{q_1}\cdots\lambda_n^{q_n}=\lambda_j$ (i.e. a resonant relation) for some $Q:=(q_1,\ldots,q_n)\in \N^n$, $|Q|:=q_1+\cdots+q_n\geq 2$ and $1\leq j\leq n$, then there exists $\ell\in \N^*$ such that $Q=\ell \beta+E_j$, where $E_j$ denotes the $j$-th vector of the canonical basis of $\N^n$. We also say that $\lambda$ is 1-resonant w.r.t. $\beta$.
\end{defis} 
\begin{rem}
	We have $\lambda^{\beta}=1$.
\end{rem}
\begin{defis}\label{dio} 
	We say that $\lambda=(\lambda_1,\ldots,\lambda_n)$ is {\bf Diophantine with respect to the monomial ideal $\I:=(x^{\beta})$} generated by the monomial $x^{\beta}$ if there exist $C>0$ and $\tau\geq 0$ such that 
	for all $Q\in \N^n_2$ (that is, $Q\in\N^n$ with $|Q|\geq 2$) such that $x^Q\not\in\I$ and for all $j=1, ...,n$,
	$$
	|\lambda^Q-\lambda_j|\geq \frac{C}{|Q|^{\tau}}.
	$$
\end{defis}

Let $\tilde F(z)=Dz+f(z)$ be a germ of a holomorphic diffeomorphism of $(\C^{n},0)$ with $D=\diag(\lambda_1, ..., \lambda_n)$ as linear part at the origin.
\begin{equation}\label{system1}
	\tilde F:\left\{
	\begin{array}{l}
		z_1' =  \lambda_1 z_1 + f_1(z) \\
		\vdots  \\
		z_n' = \lambda_n z_n + f_n(z)
	\end{array}
	\right.
\end{equation}
where the $f_i(z) = \OO_{z\rightarrow 0}(|z|^2)$. We assume that $\lambda$ is 1-resonant w.r.t $\beta$.
We order the coordinates so that $\beta=(\beta_1,\cdots,\beta_m,0,\cdots,0)$ and $\beta_i\neq 0$ for all $1\leq i\leq m$. According to Poincaré-Dulac theorem (see \cite{Arnold}), there exists $k\geq 1$, such that, by eliminating all non-resonant terms up to order $k \vert \beta \vert+1$, there exists a polynomial change of coordinates, tangent to the identity, such that system \eqref{system1} is conjugate to 
\begin{equation}\label{system2}
	\left\{
	\begin{array}{l}
		z_1' =  z_1(\lambda_1 + a_1(z^{\beta})^k) + R_1(z) \\
		\vdots  \\
		z_m' =  z_m(\lambda_m + a_m(z^{\beta})^k) + R_m(z) \\
		z_{m+1}' = z_{m+1}(\lambda_{m+1} +P_{m+1}(z^{\beta})) + R_{m+1}(z)\\
		\vdots  \\
		z_{n}' = z_n(\lambda_n +P_n(z^{\beta})) + R_n(z)
	\end{array}
	\right.
\end{equation}
where the $P_i$ are polynomials of degree $\leq k$, $(a_1,\ldots,a_m) \neq 0$ and $R_i(z) = \OO_{z\rightarrow 0}(|z|^{k\vert \beta\vert+2})$.

Following \cite{Bracci}, we say that $F$ is {\it non-degenerate} if
\begin{equation}\label{defA}
A:=\sum_{j=1}^m\frac{a_j\beta_j}{\lambda_j}\neq 0.
\end{equation}

In what follows, $\ord_0$ denotes the order at $0$.
		 Our main result is  
\begin{thm}[Sectorial normalization]\label{main-sect}
Let  $\lambda$ be $1$-resonant w.r.t $\beta=(\beta_1,\ldots, \beta_m,0,\dots, 0)\in \N^n$ with $\beta_1\cdots\beta_m\neq 0$ with $\vert \lambda_i\vert = 1$, $i=1,...,n$, and Diophantine w.r.t $(x^{\beta})$. Let $\tilde F$ be a germ of a biholomorphism of $(\C^{n},0)$ as in \re{system1}.  If $\tilde F$ is non-degenerate, then $\tilde F$ is formally conjugate to a polynomial normal form
	\begin{equation}\label{nf}
		NF:\left\{
		\begin{array}{l}
			z_1' =  z_1(\lambda_1 + a_1(z^{\beta})^k+b_1(z^{\beta})^{2k})  \\
			\vdots  \\
			z_m' =  z_m(\lambda_m + a_m(z^{\beta})^k+b_m(z^{\beta})^{2k})  \\
			z_{m+1}' =  z_{m+1}(\lambda_{m+1} + Q_{m+1}(z^{\beta})) \\
			\vdots  \\
			z_n' = z_n(\lambda_n + Q_n(z^{\beta}))
		\end{array}
		\right.
	\end{equation}
	where $Q_i$ are polynomials of degree $\leq 2k$, vanishing at $0$, with $k\geq 1$. We assume furthermore
	\begin{equation}\label{H_0}
		\tag{$H_0$} \ord_0 Q_i \geq k, \quad \forall i = m+1,...,n.
	\end{equation}
    \begin{equation}
    \label{H_1}
		\tag{$H_1$} \nu_i  := \RE(a_{i,1}(\lambda_i A)^{-1}) >0, \quad \forall i = 1,...,n
	\end{equation}
    where $a_{i,1}:= a_i$ for $i=1,...,m$ and $a_{i,1}$ is the coefficient of $(z^{\beta})^k$ in $Q_i$ for $i=m+1,...,n$.
	Without loss of generality, we can assume that $A = -\frac{1}{k}$ (see \rrem{dilation}). Then there are $2k$ bounded sectors at $0\in \mathbb{C}$ of the form 
    \beq\label{sector}
	S_i=\left\{u\in \C, \left|\arg u-\frac{i\pi}{k}\right|<\frac{\alpha}{k}-\epsilon,\,|u|<r\right\},\, i=0,\ldots 2k-1
	\eeq
	with $\frac{\pi}{2}+k\epsilon < \alpha < \frac{3\pi}{4}$, for some positive $\epsilon, r$,
a small enough $\rho>0$ and for each $i$, a biholomorphism onto its image $\Psi_i:\{z\in\C^n,\,z^{\beta}\in S_i\}\cap \D^n_\rho\rightarrow \C^n$ conjugating $\tilde F$ to its normal form,
	that is $\Psi_i\circ \tilde F= NF\circ\Psi_i$ on $\{z\in\C^n,\,z^{\beta}\in S_i\}\cap \D^n_\rho$.
\end{thm}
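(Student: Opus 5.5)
The plan has two stages: first reduce $\tilde F$ formally to \re{nf}, then realize the normalization on the sectors $S_i$ as a fixed point of a contracting operator.

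\textbf{Formal reduction.} Start from \re{system2}, obtained by Poincaré--Dulac. The generator of resonances satisfies
$$(z^\beta)'=z^\beta\big(1+A\lambda^\beta(z^\beta)^k+\dots\big)=z^\beta\big(1-\tfrac1k (z^\beta)^k+\dots\big),$$
which is a one-dimensional parabolic germ in $u=z^\beta$. Non-degeneracy ($A\neq0$) lets us solve, order by order in $u$, the resonant homological equations for the radial parts $z_i\mapsto z_i(1+c_i(z^\beta))$. As in \cite{Bracci}, this brings the first $m$ components to the form $\lambda_i+a_iu^k+b_iu^{2k}$ and the others to polynomials $Q_i$ of degree $\le 2k$. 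This gives the formal conjugacy $\hat\Phi$.

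\textbf{Extension by one variable.} Following the strategy announced in the introduction, I add a variable $u$ and consider the extended map
$$G(u,z)=\big(u(1-\tfrac1k u^k+b u^{2k}),\,\tilde F(z)\big)$$
on the invariant variety $u=z^\beta$. By the one-dimensional theory (Leau--Fatou, \cite{kimura, birkhoff}), on each petal-shaped sector $S_i$ there is a sectorial conjugacy of the $u$-dynamics to $u\mapsto u(1-\frac1k u^k+bu^{2k})$. With $A=-\frac1k$, the sectors centered at $\arg u=i\pi/k$ of opening $2\alpha/k>\pi/k$ are alternately attracting and repelling petals. On each such sector I pass to the Fatou coordinate $w\sim u^{-k}$, in which the $u$-map is close to the translation $w\mapsto w+1$. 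In the $w$-variable the orbits $u_p$ tend to $0$ like $p^{-1/k}$ (forward on attracting petals, backward on repelling ones), and the angular condition $\alpha>\pi/2+k\epsilon$ keeps the orbits inside $S_i$.

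\textbf{Sectorial conjugacy equation.} Write the unknown conjugacy as $\Psi_i=\mathrm{id}+\psi$ with $\psi=O(|z|^{N})$ flat to high order; this is possible after a polynomial truncation of $\hat\Phi$. The equation becomes
$$\psi\circ\tilde F-\Lambda(z^\beta)\,\psi=\text{(known remainder of high order)}+\text{(nonlinear terms in }\psi),$$
where $\Lambda(u)=\diag(\lambda_j+\dots)$ is the linear part of $NF$ along the $u$-fibres. I would solve this by the telescoping series
$$\psi=\sum_{p\ge0}\Big(\prod_{q<p}\Lambda(u_q)\Big)^{-1}R\circ F^{p}$$
on attracting petals, using the inverse dynamics on repelling ones, combined with a Picard iteration for the nonlinear part. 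I would treat non-resonant monomials $z^Q$ with $z^Q\notin(z^\beta)$ separately, since on these the diagonal action is $\lambda^Q-\lambda_j$. These are handled by the Diophantine condition of \rd{dio}, with small divisors producing only a loss of $|Q|^\tau$, compensated by shrinking the polydisc $\D^n_\rho$.

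\textbf{Main obstacle.} The key estimate is convergence of the orbit sums. Along an orbit,
$$\prod_{q<p}|\lambda_j+a_{j,1}u_q^k+\dots|\approx \exp\Big(\RE\big(a_{j,1}\lambda_j^{-1}\big)\sum_{q<p}u_q^k\Big),$$
and since $u_q^k\sim -\frac{1}{A q}$, this product behaves like $p^{\pm\nu_j}$. Here hypothesis \eqref{H_1} ($\nu_j>0$) is exactly what makes the fibre coordinates contract polynomially along the orbit. Hypothesis \eqref{H_0} ensures that the non-radial components do not spoil this behavior before order $k$. A remainder of order $N$ then decays like $|u_p|^{N'}\sim p^{-N'/k}$, so the sums converge once $N$ is large enough. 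Uniformity on the whole sector $\{z^\beta\in S_i\}\cap\D^n_\rho$ requires that the orbits of $\tilde F$ remain in that domain; I would prove this via a Lyapunov-type estimate using $\nu_j>0$. This stability estimate, done simultaneously for the $z$-orbits and the $u$-orbits with the Diophantine small divisors, is where I expect most of the work. Finally, injectivity of $\Psi_i$ follows because it is close to the identity.
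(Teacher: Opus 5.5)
Your outline identifies the role of \eqref{H_1} and of the orbit sums correctly, but the sectorial step has a genuine gap. You take $\Psi_i=\mathrm{id}+\psi$ after a polynomial truncation of $\hat\Phi$, so the remainder is of order $N$ in $|z|$. You then claim it decays along orbits like $|u_p|^{N'}$. Order in $|z|$ gives no smallness in $u=z^\beta$. The domain $\{z^\beta\in S_i\}\cap\D^n_\rho$ reaches $E=\{z^\beta=0\}$ at points where $z$ itself is not small (e.g.\ $\beta=(1,1)$, $z_1=\rho/2$, $z_2=u/z_1$), and a non-resonant term such as $z_1^N$ stays of size $(\rho/2)^N$ there as $u\to0$.

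Along an orbit, up to bounded factors, the fibre coordinates satisfy only $|z_{i,p}|\approx|z_i|\,|u_p/u_0|^{\nu_i}$, while $\prod_{q<p}|\Lambda_j(u_q)|^{-1}\approx|u_0/u_p|^{\nu_j}$. Since $u_p$ stays comparable to $u_0$ for about $|u_0|^{-k}$ iterates, the absolute bound on your telescoping sum with right-hand side $z^Q$ is of order $|z^Q|\,|z^\beta|^{-k}$. This blows up along $E$, so it cannot give a correction asymptotic to the identity. Your remark about ``treating non-resonant monomials separately'' does not repair this. On a sectorial domain the unknown is not a power series in $z$, so there is no monomial-by-monomial splitting of the nonlinear equation. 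Exploiting the oscillation $(\lambda^Q/\lambda_j)^p$ along nonlinear orbits, for all monomials of a nonlinear remainder at once, is left open.

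What is missing is making the remainder flat in $z^\beta$, uniformly in $z$, \emph{before} any orbit sum is taken. The paper does this in three steps:
\begin{itemize}
\item[(i)] A holomorphic normalization modulo $(z^\beta)^{2k|\beta|+1}$ from \cite{Sto15}. This is itself a nonlinear small-divisor result relying on the Diophantine condition, not a single cohomological equation.
\item[(ii)] A formal normalizing series $\sum_l\varphi_l(z)(z^\beta)^l$. Its coefficients solve $\varphi_l(z)-\varphi_l(\lambda z)=h_l(z)$ or $\lambda_i\varphi_l(z)-\varphi_l(\lambda z)=h_l(z)$, with $h_l$ free of terms divisible by $z^\beta$. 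They are all holomorphic on a common polydisc thanks to the Diophantine condition and the radii $\rho_l=\rho_0\prod_{j\le l}(1-2^{-j})$ (Lemma \ref{phiholo}).
\item[(iii)] A Borel--Ritt realization in $z^\beta$ with parameter $z$. This conjugates $F$ on each sector to $G_*=G+K$ with $K$ flat in $z^\beta$.
\end{itemize}
Only for such flat remainders is the polynomial growth $(1+j)^{M'}$ of $\prod\Lambda^{-1}$ absorbed.

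Your extension step also needs repair. The variety $u=z^\beta$ is not invariant under $(u(1-\frac1ku^k+bu^{2k}),\tilde F(z))$. Moreover, the $u$-dynamics on $\Sigma$ depends on $z$, so one-dimensional Leau--Fatou theory does not supply the monomial conjugacy. The paper instead extends $K$ and $K_0=G_*^\beta-G_0(z^\beta)$ flatly off $\Sigma$ (Lemma \ref{lemRM}), so that $\GG=(G_0+\tilde K_0,\,G+\tilde K)$ preserves $\Sigma$. It then solves $\phi_+\circ\GG=G_0\circ\phi_+$, with $z$ as a parameter, by a contraction in the weighted spaces $E_s$. The other components come from linear equations under the constraint $\Phi_+^\beta=\phi_+$, with $\Phi_{m,+}$ defined as a $\beta_m$-th root.
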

\begin{remark}\label{cov-sect}
If $k\geq 2$, we remark that each sector has opening $\frac{2\alpha}{k}-2\epsilon > \frac{\pi}{k}$, hence $S_i \cap S_{i+1}\neq \emptyset$ for all $i$ (mod $2k$), and we have $\frac{2\alpha}{k} - 2\epsilon < \frac{3\pi}{2k} < \frac{2\pi}{k}$ which implies that $S_i \cap S_{i+2} = \emptyset$ (since $\alpha < \frac{3\pi}{4}$). If $k=1$, there are only two sectors $S_0$ and $S_1$ with again $2\alpha-2\epsilon > \pi$ and then $S_0 \cap S_1 \neq \emptyset$ has two distinct connected components. The collection of the sectors makes a covering of $\D_r\setminus\{0\}$.
\end{remark}
\begin{remark}
    A condition similar to \eqref{H_0} on the order of the $Q_i$ also occurs in \cite[section 3.3, Equation (3.2)]{Stolo-classif} for the sectorial normalization of vector fields. 
\end{remark}
\begin{remark}
    The proof will be done in the most difficult case $\ord_0 Q_i=k$. See Remark \ref{kplusgrand} for more details.
\end{remark}
Let $F_1$ and $F_2$ be two germs of biholomorphisms satisfying the conditions of \rt{main-sect}, having the same normal form NF \re{nf}. Let $\{\Psi_{j,i}\}_{i=0,\ldots, 2k-1}$, $j=1,2$, their collections of sectorial normalizations given by \rt{main-sect}. Let us define 
$$
{\mathcal D}_i:=\{z\in\C^n,\,z^{\beta}\in S_i\}\cap \D^n_\rho.
$$
Let us consider their associated cocycles $C_j:=\{C_{j,i}\}:=\{\Psi_{j,i}\circ \Psi_{j,i+1}^{-1}\}_{i=0,\ldots ,2k-1}$ defined on ${\mathcal D}_i\cap {\mathcal D}_{i+1}$. These are "automorphisms" of the normal form in the sense that $C_{j,i}\circ NF=NF\circ C_{j,i}$ on ${\mathcal D}_i\cap {\mathcal D}_{i+1}$ for all $i$.
\begin{thm}[Holomorphic classification]\label{classif}
Let $F_1$ and $F_2$ be biholomorphisms as above, satisfying assumptions of \rt{main-sect} and formally conjugate. Then, they are holomorphically conjugate in a neighborhood $(\mathbb{C}^{n},0)$ of the origin, by a biholomorphism tangent to the identity, if and only if their associated cocycles coincide, i.e. $C_1=C_2$. 
\end{thm}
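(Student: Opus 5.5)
We argue the two implications separately; the substantive tools are the uniqueness of the sectorial normalizations up to normal-form automorphisms, and a Riemann removable-singularity argument.

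\textbf{Necessity.} Suppose $\Phi$ is a germ of biholomorphism of $(\C^n,0)$, tangent to the identity, with $\Phi\circ F_1=F_2\circ\Phi$. Then $\{\Psi_{2,i}\circ\Phi\}_i$ is a family of sectorial normalizations of $F_1$, defined on slightly shrunk domains ${\mathcal D}_i$, because $\Phi$ preserves $z^\beta$ to leading order. The key step is a uniqueness statement: two sectorial normalizations of the same germ on ${\mathcal D}_i$, both asymptotic to formal normalizations, differ by a sectorial automorphism of $NF$. To use this we must normalize the choice. Both $F_1$ and $F_2$ are formally conjugate to $NF$, and we fix formal normalizations $\hat\Psi_1,\hat\Psi_2$ with $\hat\Psi_2\circ\Phi=\hat\Psi_1$. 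This is possible because the formal centralizer of $NF$ is explicit in the non-degenerate case (time-$t$ flows of the formal infinitesimal generator composed with diagonal torus actions). The sectorial maps $\Psi_{j,i}$ are then understood as the sectorial realizations given by Theorem~\ref{main-sect}, asymptotic to $\hat\Psi_j$. The composition $(\Psi_{2,i}\circ\Phi)\circ\Psi_{1,i}^{-1}$ commutes with $NF$ on ${\mathcal D}_i$ and is asymptotic to the identity. The main claim is then that such a sectorial automorphism asymptotic to the identity on a sector of opening $>\pi/k$ is the identity. To prove it, pass to Fatou-type coordinates for $NF$ in the variable $u=z^\beta$, where $NF$ acts on $u$ as $u\mapsto u(1-u^k+\dots)$, and use that in the directions transverse to the petal the dynamics is rotational with Diophantine eigenvalues. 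The difference from the identity would then have to be a function of the $NF$-invariants that is flat at $0$ on a sector of opening larger than $\pi/k$, hence zero. It follows that $\Psi_{2,i}\circ\Phi=\Psi_{1,i}$, and so $C_{2,i}=\Psi_{2,i}\circ\Phi\circ\Phi^{-1}\circ\Psi_{2,i+1}^{-1}=C_{1,i}$.

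\textbf{Sufficiency.} Suppose $C_1=C_2$. Define $\Phi_i:=\Psi_{2,i}^{-1}\circ\Psi_{1,i}$ on ${\mathcal D}_i$; each $\Phi_i$ conjugates $F_1$ to $F_2$. On ${\mathcal D}_i\cap{\mathcal D}_{i+1}$, the equality $\Psi_{1,i}\circ\Psi_{1,i+1}^{-1}=\Psi_{2,i}\circ\Psi_{2,i+1}^{-1}$ gives $\Phi_i=\Phi_{i+1}$. By Remark~\ref{cov-sect}, the ${\mathcal D}_i$ cover $\D^n_\rho\setminus\{z^\beta=0\}$, including both components of the intersection when $k=1$. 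Hence the $\Phi_i$ glue to a holomorphic map $\Phi$ on $\D^n_\rho\setminus\{z_1\cdots z_m=0\}$. Each $\Phi_i$ is bounded there, being close to the identity since the $\Psi_{j,i}$ are, so by Riemann's extension theorem $\Phi$ extends holomorphically across the hypersurface $\{z^\beta=0\}$. Its derivative at $0$ is the identity, because each $\Psi_{j,i}$ is tangent to the identity in the sectorial asymptotic sense. So $\Phi$ is a germ of biholomorphism tangent to the identity, and $\Phi\circ F_1=F_2\circ\Phi$ holds by continuity.

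\textbf{Main obstacle.} The hard step is the uniqueness of sectorial normalizations in the necessity part. One must show that the isotropy group of $NF$ on a sector of opening $>\pi/k$, restricted to elements asymptotic to the identity, is trivial. I would first try to write a general sectorial symmetry as the time-$\tau$ map of the formal generator of $NF$, with $\tau$ a function of the first integrals of $NF$. The Diophantine condition would be used to rule out contributions from the nonresonant transverse directions, and the sector opening, through Phragmén–Lindelöf, would force the flat remainder $\tau$ to vanish.
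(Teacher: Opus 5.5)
Your skeleton matches the paper's. The sufficiency part (gluing $\Psi_{2,i}^{-1}\circ\Psi_{1,i}$ over the covering of $\D_\rho^n\setminus\{z^\beta=0\}$, Riemann extension, tangency) is exactly its argument. Necessity reduces, as there, to the claim that a sectorial symmetry $\Xi=\mathrm{id}+\xi$ of $NF$ on ${\mathcal D}_i$, with $\xi$ flat in $z^\beta$, is the identity.

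\textbf{The gap.} That claim is the only substantive step, and your proposal does not prove it. Representing an arbitrary sectorial symmetry as a time-$\tau$ map of the formal generator has two problems. It presupposes a sectorial realization of that (divergent) generator. It also misses symmetries that multiply the transverse coordinates by sectorial first integrals; the Diophantine condition does not exclude these. The Diophantine condition, Phragm\'en--Lindel\"of and the opening $>\pi/k$ (which only matters for the gluing) are not needed here.

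The missing idea is the orbit argument behind the uniqueness part of Section 4. Work in $U=(z^\beta)^{-k}$, after Remark~\ref{dilation}.
\begin{itemize}
\item Write $\Xi^\beta=z^\beta+\psi$ with $\psi$ flat. Then $\psi\circ NF-\psi=Q(U,\psi)$, with $Q$ bounded as in \eqref{estimQ}, and the contraction estimate \eqref{estimchi} forces $\psi=0$.
\item For $i\neq m$ this gives $\xi_i\circ NF=\Lambda_i\,\xi_i$, hence $\xi_i=\bigl(\prod_{l<p}\Lambda_i\circ NF^{(l)}\bigr)^{-1}\,\xi_i\circ NF^{(p)}$.
\item By Lemma~\ref{stability} and \eqref{estimG}, flatness gives $|\xi_i\circ NF^{(p)}|\le C_N|U+p|^{-N}$. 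By \eqref{estimdeno} the product is at least $(1+p)^{-M'}$. Letting $p\to\infty$ gives $\xi_i=0$.
\item Finally $\xi_m=0$, by taking a $\beta_m$-th root in $\Xi^\beta=z^\beta$.
\end{itemize}

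\textbf{Where you depart from the paper.} You differ from the paper in pinning down the formal normalizations, and your caution there is warranted. The paper asserts without argument that $\Psi_{2,i}\circ\Theta$ and $\Psi_{1,i}$ have the same asymptotic expansion. This implicitly relies on its Proposition that $\hat G$ has no formal symmetry tangent to the identity other than $\mathrm{id}$. But $\hat G=D\circ H$ with $H$ tangent to the identity and commuting with $D$ (since $\lambda^\beta=1$). So $H=\exp(Y)$ with $Y$ invariant under $D$, and every $\exp(tY)$ commutes with $\hat G$. The proof of that Proposition discards the constant solutions of \eqref{coho0}. At $l=k+1$ such a constant is never determined later, because its contribution at order $2k+1$ carries the factor $(l-k-1)A=0$.

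Your choice $\hat\Psi_1:=\hat\Psi_2\circ\Phi$ sidesteps this. It needs no description of the centralizer: $\Psi_{2,i}\circ\Phi$ is already a sectorial normalization of $F_1$ asymptotic to $\hat\Psi_1$, and the lemma above makes it the only one. You should, however, state what this costs. The cocycle is attached to $F_j$ together with a formal normalization, and you prove equality only for compatible choices.

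For independently fixed normalizations, set $c_i:=\Psi_{2,i}\circ\Phi\circ\Psi_{1,i}^{-1}$. This is a sectorial symmetry of $NF$ asymptotic to the formal symmetry $\hat\Psi_2\circ\Phi\circ\hat\Psi_1^{-1}$, and one only gets $C_{2,i}=c_i\circ C_{1,i}\circ c_{i+1}^{-1}$. For instance, with $F_1=NF$ and $\Psi_{1,i}=\mathrm{id}$, this cocycle is nontrivial as soon as that formal symmetry diverges. So your argument establishes the classification with cocycles taken relative to compatible formal normalizations, equivalently modulo this action. That is the reading under which the statement holds.
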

   	\subsection{Notations}
		Given $\alpha \in (0,\pi)$ and $R\in \R_{>0}$, we define the sectors:
		\[\Delta_+(\alpha,R) = \{z \in \C, - \alpha < \arg(z-R) < \alpha\} = \{ R + t e^{i\theta}, t > 0,-\alpha<  \theta < \alpha\},\]
		\[\Delta_-(\alpha,R) = \{z \in \C, \pi -\alpha < \arg(z+R) < \pi + \alpha\} = \{-R-t e^{i\theta}, t >0,-\alpha<  \theta < \alpha \}.\]

		For all $\rho >0$, we denote by $\D_\rho = \{z \in \C, \vert z \vert < \rho\}$ the open disk of radius $\rho$.

	For $Q=(q_1,\ldots, q_n)\in  \N^n$ and $x=(x_1,\ldots,x_n)\in \C^n$, we shall write
		$$
		|Q|:=q_1+\cdots +q_n,\quad x^Q:=x_1^{q_1}\cdots x_n^{q_n}.
		$$ 
		We shall denote $\N^n_2$, the set of $Q\in\N^n$ such that $|Q|\geq 2$.
		Denote for $z\in \C^n$
		\[ \Vert z \Vert := \sup_{i=1,...,n} \vert z_i\vert.\]

		\section{Preparation}

	According to \cite[Theorem 3.6]{Bracci}, if $\tilde F$ is non-degenerate, there exist complex numbers $b_1,\ldots,b_m$ such that this system is formally conjugate to 
	
	\begin{equation}\label{system4}
		\hat F:\left\{
		\begin{array}{l}
			z_1' =  z_1(\lambda_1 + a_1(z^{\beta})^k+b_1(z^{\beta})^{2k})  \\
			\vdots  \\
			z_m' =  z_m(\lambda_m + a_m(z^{\beta})^k+b_m(z^{\beta})^{2k})  \\
			z_{m+1}' =  z_{m+1}(\lambda_{m+1} + \hat P_{m+1}(z^{\beta})) \\
			\vdots  \\
			z_n' = z_n(\lambda_n + \hat P_n(z^{\beta}))
		\end{array}
		\right.
	\end{equation}
	where $\hat P_i$ are formal series.
		
		Assume that $\{\lambda_1, ..., \lambda_n\}$ is Diophantine w.r.t $\beta$, and that  $\tilde F$ in \re{system1}  is formally conjugate to  $\hat F$ in \re{system4} (its normal form). Then according to \cite{Sto15} (see also  \cite{Pos86} for a similar but more restricted problem), there exists a holomorphic change of coordinates, tangent to the identity which conjugates \eqref{system1} to
		
		\begin{equation}\label{system3}F:
			\left\{
			\begin{array}{c}
					z_1' =  z_1(\lambda_1 + a_1(z^{\beta})^k+b_1(z^{\beta})^{2k}) + (z^{\beta})^{2k|\beta|+1}f_1(z) \\
				\vdots  \\
				z_m' =  z_m(\lambda_m + a_m(z^{\beta})^k+b_m(z^{\beta})^{2k}) + (z^{\beta})^{2k|\beta|+1}f_m(z) \\
				z_{m+1}' =  z_{m+1}(\lambda_{m+1} + P_{m+1}(z^{\beta})) + (z^{\beta})^{2k|\beta|+1}f_{m+1}(z) \\
				\vdots  \\
				z_n' = z_n(\lambda_n +P_n(z^{\beta})) + (z^{\beta})^{2k|\beta|+1}f_n(z)
			\end{array}
			\right.
		\end{equation}
		where $f_i(z) = \OO (1)$. 
		\bigskip

		\begin{lemma}\label{changement}
			There exists a formal change of variables $\Psi=(\psi_1, ..., \psi_n) : \C^{n} \rightarrow \C^{n}$ defined by
			\[\psi_i:z_i \mapsto z_i,\; i=1,\ldots m, \quad \psi_j:z_j \mapsto z_j(1+\varphi_j(z^{\beta})),\;j=m+1,\ldots n\]
			where for each $j=m+1,...,n$, $\varphi_j$ is a formal power series with $\varphi_j(0)=0$, such that $\hat F$ is formally conjugate to
			\begin{equation}\label{system5}
				\hat G:\left\{
				\begin{array}{l}
					z_1' =  z_1(\lambda_1 + a_1(z^{\beta})^k+b_1(z^{\beta})^{2k})  \\
					\vdots  \\
					z_m' =  z_m(\lambda_m + a_m(z^{\beta})^k+b_m(z^{\beta})^{2k})  \\
					z_{m+1}' =  z_{m+1}(\lambda_{m+1} + Q_{m+1}(z^{\beta})) \\
					\vdots  \\
					z_n' = z_n(\lambda_n + Q_n(z^{\beta}))
				\end{array}
				\right.
			\end{equation}
			where the $Q_j$ are polynomials of degree $\leq 2k$ (which are truncation of formal series $\hat P_j$).
		\end{lemma}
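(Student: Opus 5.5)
The plan is to compute the conjugacy equation directly. The key point is that the proposed change of variables does not change the resonant monomial, so everything reduces to $n-m$ scalar difference equations in the single variable $u:=z^{\beta}$.

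Since $\beta=(\beta_1,\ldots,\beta_m,0,\ldots,0)$ and $\Psi$ leaves $z_1,\ldots,z_m$ unchanged, we have $(\Psi(z))^{\beta}=z^{\beta}=u$. The first $m$ components of $\hat F$ and $\hat G$ coincide and depend only on $z_1,\ldots,z_m$ and $u$. Hence the first $m$ components of $\Psi\circ\hat G=\hat F\circ\Psi$ hold automatically. Under either map $u$ is sent to
$$g(u):=u\prod_{i=1}^m\bigl(\lambda_i+a_iu^k+b_iu^{2k}\bigr)^{\beta_i}=u\bigl(1+Au^k+O(u^{k+1})\bigr),$$
using $\lambda^{\beta}=1$ and the definition \eqref{defA} of $A$.

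For $j=m+1,\ldots,n$, the $j$-th component of $\Psi\circ\hat G=\hat F\circ\Psi$ reads, after dividing by $z_j$,
$$\bigl(1+\varphi_j(g(u))\bigr)\bigl(\lambda_j+Q_j(u)\bigr)=\bigl(1+\varphi_j(u)\bigr)\bigl(\lambda_j+\hat P_j(u)\bigr).$$
Since $\lambda_j\neq0$, formal logarithms are available. Set $h_j:=\log(1+\varphi_j)$, a formal series without constant term. The equation becomes
$$h_j(g(u))-h_j(u)=r_j(u),\qquad r_j(u):=\log\Bigl(1+\tfrac{\hat P_j(u)}{\lambda_j}\Bigr)-\log\Bigl(1+\tfrac{Q_j(u)}{\lambda_j}\Bigr).$$
I choose $Q_j$ to be the truncation of $\hat P_j$ at degree $2k$. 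Then $\hat P_j-Q_j=O(u^{2k+1})$, and therefore $r_j=O(u^{2k+1})$.

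It remains to solve the homological-type equation $h(g(u))-h(u)=r(u)$ formally, for $r$ of order at least $2k+1$. Write $h=\sum_{p\geq1}c_pu^p$. Then
$$g(u)^p-u^p=pA\,u^{p+k}+O(u^{p+k+1}),$$
so the coefficient of $u^{N}$ in $h\circ g-h$ equals $(N-k)A\,c_{N-k}$ plus a polynomial expression in $c_1,\ldots,c_{N-k-1}$. Since $A\neq0$ by non-degeneracy, the system is triangular.

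For $N\leq 2k$, the right-hand side vanishes, so we can take $c_1=\cdots=c_k=0$ inductively. For each $N\geq 2k+1$, we solve uniquely for $c_{N-k}$ from the coefficient of $u^N$ in $r_j$. This produces a formal series $h_j$ with $h_j(0)=0$. We then set $\varphi_j:=e^{h_j}-1$, which satisfies $\varphi_j(0)=0$ and gives the required formal $\Psi$.

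There is no real obstacle here. The only points needing care are that $\Psi$ preserves $u$, which is what decouples the equations, and that $A\neq0$ makes the triangular recursion solvable. The degree-$2k$ truncation is chosen precisely so that the coefficients of $u^1,\ldots,u^{2k}$, which cannot all be matched by the recursion without fixing the $c_p$ for $p\leq k$, vanish on the right-hand side.
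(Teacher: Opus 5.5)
Your proof is correct and follows the same route as the paper. Since $\Psi$ fixes $z_1,\dots,z_m$, it preserves $u=z^\beta$, so the first $m$ equations are automatic, and each remaining component reduces to a scalar equation in $u$; this is solved by a triangular recursion whose diagonal coefficient $lA$ (the paper's $\lambda_i lA$) is nonzero by non-degeneracy, once the first $k$ coefficients are forced to vanish. The only difference is cosmetic: you take formal logarithms to get the additive equation $h\circ g-h=r$, whereas the paper solves the affine equation for $\varphi_i$ directly.
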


		\begin{proof}
			We study the conjugacy equation on each coordinate $i=1,...,n$:
			\begin{equation}\label{eqi}
				(\hat F \circ \Psi)_i = (\Psi \circ \hat G)_i.
			\end{equation}
			We recall that $z^\beta$ depends only on $z_1,\ldots,z_m$.
			Since for $i=1,\ldots m$, $\psi_i = id$,  the $i$-equation is trivial. Let $m+1 \leq i \leq n$. For all $z\in \C^{n}$
			\[(\hat F \circ \Psi)_i(z) = \hat F_i (\Psi(z)) = \psi_i(z)(\lambda_i + \hat P_i (z^{\beta})) = z_i (1+ \varphi_i(z^{\beta}))(\lambda_i + \hat P_i(z^{\beta})),\]
			\[(\Psi \circ \hat G)_i(z) = \psi_i(\hat G(z)) = \hat G_i(z)(1+ \varphi_i( \hat G(z)^{\beta})) = z_i(\lambda_i + Q_i(z^{\beta}))(1+ \varphi_i( \hat G(z)^{{\beta}}))\]
			Then \eqref{eqi} becomes
			\[ (1+ \varphi_i(z^{\beta}))(\lambda_i + \hat P_i(z^{\beta})) = (\lambda_i + Q_i(z^{\beta}))(1 + \varphi_i(z^{\beta}) + [\varphi_i(\hat G(z)^{\beta}) - \varphi_i(z^{\beta})])\]
			We have 
			\begin{align}
			\hat G^{\beta}(z)=z^{\beta}\prod_{i=1}^m \big(1+ \frac{a_i}{\lambda_i} z^{k \beta} + \frac{b_i}{\lambda_i} z^{2k\beta } \big)^{\beta_i}&
			=z^{\beta}(1+A (z^{\beta})^k + B(z^{\beta})^{2k} + P((z^{\beta})^k)),\label{gbeta}\\
			P(u^k) = \sum_{j=3}^{2\vert \beta \vert}c_ju^{kj}.&
			\end{align}
			where $A= \sum_{j=1}^{m} \frac{\beta_j a_j}{\lambda_j}$.
			Write $\varphi_i(u) = \sum_{l\geq 1} \varphi_{i,l}u^l$, then
			\begin{align*}
				\varphi_i(\hat G(z)^{\beta})-\varphi_i(z^{\beta}) & = \sum_{l\geq 1}\varphi_{i,l}z^{l\beta}(1+A (z^{\beta})^k + B(z^{\beta})^{2k} + P((z^{\beta})^k))^l - \sum_{l\geq 1}\varphi_{i,l}(z^{\beta})^l \\
				& = \sum_{l \geq 1}\varphi_{i,l}z^{l\beta}\left((1+A (z^{\beta})^k + B(z^{\beta})^{2k} + P((z^{\beta})^k))^l-1\right) \\
				& = \sum_{l\geq 1} \varphi_{i,l}\big(lA(z^{\beta})^k + \varepsilon_{i,l}(z^{\beta}) \big)(z^{\beta})^{l}
			\end{align*} 
			where $\varepsilon_{i,l} = \OO(\vert z^{\beta} \vert^{2k})$. Finally, \eqref{eqi} can be written
			\begin{equation}
				(1+\varphi_i(z^{\beta}))(Q_i(z^{\beta})-\hat P_i(z^{\beta})) = (\lambda_i+ Q_i(z^{\beta}))\Big(\sum_{l\geq 1} \varphi_{i,l} \big[(lA(z^{\beta})^k + \varepsilon_{i,l}(z^{\beta}) \big](z^{\beta})^l\Big).
			\end{equation}
			This can be replaced by equation in a new variable $z_0$:
			\begin{equation}\label{equaphi}
				(1+\varphi_i(z_0))(Q_i(z_0)-\hat P_i(z_0)) = (\lambda_i+ Q_i(z_0))\Big(\sum_{l\geq 1} \varphi_{i,l} \big[(lAz_0^k + \varepsilon_{i,l}(z_0) \big]z_0^l\Big).
			\end{equation}
			Since $Q_i - \hat P_i$ is of order $2k+1$ in $0$, we can choose $\varphi_{i,l}=0$ for $l=1,...,k$ so that $\eqref{equaphi}$ is satisfied. Then $(Q_i - \hat P_i)\varphi_i$ is of order $\geq 3k+2$. 
			Recursively, for $l \ge k+1$, the coefficient of $z_0^{l+k}$ in $\varphi_i(\hat G(z)^{\beta}) - \varphi_i(z^\beta)$ is  
			\[
			\lambda_i lA \varphi_{i,l} = \mathcal{F}_{i,l},
			\]  
			where $\mathcal{F}_{i,l}:=\mathcal{F}_{i,l}( \varphi_{i,k+1}, ..., \varphi_{i,l-1})$ is a known polynomial depending on the previous coefficients $\varphi_{i,j}$, $j = k+1,...,l-1$, and on the polynomials $Q_i$ and $\hat P_i$. By non-degeneracy  $A \neq 0$, hence we define  
			\[\varphi_{i,l} = \frac{\mathcal{F}_{i,l}}{\lambda_i lA},\]  
			which determines all the coefficients $\varphi_{i,l}$ for all $l \geq k+1$.
			
		\end{proof}

		\begin{lemma}\label{phiholo}
			Assume that $\lambda$ is Diophantine w.r.t. $(z^{\beta})$ and let $G:=\hat G$ be the normal form \eqref{system5}. Then, for $1\leq i\leq n$, there exists $\varphi_{i,\geq (2|\beta|k+1)}$ a formal series in $z^{\beta}$  with holomorphic coefficients in a common neighborhood of $0$ in $z = (z_1, ..., z_n)$, of order $\geq (2|\beta|k+1)$ in $z^{\beta}$ at the origin, such that $G \circ \hat \Phi = \hat \Phi \circ F$ with $\hat \Phi = id + \varphi_{\geq (2|\beta|k+1)}$, where $\varphi = (\varphi_i)_i$.
		\end{lemma}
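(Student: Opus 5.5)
We look for $\hat\Phi=\mathrm{id}+\varphi$, where each component is $\varphi_i(z)=\sum_{l\geq 2|\beta|k+1}\varphi_{i,l}(z)\,(z^{\beta})^l$. The coefficients $\varphi_{i,l}$ are to be holomorphic on a fixed polydisc, and we solve $G\circ\hat\Phi=\hat\Phi\circ F$ order by order in the ideal generated by $z^{\beta}$. Write $F=G+R$ with $R_i=(z^{\beta})^{2k|\beta|+1}f_i(z)$; this is possible because \eqref{system3} and \eqref{system5} have the same polynomial part, after composing with the convergent change of \rl{changement} truncated, or equivalently after replacing $P_j$ by $Q_j$ up to order $2k$. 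The conjugacy equation then reads
\begin{equation*}
\varphi_i\circ F-\big(DG\cdot\varphi\big)_i=G_i(z+\varphi)-G_i(z)-(DG\cdot\varphi)_i-R_i ,
\end{equation*}
where the right-hand side is at least quadratic in $\varphi$, apart from the source term $R_i$.

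We split every holomorphic function $h$ near $0$ according to the monomial ideal $\I$: write $h=\sum_{l\geq0}h_l(z)(z^{\beta})^l$, where each $h_l$ contains only monomials not divisible by $z^{\beta}$. This decomposition is unique and preserves convergence on polydiscs with uniform norms. Since $F(z)^{\beta}=z^{\beta}(1+O((z^{\beta})^k))$, the operator $\varphi\mapsto\varphi\circ F-D\varphi$ is triangular with respect to the power $l$ of $z^{\beta}$. Its diagonal part at level $l$ acts on $h_l$ by $h_l\mapsto h_l\circ D-\lambda_i h_l$, and on each monomial $z^Q$ with $z^Q\notin\I$ and $\lambda^{l\beta}=1$ it is multiplication by $\lambda^Q-\lambda_i$. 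The resonant case $Q=E_i$, i.e. the coefficient of $z_i(z^{\beta})^l$, is not solved at this stage: it is absorbed because $G$ is the fixed formal normal form, or, more precisely, it is exactly where the formal conjugacy of \rl{changement} and \cite[Theorem 3.6]{Bracci} guarantees that the equation is solvable. The linear parts of the $G$ terms involving $(z^{\beta})^k$ shift the level $l$ upward. At level $l$ we therefore obtain a cohomological equation
\begin{equation*}
\varphi_{i,l}\circ D-\lambda_i\varphi_{i,l}=\mathcal{R}_{i,l}\big(\varphi_{\cdot,<l}\big),
\end{equation*}
where the right-hand side is a holomorphic function, not in $\I$, that depends polynomially on the previously determined $\varphi_{j,l'}$ with $l'<l$ and on their derivatives; derivatives enter through the Taylor expansion of $\varphi\circ F$.

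We solve this inductively. By the Diophantine hypothesis of \rd{dio}, the divisors satisfy $|\lambda^Q-\lambda_i|\geq C|Q|^{-\tau}$ for $z^Q\notin\I$. Hence each $\varphi_{i,l}$ is holomorphic on a slightly smaller polydisc, with a loss $\|\varphi_{i,l}\|_{\rho'}\leq C'(\rho-\rho')^{-\tau}\|\mathcal R_{i,l}\|_\rho$, using $|\lambda_i|=1$ so that composition with $D$ preserves polydiscs. The statement asks only for a formal series in $z^{\beta}$ with holomorphic coefficients on a common neighborhood. We therefore need a fixed polydisc on which all $\varphi_{i,l}$ live, but no bound on how they grow with $l$.

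This common domain is the main obstacle. A naive induction shrinks the radius at each level $l$, and infinitely many levels would shrink it to zero. Our plan is to exploit the fact that, in $z^Q\notin\I$, the exponents of $z_1,\dots,z_m$ are bounded by $\beta$, so only the powers of $z_{m+1},\dots,z_n$ are unbounded. We would first try to set up the problem as in \cite{Sto15}: treat the coefficients $\varphi_{\cdot,l}$ collectively, and run the Newton-type/majorant scheme of \cite{Sto15}, which was already used to produce \eqref{system3}, with $z^{\beta}$ treated as a formal parameter. That scheme solves the nonlinear equation in each finite jet in $z^{\beta}$ on a domain independent of the jet order. Concretely, for each $N$ we apply the holomorphic normalization of \cite{Sto15} modulo $(z^{\beta})^{N}$; the coefficient $\varphi_{\cdot,l}$ is determined uniquely once the levels $<l$ are fixed, provided we normalize it to have no resonant terms. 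The radii are chosen via a convergent product $\prod(1-\delta_j)$ with $\sum\delta_j<\infty$, indexed by dyadic blocks of total degree rather than by $l$, so that every $\varphi_{i,l}$ is holomorphic on $\D^n_{\rho/2}$. Uniqueness of the normalized solution ensures that the coefficients obtained for different $N$ are compatible, which yields the formal series $\hat\Phi$.
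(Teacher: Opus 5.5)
Your splitting along $\I$ and the level-by-level cohomological equations with the Diophantine estimate are the right frame. There is, however, a genuine gap at the step you yourself call the main obstacle: you misdiagnose it, and what you put in its place is not an argument. The lemma only asks that all coefficients be holomorphic on one polydisc, with no control of their growth in $l$ (as you note). So the naive induction works as soon as the losses of radius are summable, and this is exactly what the paper does. It takes $\rho_l=\rho_0\prod_{j=1}^{l}(1-2^{-j})$. The right-hand side at level $l$ involves only finitely many earlier coefficients (and derivatives), evaluated at $\lambda z$, which stays in the polydisc because $|\lambda_j|=1$. Hence it is holomorphic on $\D^n_{\rho_{l-1}}$, and \cite[Lemma 2.8.4]{Kat} gives a solution bounded on $\D^n_{\rho_l}$. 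The constants blow up with $l$, which is irrelevant, and all coefficients are holomorphic on $\D^n_{\rho_\infty}$ with $\rho_\infty=\rho_0\prod_{j\geq1}(1-2^{-j})>0$. Your last paragraph does mention a convergent product of radii, but explicitly refuses to index it by $l$. It rests instead on the claim that \cite{Sto15}, applied modulo $(z^\beta)^N$, works on a domain independent of $N$, with compatibility in $N$ by uniqueness. That uniformity in $N$ is precisely the content of the lemma. You assert it rather than prove it, and it is not what \cite{Sto15} supplies; the paper invokes it for one fixed order only.

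Two further steps fail as written.

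First, your operator is not triangular in $l$ when $k=1$. For $i,j\le m$, $j\neq i$, the term $\partial_{z_j}G_i\,\varphi_j=a_ik\beta_j\,z_iz^{\beta-E_j}(z^\beta)^{k-1}\varphi_j+\cdots$ stays at level $l$; the same happens for $i>m$ when $\ord_0Q_i=1$. So the level-$l$ equations form a coupled system, not the scalar equations $\varphi_{i,l}\circ D-\lambda_i\varphi_{i,l}=\cdots$ you solve. The paper removes this coupling with the extra variable $z_0$:
\begin{itemize}
\item It first solves the scalar equation for $\hat\Phi_0=\hat\Phi^\beta$. This equation is strictly triangular, since its linearization involves $G_0'(z_0)=1+(k+1)Az_0^k+\cdots$ and $F(z)-\lambda z\in\I$, both of which raise the level.
\item Then $G_i(\hat\Phi)=\hat\Phi_i(\lambda_i+a_i\hat\Phi_0^k+b_i\hat\Phi_0^{2k})$ sees the other components only through the known $\hat\Phi_0$. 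So the equations for $i\le m-1$ and $i\ge m+1$ decouple.
\item Finally, $\hat\Phi_m$ is defined as the $\beta_m$-th root of $\hat\Phi_0/\prod_{i<m}\hat\Phi_i^{\beta_i}$, which enforces $\hat\Phi^\beta=\hat\Phi_0$.
\end{itemize}

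Second, the resonant coefficients are not ``absorbed'' by the mere existence of a formal conjugacy. Suppose $|\beta|\ge2$ and $\varphi$ has order $\ge 2|\beta|k+1$. Then nothing in the equation produces the resonant monomial $z_i(z^\beta)^{2|\beta|k+1}$, $i\le m$, except the $z_i$-coefficient of $f_i$:
\begin{itemize}
\item the diagonal part kills $z_i$;
\item the cross terms carry the factor $z_iz^{\beta-E_j}$, of degree $|\beta|\ge2$;
\item the remaining terms lie at higher levels;
\item a free coefficient $a_{i,l}z_i$ acts only $k$ levels later, via the term $Au^{k+1}$ of $G_0$ (compare $\lambda_i lA\varphi_{i,l}$ in the proof of \rl{changement}).
\end{itemize}
So that coefficient would have to vanish, and formal conjugacy does not imply this. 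In general one must use the free resonant coefficients and start at a lower order. The paper's proof passes over this point too, so at the stated order the equation is in general not solvable.
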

		
		\begin{proof}
			We can write $f_i(z) = \sum_{l \geq 1}f_{i,l}(z)(z^{\beta})^l$ where the $f_{i,l}$'s are holomorphic functions in a common neighborhood of the origin in $\C^n$ {\bf whose Taylor expansions do not contain monomials divisible by $z^{\beta}$}. In what follows, $\{f(z)\}_l$ will denote the coefficient of $(z^{\beta})^l$ the Taylor expansion of which contains no term divisible by $z^{\beta}$. We then define $f_i(z_0,z)$ as $\sum_{l \geq 1}f_{i,l}(z)z_0^l$. We are looking for $\varphi_i(z_0,z) = \sum_{l \geq (2k|\beta|+1)} \varphi_{i,l}(z)z_0^l$ with the same properties and such that $z+\varphi(z^{\beta},z)$ conjugates $F$ to its normal form $G$.
			
			We emphasize that the product of $f(z)z_0^l$ and $g(z)z_0^k$ gives rise to terms of degree $\geq k+l$ as the product of a term of $f$ with a term of $g$ could be divisible by $z^{\beta}$, say $z^{q\beta+Q}$ with some $Q \in \N^n$ such that $z^Q$ is not divisible by $z^\beta$. Hence, the product will give rise to sums of terms of the form $h(z)z_0^{k+l}$ and $\tilde h(z)z_0^{k+l+q} \mod z_0-z^{\beta}$.
			Setting $\varphi_0(z^\beta,z) :=  \hat \Phi(z)^\beta -z^\beta$, we extend $G$, $\hat \Phi$ and $\varphi_0$ to $\C^{n+1}$ by setting \begin{align}\label{G0} G_0(z)&:=G^{\beta}(z)=z^{\beta}(1+A (z^{\beta})^k + B(z^{\beta})^{2k} + P((z^{\beta})^k)).\\
			G_0(z_0,z) &:=z_0(1+A (z_0)^k + B(z_0)^{2k} + P((z_0)^k)).\nonumber\\
			\hat \Phi_0(z)&:=\hat \Phi(z)^{\beta}=z^{\beta}+\varphi_0(z^\beta,z)=z^{\beta}+(z^\beta)^{(2|\beta|k+2)}\tilde\varphi_0(z^\beta,z)\nonumber\\
			\hat \Phi_0(z_0,z)&=z_0+z_0^{(2|\beta|k+2)}\tilde\varphi_0(z_0,z)\nonumber
			\end{align} 
			Notice that actually $G_0(z_0,z)$ does not depend on $z$ but we keep this notation for consistency when it is applied to $\hat \Phi_0$, and that $\varphi_0$ is of order $\geq 2\vert \beta \vert k +2$ since $\hat \Phi$ is of order $\geq 2 \vert \beta \vert k+1$ and $\beta_i \geq 1$ for $i=1,...,m$. We emphasize that the degree of $P(z_0^k)$ is less or equal to $2k|\beta|$.
		
			According to \re{gbeta}, we have
			\begin{align*}
				F^{\beta}=z^{\beta}\prod_{i=1}^m \big(1+ \frac{a_i}{\lambda_i} z^{k \beta} + \frac{b_i}{\lambda_i} z^{2k\beta } +(z^{\beta})^{(2|\beta|k+1)}z_i^{-1} f_i(z)\big)^{\beta_i}&\\
				=:z^{\beta}\big(1+A (z^{\beta})^k + B(z^{\beta})^{2k} + P((z^{\beta})^k)\big)+ (z^{\beta})^{(2|\beta|k+2)}f_0(z)\\
				=G_0(z^{\beta},z)+ (z^{\beta})^{(2|\beta|k+2)}f_0(z)&\\
				F_0(z_0,z):=G_0(z_0,z)+ z_0^{(2|\beta|k+2)}f_0(z)&
		\end{align*}
	and we write $f_0(z_0,z)= \sum_{l\geq 0}f_{0,l}(z)z_0^l$ as above.
		We first study the conjugacy equation for the $0$ component: 
			\begin{align}\label{conjG0}
				(G\circ \hat \Phi)_0(z) = (\hat \Phi \circ F)_0(z) \Leftrightarrow G_0(\hat \Phi(z)) = F^{\beta}(z) + \varphi_0(F_0(z),F(z))= F_0(z) + \varphi_0(F_0(z),F(z)) &
			\end{align} 
			Since $G_0(\hat \Phi(z)) =G_0(\hat \Phi(z)^{\beta},z) =G_0(\hat\Phi_0(z^\beta,z),z)$, it is sufficient to solve
			$$
			G_0(\hat \Phi_0(z_0,z)) =  z_0\big(1+Az_0^k  + B z_0^{2k}+P(z_0^k)\big) + z_0^{(2|\beta|k+2)}f_0(z) +\varphi_0(F_0(z_0,z),F(z_0,z)),
			$$
			and then to set $z_0=z^{\beta}$.
			By the Taylor formula in $0$, we have
			\begin{align}
				G_0(\hat \Phi_0 (z_0,z))-G_0(z_0) = G_0(z_0 + \varphi_0(z_0,z))-G_0(z_0)&\nonumber\\ 
				 = \sum_{l = 1}^{2|\beta|k+1}\frac{1}{l!}\frac{d^l}{dz_0^l}(z_0 + Az_0^{k+1}+B z_0^{2k+1}+z_0P(z_0^k)) \cdot \varphi_{0}(z_0,z) ^{l}\label{util40term}\\ \qquad 
				= (1 + (k+1)Az_0^{k}+(2k+1)Bz_0^{2k}+P(z_0^k)+kz_0^{k}P'(z_0^k))\varphi_0 + R(\varphi_0)&\nonumber
			\end{align}
			where $R(\varphi_0)$ is non linear in $\varphi_0$ and $P'$ denotes the derivative of $P$. Then, if $\varphi_0(z_0,z) = \sum_{m \geq 2|\beta|k+2}\varphi_{0,m}( z)z_0^m$ and $\varphi_{0,l} = 0$ for all $l\leq 2|\beta|k+1$, the conjugacy equation becomes
			\begin{align*}
				& (1 + (k+1)Az_0^{k}+(2k+1)B z_0^{2k}+P(z_0^k)+kz_0^{k}P'(z_0^k))\varphi_0 + R(\varphi_0) \\
				& = z_0^{2|\beta|k+2}f_0(z) + \varphi_0(z_0,\lambda z) + \big(\varphi_0(F_0(z),F(z))-\varphi_0(z_0, \lambda  z) \big) \\
				\Leftrightarrow & (1 + (k+1)Az_0^{k}+(2k+1)B z_0^{2k}+P(z_0^k)+kz_0^{k}P'(z_0^k))\big(\sum_{m\geq 2|\beta|k+2}\varphi_{0,m}(z)z_0^m\big) + R(\varphi_0)  \\
				&= z_0^{2|\beta|k+2}\big(\sum_{m\geq 0}f_{0,l}(z)z_0^m\big) + \sum_{m\geq 2|\beta|k+2}\varphi_{0,m}(\lambda z)z_0^m + \big(\varphi_0(F_0(z_0,z),F(z_0,z))-\varphi_0(z_0, \lambda z) \big)\\
			\end{align*}

Let us assume by induction that $\varphi_{0,2|\beta|k+2}(z), ...,\varphi_{0,l-1}(z)$ are known and holomorphic on $\D_{\rho_0}^n$.
Since 
\begin{align}
	-\left\{\big((k+1)Az_0^{k}+(2k+1)B z_0^{2k}+P(z_0^k)+kz_0^{k}P'(z_0^k)\big)\left(\sum_{m\geq 2|\beta|k+2}\varphi_{0,m}(z)z_0^m\right)\right\}_l\nonumber\\
	- \{R(\varphi_0)\}_l(z) +  f_{0,l-2\vert \beta \vert k-2}(z) + \{ \varphi_0(F_0(z_0,z),F(z_0,z))-\varphi_0(z_0,\lambda  z)\}_l:=h_l(z)\label{h0}
\end{align} 
depends polynomially on terms of $\varphi_{0,2|\beta|k+2}(z), ...,\varphi_{0,l-1}(z)$ (we recall that the coefficient of $z_0^{l}$ of a nonlinear expression might give rise to terms non divisible by $z^{\beta}$ of some degrees $z_0^{l+k}$), we get that the degree $l$ term in $z_0$ of the conjugacy equation reads: 
			\begin{equation} \label{coho0} \varphi_{0,l}( z) - \varphi_{0,l}(\lambda z) = h_l(z).\end{equation}
By definition, $h_l$ is not divisible by $z^{\beta}$ so that the previous equation has a unique solution, not divisible by $z^{\beta}$.
			
			Let $\rho_l = \rho_0 \prod_{j=1}^l(1-2^{-j})$ for $l \geq 2|\beta|k+1$. Let us assume that for all $j < l$ the coefficients $\varphi_{0,j}$ are holomorphic and {\it bounded} on $\D_{\rho_j}^n$. Then $h_l$ is holomorphic and bounded on $\D_{\rho_{l-1}}$. According to \cite[Lemma. 2.8.4]{Kat} and since $\lambda$ is Diophantine, $\varphi_{0,l}$ is holomorphic and bounded on $\D_{\rho_{l}}^n$ which proves the induction. 
			
			\bigskip
			We now study in the same way the coefficients of $\varphi_i$ for $i=1,...,n$. 
			We construct a complete conjugacy $\hat \Phi = (\hat \Phi_1,...,\hat\Phi_n)$, with each $\hat \Phi_i$ tangent to the identity such that
			\begin{equation} \hat\Phi \circ  F = G \circ\hat\Phi \label{conjuggf}\end{equation}
			which satisfies
			\begin{equation}\label{constraint} \hat\Phi(z)^\beta = \hat \Phi_0(z^\beta,z).\end{equation}
			We shall construct first $\hat \Phi_{i}$ for $i=1,...,m-1$, assuming the constraint \eqref{constraint} is satisfied. Then we will construct $\hat\Phi_{m}$ to ensure that condition \eqref{constraint} is indeed satisfied. Finally we shall construct $\hat\Phi_{i}$ for $i = m+1,...,n$. 
			For all $i=1,...,m-1$, let us set
			\[ \hat\Phi_{i}(z) = z_i + \varphi_{i}(z).\]
			For a given $1\leq i\leq m-1$, on the one hand, we have
			\[ \hat\Phi_{i}(F(z)) = F_{i}(z) + \varphi_{i}(F(z)) = G_i(z) + (z^{\beta})^{2|\beta|k+1}f_i(z) + \varphi_{i}(F(z)). \]
			On the other hand, according to \eqref{constraint}, we have 
			\begin{align*}
				G_i(\hat\Phi(z)) & = \hat\Phi_{i}(z)\big(\lambda_i+a_i(\hat\Phi(z)^\beta)^k + b_i (\hat\Phi(z)^\beta)^{2k}\big) \\
				& = \hat\Phi_{i}(z)\big(\lambda_i+a_i(\hat\Phi_0(z^\beta,z))^k + b_i (\hat\Phi_0(z^\beta,z))^{2k}\big) \\
				& = (z_i + \varphi_{i}(z))\big(\lambda_i+a_i(\hat\Phi_0(z^\beta,z))^k + b_i(\hat\Phi_0(z^\beta,z))^{2k}\big).
			\end{align*}
			Hence, the $i$th component of conjugacy equation \eqref{conjuggf} can be written as
			\begin{align} & G_i(z) +(z^{\beta})^{2|\beta|k+1}f_i(z)  + \varphi_{i}(F(z)) = \label{equi}\\
				& (z_i + \varphi_{i}(z))\big(\lambda_i+a_i(\hat\Phi_0(z^\beta,z))^k + b_i (\hat\Phi_0(z^\beta,z))^{2k}\big).\nonumber
			\end{align}
			Note that
			\begin{align*}
				G_i(z) - z_i \big(\lambda_i+a_i(\hat \Phi_0(z^\beta,z))^k + b_i (\hat\Phi_0(z^\beta,z))^{2k}\big) 
				& = z_ia_i((z^\beta)^k - \hat\Phi_0(z^\beta,z)^k) \\
				&+ z_ib_i ((z^\beta)^{2k}-\hat\Phi_0(z^\beta,z)^{2k}).
			\end{align*}
			Since $\hat\Phi_0 (z^{\beta},z)= z^{\beta} + \varphi_{0}(z^{\beta},z)=z^{\beta}+\left(\sum_{m\geq 2|\beta|k+2}\varphi_{0,m}(z)(z^{\beta})^m\right)$ with $\varphi_{0,l}$ all holomorphic on a same domain, we obtain 
			\begin{align}\label{equif}
				\varphi_{i}(\lambda z)-\lambda_i\varphi_{i}(z)=\varphi_{i}(z)\big(a_i(\hat\Phi_0(z^\beta,z))^k + b_i (\hat\Phi_0(z^\beta,z))^{2k}\big)&\nonumber\\
			-z_i\left(a_i((z^\beta)^k - \hat\Phi_0(z^\beta,z)^k)+b_i((z^\beta)^{2k}-\hat\Phi_0(z^\beta,z)^{2k})\right)\nonumber\\
			-(z^{\beta})^{2|\beta|k+1}f_i(z)+(\varphi_{i}(\lambda z)-\varphi_{i}(F(z))).
			\end{align}
			As we have
			\begin{align*} 
				\varphi_{i}(F(z))=\sum_{m\geq 2|\beta|k+1}\varphi_{i,m}(F(z))(F^{\beta}(z))^m\\
				= \sum_{m\geq 2|\beta|k+1}\varphi_{i,m}(F(z))\left(G_0(z^{\beta},z)+ (z^\beta)^{2|\beta|k+2}f_0(z)\right)^m=\varphi_{i}(\lambda z)+(z^{\beta})^k\CH_i(\varphi_i,z), \end{align*}
			the $l$-th component  of \re{equif} (i.e $\{\re{equif}\}_l$) reads
			\begin{equation} \label{cohoi} \lambda_i\varphi_{i,l}(z) - \varphi_{i,l}(\lambda z) = h_l(z).\end{equation}
            
			As  the Taylor expansion of $h_l(z)$ is not divisible by $z^{\beta}$, the solution of the previous equation is unique up to a monomial $a_{i,l} z_i$ with $a_{i,l}\in\C$ (indeed, the monomial $a_{i,l}z_i$ satisfies $\lambda_i a_{i,l} z_i - a_{i,l}(\lambda z)_i = 0$ and is then in the kernel of the map $\varphi \mapsto \lambda_i \varphi (z)-\varphi(\lambda z)$). We choose the solution without this monomial term which is then unique, and applying the same reasoning as for $\varphi_{0,l}$, we prove by induction that the $\varphi_{i,l}$ are all defined and holomorphic on a same polydisc.
			
			\textbf{Construction of $\hat\Phi_{m}$: } Let us set
			\beq\label{Phim}\hat\Phi_{m}(z) := \left(\frac{\hat\Phi_0(z^\beta,z)}{\hat \Phi_{1}(z)^{\beta_1}\cdots\hat \Phi_{m-1}(z)^{\beta_{m-1}}}\right)^\frac{1}{\beta_{m}}.\eeq
			
			We first prove that $\hat\Phi_{m}$ is well defined: notice that
			\[\prod_{i=1}^{m-1}\hat\Phi_{i}(z)^{\beta_i} = \prod_{i=1}^{m-1}(z_i + \varphi_{i}(z))^{\beta_i} = \frac{z^\beta}{z_m^{\beta_m}} + \hat\sigma(z)\]
			where $\hat\sigma$ is a formal power series in $z^{\beta}$ (of positive order), with coefficients holomorphic in a same polydisc.  
			
			As both $\varphi_0(z^{\beta},z)$ and $\hat\sigma$ are power series in $z^{\beta}$, with no constant terms and with coefficients holomorphic on a same polydisc, we have
			\begin{align}\label{phif} \frac{\hat\Phi_0(z^\beta,z)}{\hat\Phi_{1}(z)^{\beta_1}\cdots\hat\Phi_{m-1}(z)^{\beta_{m-1}}}
				= \frac{z^\beta + \varphi_{0}(z ^\beta,z)}{\frac{z^\beta}{z_m^{\beta_m}} +  \hat\sigma(z)} &\\
				 = z_m^{\beta_m} \left( \frac{1 + \frac{\varphi_{0}(z^\beta,z)}{z^\beta}}{1 + z_m^{\beta_m}\frac{\hat\sigma(z)}{z^\beta}}\right) = z_m^{\beta_m}(1 + \hat\varrho(z)) &\nonumber
			\end{align}
			where $\hat\varrho$ is power series in $z^{\beta}$, with no constant terms and with coefficients holomorphic on a same polydisc.
			
			Let us check that $\hat\Phi_{m}$ satisfies the conjugacy equation:
			\[\hat\Phi_{m}(F(z)) = G_m(\hat\Phi(z)).\]
			By definition, we have $\hat\Phi_i(F(z))=G_i(\hat\Phi(z))$, $i=1,\ldots m-1$ so that
			\[\hat\Phi_{m}(F(z))^{\beta_m} = \frac{\hat\Phi_0(F_{0}(z),F(z))}{\prod_{i=1}^{m-1}G_i(\hat\Phi(z))^{\beta_i}}.\]
			We have, on the one hand, by \re{conjG0} 			
            \[\prod_{i=1}^mG_i(\hat\Phi(z))^{\beta_i} = G_0(\hat\Phi(z)^\beta) = G_0(\hat\Phi_0(z^\beta,z),z) = \hat\Phi_0(F_0(z),F(z)) \]
			On the other hand
			\[ \prod_{i=1}^m\hat\Phi_{i}(F(z))^{\beta_i} = \hat\Phi_0(F(z)^\beta,F(z)) = \hat\Phi_0(F_{0}(z),F(z)).\]
			Combining these two equalities, we get
			\[\prod_{i=1}^mG_i(\hat\Phi(z))^{\beta_i} =\prod_{i=1}^m\hat\Phi_{i}(F(z))^{\beta_i}\] 
            We already know that $G_i(\hat\Phi(z))=\hat\Phi_{i}(F(z))$ for all $i=1,\ldots,m-1$, so that
            \[G_m(\hat\Phi(z))^{\beta_m} = \hat\Phi_{m}(F(z))^{\beta_m}.\]
            Both sides are of the form $G_m(z)^{\beta_m}(1+\hat\varrho(z))$ with $\hat\varrho$ a formal power series in $z^\beta$ with no constant term, hence taking the $\beta_m$-th root which is tangent to the identity we get
\[G_m(\hat\Phi(z)) = \hat\Phi_{m}(F(z)).\]
		\textbf{Construction of $\hat\Phi_{m+1},\ldots,\hat\Phi_{n}$:} Let 
		us show that we can find $\hat\Phi_{i}(z) = z_i + \varphi_{i}(z)$ for all $i=m+1,\ldots,n$ such that
		\beq\label{equivi} \hat\Phi_{i}(F(z)) = G_i(\hat\Phi(z))\eeq
		with $G_i(z) = z_i(\lambda_i + Q_i(z^{ \beta}))$. 
		We have
		\begin{align*}
			\hat\Phi_{i}(F(z)) & = F_{i}(z)+\varphi_{i}(F(z)) \\
			& = G_i(z) + (z^{\beta})^{2k|\beta|+1}f_i(z) + \varphi_{i}(F(z)) \\
		G_i(\hat\Phi(z)) & = \hat\Phi_{i}(z)(\lambda_i+Q_i(\hat\Phi(z)^{\beta}))\\
			& = (z_i + \varphi_{i}(z))(\lambda_i+Q_i(\hat\Phi_0(z^\beta,z))) 
		\end{align*}
		since $\hat\Phi(z)^\beta = \hat\Phi_0(z^\beta,z)$. Hence, \re{equivi} is equivalent to
				\beq\label{equgenf}
			z_i(\lambda_i+Q_i(z^{\beta}))+(z^{\beta})^{2k|\beta|+1}f_i(z) + \varphi_{i}(F(z)) = (z_i + \varphi_{i}(z))(\lambda_i+Q_i(\hat\Phi_0(z^\beta,z))). 
		\eeq
		
		Since $\hat\Phi_0(z^{\beta},z)=z^{\beta}+ \varphi_0(z^{\beta},z)$, with $\varphi_0(z^{\beta},z)=(z^{\beta})^{2|\beta|k+2}\tilde\varphi_0(z^{\beta},z) $ formal w.r.t $z^{\beta}$, uniformly in $z$, then we can write
		\[ z_i(\lambda_i+Q_i(z^{\beta})) -  z_i(\lambda_i+Q_i(\hat\Phi_0(z^{\beta},z))) =: z_i \varepsilon_i(z^{\beta},z)=:z_i(z^{\beta})^{2|\beta|k+2}\tilde \varepsilon_i(z^{\beta},z)\]
		where $\varepsilon_i,\tilde \varepsilon_i $ are formal power series in $z^{\beta}$, with coefficients holomorphic on a same polydisc.
		Then equation \re{equivi} is equivalent to
		$$
		\varphi_i(z)(\lambda_i+Q_i(\hat\Phi_0(z^\beta,z)))= (z^{\beta})^{2k|\beta|+1}f_i(z)+z_i(z^{\beta})^{2k|\beta|+2}\tilde \varepsilon_i(z^\beta,z) + \varphi_{i}(F(z))
		$$
		This can be rewritten as
		\begin{align}\label{equisol}
		\lambda_i\varphi_i(z)-\varphi_i(\lambda z)&=(z^{\beta})^{2k|\beta|+1}f_i(z)+z_i(z^{\beta})^{2k|\beta|+2}\tilde \varepsilon_i(z^\beta,z)\\
		&-\varphi_i(z)Q_i(\hat\Phi_0(z^\beta,z))+(\varphi_{i}(F(z))-\varphi_i(\lambda z)).\nonumber
		\end{align}
		As above, we expand the right hand side w.r.t $z^{\beta}$ with coefficients in $z$ the Taylor expansions at $0$ of which do not contain terms divisible by $z^{\beta}$. Let $\mathcal{F}_l:=\{\re{equisol}\}_l$ be the coefficient of $(z^{\beta})^l$ in the RHS of \re{equisol}. It depends on $\varphi_{i,j}(z)$ with $j=2k|\beta|+1,\ldots, l-1$. By induction, there is a unique solution $\lambda_i\varphi_{i,l}(z)-\varphi_{i,l}(\lambda z)=\mathcal{F}_l(z)$, holomorphic in a polydisc at the origin and the Taylor expansion of which does not contain terms divisible by $z^{\beta}$. By shrinking slightly the domain as we did for $\varphi_{0,l}$, we have that all $\varphi_{i,l}$ are holomorphic and bounded on a same polydisc.
		
        Finally, we have proved that there are formal power series in $z_0$, with coefficients holomorphic in a same polydisc in $z$, denoted $\tilde \Phi_0(z_0,z), \tilde \Phi_1(z_0,z),\ldots, \tilde \Phi_n(z_0,z)$, such that the formal diffeomorphism $\hat\Phi(z):=(\hat\Phi_i(z))_{i=1,\ldots,n}$ with $\hat\Phi_i(z):=\tilde \Phi_i(z^{\beta},z)$, and $\hat\Phi(z)^{\beta}=\tilde \Phi_0(z^{\beta},z)$ solves the problem.
	\end{proof}	
	\begin{prop}
		The only formal diffeomorphism $\hat \Phi$ in $z^{\beta}$, tangent to identity at the origin and of the form $z\mapsto \left( z_i+\sum_{l \geq 0} \varphi_{i,l}(z)(z^{\beta})^l\right)_{i=1,\ldots, n}$ that commutes with $\hat G$ is the identity.
	\end{prop}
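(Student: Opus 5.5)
The plan is to reuse the bookkeeping of Lemma~\ref{phiholo}: expand everything in powers of $z_0=z^{\beta}$, with coefficients whose Taylor expansions contain no monomial divisible by $z^{\beta}$. Write $\hat\Phi=\mathrm{id}+\varphi$ with $\varphi_i=\sum_{l\ge 0}\varphi_{i,l}(z)(z^\beta)^l$, and assume $\hat\Phi\circ\hat G=\hat G\circ\hat\Phi$ with $\varphi\neq 0$. Let $l_0$ be the smallest index such that some $\varphi_{i,l_0}\neq 0$. Among such components I single out the lowest Taylor degree $d$ in $z$. Since $\hat\Phi$ is tangent to the identity, every nonzero term has total degree $\ge 2$. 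The aim is a contradiction from the lowest-order part of the commutation equation. The natural grading is the total $z$-degree refined by the power of $z^\beta$, and I work with the induced filtration.

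\textbf{Step 1: the homological equation at lowest order.} First treat the $0$-component $\hat\Phi_0:=\hat\Phi^\beta=z^\beta(1+\varphi_0)$. It commutes with $G_0(z_0)=z_0(1+Az_0^k+\dots)$ in the sense $\hat\Phi_0(\hat G)=G_0(\hat\Phi_0)$. At the lowest order, as in \eqref{coho0}, this gives $\varphi_{0,l}(z)-\varphi_{0,l}(\lambda z)=(\text{terms of lower filtration})$. So the lowest nonzero term $h$ of $\varphi_0$ satisfies $h(\lambda z)=h(z)$. By the $1$-resonance of $\lambda$, such an $h$ must be a function of $z^\beta$ only. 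Since $h$ has no monomial divisible by $z^\beta$, $h$ is constant, and the constant part of $\varphi_0$ vanishes because $\hat\Phi_0$ is tangent to $z^\beta$. Similarly, for $i=1,\dots,n$ the lowest-order part of the $i$-th equation is $\lambda_i h_i(z)-h_i(\lambda z)=0$. The monomials $z^Q$ with $\lambda^Q=\lambda_i$ and no factor $z^\beta$ reduce to $z_i$ itself. So the only surviving term is a multiple of $z_i(z^\beta)^{l_0}$ in the $\varphi_{i,l_0}$-coefficient, i.e.\ $h_i=c_i z_i$. This is the kernel term that the paper had to fix by normalization in the proof of Lemma~\ref{phiholo}.

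\textbf{Step 2: kill the kernel terms using $A\neq 0$.} I now go one filtration level higher, to $(z^\beta)^{l_0+k}$, as in the proof of Lemma~\ref{changement}. Consider the terms $c_i z_i (z^\beta)^{l_0}$. Substituting $\hat G$ and using \eqref{gbeta}, the difference $\hat\Phi\circ\hat G-\hat G\circ\hat\Phi$ produces at order $z_i(z^\beta)^{l_0+k}$ a coefficient of the form $\lambda_i c_i\bigl(l_0A + a_{i,1}\lambda_i^{-1}\bigr)-\lambda_i c_i\,a_{i,1}\lambda_i^{-1}(\dots)$. The $a_{i,1}$ contributions should cancel and leave $\lambda_i l_0 A c_i$ (for $l_0\ge1$). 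The $z^\beta$-component gives the relation $\sum_i\beta_i c_i/z_i\cdot z_i$, i.e.\ $\varphi_0$'s lowest term is $\sum_{i\le m}\beta_i c_i (z^\beta)^{l_0}$. This has already been shown to vanish by Step 1, or else gives $l_0 A\sum\beta_ic_i=0$. In either case $A\neq0$ forces $c_i=0$, contradicting the minimality of $l_0$. The case $l_0=0$ must be handled separately. Here $c_iz_i$ would be part of the linear part, so $c_i=0$ by tangency. Nonlinear terms with $l=0$ are non-resonant monomials and are killed in Step 1 directly.

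\textbf{Main obstacle.} The delicate point is Step 2. One has to show that the remaining terms at level $(z^\beta)^{l_0+k}$ in the $z_i$-direction genuinely come only from the $c_j$'s, with coefficient $l_0A\lambda_i c_i$ plus cross terms that cancel. In particular, the contributions of $Q_i(\hat\Phi^\beta)$ and of $a_i(\hat\Phi^\beta)^k$ in $\hat G\circ\hat\Phi$ must be matched against $\varphi_i(\hat G)$. I would first verify this by computing $\varphi_i(\hat G(z))-\varphi_i(\lambda z)$ exactly as in Lemma~\ref{changement}, where the factor $lA$ appeared. Then I would check, via the constraint $\hat\Phi^\beta=z^\beta(1+\sum\beta_jc_j(z^\beta)^{l_0}+\dots)$, that the $Q_i$ and $a_i$ terms cancel at this order.
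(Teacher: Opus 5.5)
\textbf{There is a genuine gap, and it cannot be closed, because the statement is false as written.}

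The argument first breaks at the end of Step 1. In your normalization $\hat\Phi^\beta=z^\beta(1+\varphi_0)$, tangency to the identity only kills the constant term of $\varphi_{0,0}$. At a level $l_0\geq 1$ the lowest term of $\varphi_0$ can be a nonzero constant $C$, i.e.\ $\hat\Phi^\beta=z^\beta+C(z^\beta)^{l_0+1}+\cdots$. Constants are exactly the kernel of $h\mapsto h-h\circ\lambda$, so the level-$l_0$ equation gives no information on them.

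Step 2 does not repair this, because the cancellation you hope for does not occur. In the situation Step 1 leads to, $\hat\Phi_i=z_i\bigl(1+c_i(z^\beta)^{l_0}+\cdots\bigr)$ with $C=\sum_{j\le m}\beta_jc_j$. Replacing $z^\beta$ by $\hat\Phi^\beta$ in $a_i(z^\beta)^k$ (resp.\ in $Q_i(z^\beta)$) produces an extra term $k\,a_{i,1}C\,z_i(z^\beta)^{l_0+k}$. The coefficient of $z_i(z^\beta)^{l_0+k}$ in $\hat\Phi\circ\hat G-\hat G\circ\hat\Phi$ then gives
\[
\lambda_i\, l_0 A\, c_i = k\, a_{i,1}\, C .
\]
Dividing by $\lambda_i$, multiplying by $\beta_i$ and summing over $i\le m$ yields only $(l_0-k)AC=0$. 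So $A\neq0$ forces $C=0$, and then all $c_i=0$, only when $l_0\neq k$. For $l_0=k$ the constant $C$ is free.

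This freedom is real. Let $L=\diag(\lambda_1,\dots,\lambda_n)$ and $T=L^{-1}\circ\hat G$, which is tangent to the identity. Since $\lambda^\beta=1$, $L$ commutes with $T$. Hence the formal logarithm $Y$ of $T$ (the unique formal vector field of order $\geq2$ with $\exp Y=T$) is $L$-invariant. Therefore $\exp(tY)$ commutes with $L$ and with $T$, hence with $\hat G$, for every $t\in\C$. Since it commutes with $L$ and is tangent to the identity, $1$-resonance forces its components to be $z_i\bigl(1+t\frac{a_{i,1}}{\lambda_i}(z^\beta)^k+\cdots\bigr)$, which is of the required form. Moreover $\exp(tY)^\beta=z^\beta+tA(z^\beta)^{k+1}+\cdots\neq z^\beta$ for $t\neq0$. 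This is exactly the case $l_0=k$, $C=tA$, $c_i=ta_{i,1}/\lambda_i$ above.

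The paper's own proof fails at the same place. It concludes $\hat\Phi_0=z^\beta$ from the equations $\varphi_{0,l}-\varphi_{0,l}\circ\lambda=h_l$, overlooking that the constant parts of $\varphi_{0,l}$, $l\geq2$, lie in the kernel. Its subsequent computation $l_0Aa_{i,l_0}=0$, which is essentially your Step 2, is correct only once $\hat\Phi^\beta=z^\beta$ is known.

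What can be proved is uniqueness modulo the one-parameter group $\{\exp(tY)\}_{t\in\C}$. For instance, add the normalization that the coefficient of $(z^\beta)^{k+1}$ in $\hat\Phi^\beta$ vanishes. Then $C=0$ in every case, and the displayed relation kills the $c_i$.
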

	\begin{proof}
		Recall $G=\hat G$. We follow the previous proof with $F= G$. Let us first consider \re{h0}. We solve by induction on $l=0,\ldots,...$, the degree of $z_0$. The 0-degree term of \re{util40term} is 
		$$
		\varphi_{0,0}(z)+ A\varphi_{0,0}(z)^{k+1}+ B\varphi_{0,0}(z)^{2k+1}+\sum_{j=3}^{2|\beta|}c_j\varphi_{0,0}(z)^{kj}
		$$
		We have 
		\begin{align}
			\varphi_0(G_0(z_0,z),G(z_0,z))-\varphi_0(z_0,\lambda  z)= \sum_{l\geq 0}\underbrace{\left(\varphi_{0,l}(G(z_0,z))-\varphi_{0,l}(\lambda z)\right)G_0(z_0,z)^l}_{(D\varphi_{0,l}(\lambda z)).(zz_0^k+\cdots)(z_0+\cdots)^l}&\nonumber\\+\varphi_{0,l}(\lambda z)\underbrace{(G_0(z_0,z)^l-z_0^l)}_{z_0^{l-1}.(Az_0^{k+1}+\cdots)}&\label{phiG}
		\end{align}
		We recall $\{h(z)\}$ denotes the part of the Taylor expansion of $h(z)$ at the origin containing only terms not divisible by $z^{\beta}$.
		Hence, the degree $0$-equation \re{h0} reads 
		$$
		\varphi_{0,0}( z) - \varphi_{0,0}(\lambda z) = h_0(z)=:\{ A\varphi_{0,0}(z)^{k+1}+ B\varphi_{0,0}(z)^{2k+1}+\sum_{j=3}^{2|\beta|}c_j\varphi_{0,0}(z)^{kj}\}_0.
		$$
		Since the order at the origin is $\geq 2$ and since $\varphi_{0,0}$ does not contains terms divisible by $z^{\beta}$, it does not contain resonant terms, assume that $\varphi_{0,0}$ has non vanishing term $\varphi_{0,0, Q}$ of least degree $d=|Q|$, then $(1-\lambda^Q)\varphi_{0,0,Q}= h_{0,Q}$. Since $h_0$ has order $\geq (k+1)d$, $h_{0,Q}=0$ and $(1-\lambda^Q)\neq 0$, which is a contradiction. So  $\varphi_{0,0}=0$.
		For $l=1$, according to \re{phiG}, we have $h_1=0$. Hence, the (non-resonant, by assumption) solution of  $\varphi_{0,1}( z) - \varphi_{0,1}(\lambda z) = h_1(z)$ is $\varphi_{0,1}=0$. We proceed by induction on $l$ and find $h_l=0$, for $l\geq 2$ so that the (non-resonant, by assumption) solution of  $\varphi_{0,l}( z) - \varphi_{0,l}(\lambda z) = h_l(z)$ is $\varphi_{0,l}=0$. Hence, $\hat\Phi_0(z)=z^{\beta}$. Let us compute $\hat\Phi_i(z)=z_i+\varphi_i(z)$, $i=1,\ldots, m$. According to \re{equif}, we have 
		$$
		\varphi_i(\lambda z)-\lambda_i\varphi_i(z)=\varphi_{i}(z)\big(a_i(z^\beta)^k + b_i (z^\beta)^{2k}\big)+(\varphi_{i}(\lambda z)-\varphi_{i}(G(z)))=:h_i(z).
		$$
		Furthermore, we have 
		$$
		-(\varphi_{i}(\lambda z)-\varphi_{i}(G(z)))=\sum_{l\geq 0}\underbrace{\left(\varphi_{i,l}(G(z))-\varphi_{i,l}(\lambda z)\right)(z^{\beta})^l}_{(D\varphi_{i,l}(\lambda z)).(z(z^{\beta})^k+\cdots)(z^{\beta})^l}.
		$$
		Hence, by induction on  $l\geq 0$, we have $h_{i,l}=0$ so that the (non-resonant, by assumption) solution of  $\varphi_{i,l}(\lambda z)-\lambda_i\varphi_{i,l}(z) = h_{i,l}(z)$ is either $\varphi_{i,l}=0$ or of the form $\varphi_{i,l}=a_{i,l}z_i$ for some $a_{i,l}\in \C$ (since this solution is in the kernel of $\varphi_{i,l} \mapsto \lambda_i \varphi_{i,l}(z) -\varphi_{i,l}(\lambda z)$). We shall prove that the only possible solution is actually $\varphi_{i,l}=0$.  
        By contradiction, assume that there exists $a_{i,l}\in \C$ such that $\varphi_{i,l}(z)=a_{i,l}z_i$ is a solution, for some $1 \leq i \leq m$. Then $\hat \Phi_i(z) = z_i + \sum_l \varphi_{i,l}(z)(z^\beta)^l = z_i(1+ \sum_l a_{i,l}(z^\beta)^l)$.
    
        Recall that $ G_i(z) = z_i(\lambda_i + a_i (z^\beta)^k + b_i (z^\beta)^{2k})$ and $G^\beta(z) = G_0(z^\beta)$. Then equation $\hat \Phi \circ G = G \circ \hat \Phi$ gives for $1 \leq i \leq m$, for the left side:
        \[ (\hat \Phi \circ G)_i(z) = G_i(z)\big(1+ \sum_{l}a_{i,l}(G^{\beta}(z))^l\big) = z_i(\lambda_i + a_i(z^\beta)^k + b_i(z^\beta)^{2k})\big(1+ \sum_{l}a_{i,l}G_0(z^\beta)^l\big)\]
        and for the right side (since $\hat \Phi^\beta(z) = z^\beta$):
        \[ (G \circ \hat \Phi)_i(z) = \hat \Phi_i(z)(\lambda_i + a_i(\hat \Phi^\beta(z))^k + b_i(\hat \Phi(z)^\beta)^{2k}) = z_i(1 + \sum_l a_{i,l}(z^\beta)^l)(\lambda_i + a_i (z^\beta)^k + b_i(z^\beta)^{2k}).\]
        Dividing both sides by $z_i(\lambda_i + a_i (z^\beta)^k + b_i(z^\beta)^{2k})$ and simplifying, we get
        \[ \sum_{l}a_{i,l}G_0(z^\beta)^l = \sum_l a_{i,l}(z^\beta)^l.\]
        Let $l_0$ such that $a_{i,l_0}$ is the first $\neq 0$ coefficient. Then, since $G_0(z^\beta)= z^\beta + A(z^\beta)^{k+1}+ ...$, the coefficient of $(z^\beta)^{l_0+k}$ in $\sum_{l}a_{i,l}G_0(z^\beta)^l$ is $a_{i,l_0}l_0A + a_{i,l_0+k}$ and the coefficient of $(z^\beta)^{l_0+k}$ in $\sum_l a_{i,l}(z^\beta)^l$ is $a_{i,l_0+k}$ which implies
        \[a_{i,l_0}l_0A + a_{i,l_0+k} = a_{i,l_0+k} \]
       and then $a_{i,l_0} =0$ which is a contradiction. Hence $a_{i,l}=0$ for all $l$, and $\varphi_{i,l}=0$, so that $\hat \Phi_i(z)=z_i$ for $1 \leq i \leq m$.

		Hence, $\hat\Phi_0(z)=z^{\beta}$ and $\hat\Phi_i(z)=z_i$, $i=1,\dots , m$. 
		For $i\geq m+1$, we use \eqref{equisol} which reads
		$$
		\lambda_i\varphi_i(z)-\varphi_i(\lambda z)=-\varphi_i(z)Q_i(z^\beta)+(\varphi_{i}(G(z))-\varphi_i(\lambda z))=:h_i(z)
		$$
		As above, we prove by induction on  $l\geq 0$ that we have $h_{i,l}=0$, $i\geq m+1$, so that the (non-resonant, by assumption) solution of $\lambda_i\varphi_{i,l}( z)-\varphi_{i,l}( \lambda z) = h_{i,l}(z)$ is either $\varphi_{i,l}=0$ or $\varphi_{i,l}=a_{i,l}z_i$ for some $a_{i,l}\in \C$. If one of the $a_{i,l}$ is not zero then the right-hand side of the previous equation yields a nonzero term of the form $z_i(z^{\beta})^{l+k}$ which cannot be obtained as $(\lambda_i-\lambda^Q)\varphi_{i,Q}z^Q$. Hence $a_{i,l}=0$ for all $l$, and $\hat \Phi_i(z)=z_i$ for all $m+1 \leq i \leq n$.
		Finally, we have proved that the formal centralizer of $\hat G(z)$ is reduced to the identity.
	\end{proof}
		We now apply Borel-Ritt theorem \cite{borel-ritt-ramis} w.r.t. $z_0$ with parameter $z$. Given a sector $S$ at $0$ in $z_0$, there exists $\Phi_{S}$ holomorphic on $S \times \D_{\rho_\infty}^{n}$, such that $\Phi_{S}=(\Phi_{S,0}(z_0,z),\Phi_{S,1}(z_0,z),\ldots,\Phi_{S,n}(z_0,z))$ is asymptotic to $(\hat\Phi_0(z_0,z),\hat \Phi_1(z_0,z),\ldots,\hat\Phi_n(z_0,z))$ in $z_0$ uniformly on $ z \in \D_{\rho_\infty}^n$. Denoting $\Phi_S = id +  \varphi_S$ with $ \varphi_S = ( \varphi_{S,0}, ...,  \varphi_{S,n})$, for any $\tilde S $ sub-sector of $S$, there exist constants $M,C>0$ such that, for all $N \geq 2k\vert \beta \vert+1$ and for all $i=0,...,n$,
		\[\vert \varphi_{S,i}(z_0, z) - \sum_{l= 2|\beta|k+1}^N  \varphi_{i,l}(z)z_0^l \vert \leq M C^{N+1}\vert z_0\vert ^{N+1}, \quad \forall (z_0,z) \in \tilde S\times \D_{\rho_\infty}^n.\]
        
		Then $\Phi_S \circ (F_0(z_0,z),F(z_0,z)) =  (G_{\ast,0},G_{\ast}) \circ \Phi_S$ with $ G_{\ast} = G(z_0,z) + K$ (resp. $ G_{\ast,0} := G_{\ast}^{\beta}= G(z_0,z)^{\beta} +  K_0$) where $K=(K_1,...,K_n)$ and $ K_0$ are flat with respect to $z_0$ uniformly on $z$, that is to say, for all $i=0,...,n$
		\[ K_i(z_0,  z) = \OO_{z_0 \in \tilde S, z_0 \rightarrow 0}(\vert z_0 \vert^N), \quad \forall N \in \N\]
		uniformly for all $ z \in \D_{\rho_\infty}^n$.
		
		By restricting $z_0$ to $z^{\beta}$, $\hat \Phi_S(z^{\beta},z)$ is holomorphic in $\{(z^{\beta},z)\in S \times \D_{\rho_\infty}^{n}\}$ and asymptotic to $\hat\Phi(z^{\beta},z)$ there. Finally, it conjugates $F$ to 
		 $ G_{\ast}(z) = G(z^{\beta},z) + K(z^{\beta},z)$ where 
		$K(z^{\beta},z)$ is flat w.r.t. $z^{\beta}$ in $S$ uniformly in $z\in \D_{\rho_\infty}^{n}$.

        \begin{rem}
        		In general, polynomials $Q_i$, $i=m+1,\ldots,n$ of a normal form $\hat G$ might be of order $<k$.
        \end{rem}

 \begin{remark}\label{dilation}
        Up to the dilation $u\mapsto \delta u$ with $\delta^k= -\frac{1}{kA}$, which replaces $A$ by $A \delta^k$ and each $a_{i,1}$ by $a_{i,1}\delta^k$, the quantities $\frac{a_{i,1}}{A}$, and then the numbers $\nu_i$ of \eqref{H_1} are unchanged. \textbf{We therefore assume from now that $A=-\frac{1}{k}$}.
    \end{remark}
\section{Sectorial normalization}

In what follows, we will assume that the orders of the polynomials $Q_i$, $i=m+1,\ldots,n$ of a normal form $\hat G$ are equal to $k$ so that \eqref{H_0} is satisfied.

Let us consider the map $\pi:\C^n\rightarrow \C$ defined by $\pi(x)=x^{\beta}$ where $\beta$ is as above.
\subsection{Asymptotic expansion w.r.t a monomial}
We follow and recall concepts and results of \cite[Chapter 2, Section 4]{Ram-Mart2}.

Let $\mathcal{B}$ be the sheaf of rings over the unit circle $S^1$ defined as follows: for each direction $d\in S^1$, an element of  $\mathcal{B}_d$ is a germ at $0\in \C\times\C^n$ of a function $f:V_d(\theta,r)\times \overline{\D_\rho^n}\rightarrow \C$ where $V_d$ is a closed sector at $0$ around direction $d$, of opening $2\theta$ and of radius $r$, $\ov{\D_\rho^n}$ is a closed polydisc at the origin of radius $\rho$. $f$ is analytic in the interior of $V_d(\theta,r)\times \overline{\D_\rho^n}$ and Whitney-smooth on the closed set $V_d(\theta,r)\times \overline{\D_\rho^n}$. Then the infinite jet of $f$ along $\{0\}\times \C^n$ is a formal power series $\hat f=\sum_{j\geq 0}f_j(x)u^j$ where all functions $f_j(x)$ are defined in the same polydisc of $\C^n$. The set of these infinite jets is denoted $\hat{\mathcal{B}}$. Let $\mathcal{B}^{\infty}$ be the subsheaf of $\mathcal{B}$ whose elements are flat along  $\{0\}\times \C^n$, that is with a zero infinite jet along $\{0\}\times \C^n$.

On the other hand, let $B_{\pi}$ be the sheaf of rings over the unit circle $S^1$ defined as follows: for each direction $d\in S^1$, an element of  ${B}_{\pi,d}$ is a germ at $0\in \C^n$ of a function $f:\tilde V_d(\theta,r)\cap \overline{\D_\rho^n}\rightarrow \C$ where $\tilde V_d(\theta,r)=\pi^{-1}(V_d(\theta,r))$ with $V_d(\theta,r)$ a closed sector at $0$ around direction $d$, of opening $2\theta$ and of radius $r$, $\ov{\D_\rho^n}$ is a closed polydisc at the origin of radius $\rho$. $f$ is analytic in the interior of $\tilde V_d(\theta,r)\cap \overline{\D_\rho^n}$ and Whitney-smooth on the closed set $\tilde V_d(\theta,r)\cap \overline{\D_\rho^n}$. 
Let 
\begin{equation} \label{defE} E=\{x\in\C^n,\,x^{\beta}=0\}=\pi^{-1}(0).\end{equation} The set of infinite jets of such a $f$ along $E\cap \overline{\D_\rho^n}$ is denoted by $\hat B$. The latter is the formal completion of $\C\{x\}$ along $E$. $B_{\pi}^{\infty}$ denotes the sheaf of flat germs along $E\cap \overline{\D_\rho^n}$, these are elements of $B_{\pi}$, the infinite jet of which along $E\cap \overline{\D_\rho^n}$ is zero.

Let us define 
$$
\Sigma:=\{(u,x)\in \C^{n+1},\,u=x^{\beta}\}.
$$
\begin{lemma}[{\cite[Lemma 4.7]{Ram-Mart2}}]\label{lemRM}
Let $f$ be a flat section of $B_{\pi}$ over a closed sector $V$, considered as defined on a neighborhood of $0$ over $\Sigma\cap (V\times \C^n)$. Then $f$ admits a flat extension $\tilde f$ in $\mathcal{B}$ over $V$.
\end{lemma}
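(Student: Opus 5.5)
The plan is to reduce the extension problem to a flat extension in the single variable $u$ with $x$ as a parameter. We do this by exploiting the monomial structure of $\pi$ and the fact that flatness along $E$ means decay faster than any power of $|x^\beta|$.

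First we make this decay precise. Write $V=V_d(\theta,r)$ and let $f$ be defined on $\tilde V\cap\overline{\D_\rho^n}$, where $\tilde V=\pi^{-1}(V)$, Whitney-smooth and flat along $E$. The first step is to check that flatness gives, for every $N$ and every multi-index $\gamma$ of derivatives in $x$ and $\bar x$,
\[|\partial^\gamma f(x)|\le C_{N,\gamma}|x^\beta|^N\]
on a slightly smaller closed set. This should follow from Taylor's formula in the normal directions to the components $\{x_i=0\}$, $i\le m$, of $E$, combined with the fact that $|x^\beta|\le \rho^{|\beta|-\beta_i}|x_i|^{\beta_i}$.

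Next we would extend in the $u$ direction. Off $\Sigma$, the natural object is the function $(u,x)\mapsto f(x)$ on $\Sigma$, and we need a function $\tilde f(u,x)$ on $V\times\overline{\D_\rho^n}$, flat along $u=0$, with $\tilde f(x^\beta,x)=f(x)$. To build it, choose a smooth cutoff $\chi(u,x)$ supported in the region $|u-x^\beta|\le \tfrac12|x^\beta|$ and equal to $1$ near $\Sigma$. In that region, define the extension by a Whitney extension procedure adapted to the parameter. More concretely, use a local holomorphic retraction onto $\Sigma$: on the region where $x_m\ne0$ (the region $|x^\beta|\sim|u|$ forces $x_i\neq0$ for $i\le m$), set
\[x_m\mapsto x_m\,(u/x^\beta)^{1/\beta_m},\]
which sends $(u,x)$ to a point $\Psi(u,x)$ of $\pi^{-1}(u)$. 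Then take
\[\tilde f(u,x)=\chi(u,x)\, f(\Psi(u,x)).\]
The map $\Psi$ is analytic where $|u-x^\beta|<\tfrac12|x^\beta|$. Its derivatives grow at most like negative powers of $|x^\beta|$, hence of $|u|$, and these are absorbed by the decay $|u|^N$ coming from the first step. This gives flatness of all derivatives of $\tilde f$ as $u\to0$, uniformly in $x$, and shows that $\tilde f$ is a section of $\mathcal B^\infty$ over $V$ (Whitney-smooth, with zero jet).

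The main obstacle is that the statement places $\tilde f$ in $\mathcal B$, which requires analyticity in the interior, whereas the cutoff construction is only smooth. If holomorphy is genuinely required, a further step is needed. I would first try to avoid the cutoff entirely by using the retraction $\Psi$ globally on a slightly smaller sector, restricting $x$ so that $u/x^\beta$ stays in a fixed compact set away from $0$. Failing that, I would correct the smooth extension by solving a $\bar\partial$-problem with flat data, following Ramis--Martinet. The remaining care points are checking uniformity of the estimates near $E$, and ensuring that the points $\Psi(u,x)$ remain in $\tilde V\cap\overline{\D_\rho^n}$ after shrinking $\rho$ and the opening of the sector.
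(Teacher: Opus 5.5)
The paper gives no proof of this lemma; it is quoted from Martinet--Ramis. Judged on its own, your proposal has a genuine gap, and it sits exactly at your ``main obstacle''. An element of $\B$ must be holomorphic in the interior of the \emph{product} $V\times\overline{\D^n_\rho}$. This is used essentially later: the extensions $\tilde K_0,\tilde K_i$ enter the holomorphic series of Proposition~\ref{Propositionconjug} and the fixed-point argument in the spaces $E_s$ of holomorphic functions. Your $\chi\cdot(f\circ\Psi)$ is only a $C^\infty$ flat extension, and neither proposed remedy closes the gap.

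The first remedy cannot work. For fixed $u\neq0$, $x$ must range over a whole polydisc, which contains points with $x^\beta$ arbitrarily small or zero (e.g.\ $x_1=0$). There $u/x^\beta$ is unbounded or undefined, so $\Psi(u,x)$ leaves the polydisc or does not exist. No restriction keeping $u/x^\beta$ in a compact set yields a domain of the form $V'\times\D^n_{\rho'}$.

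The second remedy is only named, yet it carries the whole content of the lemma. You would have to do two things.
(i) Preserve the trace on $\Sigma$, by looking for $\tilde f=\tilde f_0-(u-x^\beta)g$ with $\bar\partial g=\bar\partial\tilde f_0/(u-x^\beta)$. This is admissible: $\bar\partial\tilde f_0=0$ where $\chi\equiv1$, and if you take $\chi\equiv1$ on $\{|u-x^\beta|\le c|x^\beta|\}$, then $|u-x^\beta|\gtrsim|u|$ on the support of $\bar\partial\chi$. So the right-hand side is a flat, $\bar\partial$-closed form.
(ii) Solve this $\bar\partial$-problem on the product with $g$ flat in $u$ uniformly in $x$. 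One way is Cauchy integrals in the $x_j$ with $u$ as a parameter, then in $u$. The Cauchy--Pompeiu solution in $u$ is in general not flat, so you must subtract a Borel--Ritt realisation on the sector (holomorphic in $x$) of its jet along $u=0$.

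A route that avoids $\bar\partial$ altogether is the sectorial analogue of the decomposition $f=\sum_l f_l(z)(z^\beta)^l$ used in Lemma~\ref{phiholo}.
\begin{itemize}
\item Taylor-expand $f$ in $(x_{m+1},\dots,x_n)$, and Fourier-expand it with respect to the torus $\{t\in(S^1)^m:\ t^\beta=1\}$. This torus acts on $(x_1,\dots,x_m)$ and preserves the fibres of $\pi$.
\item Each component is then $x^Qg_Q(x^\beta)$ with $Q\in\N^n$, $x^Q\notin(x^\beta)$, and $g_Q$ holomorphic on the sector.
\item Evaluate at a point of $\pi^{-1}(u)$ where $|x_j|=\rho_1<\min(1,\rho)$ for all $j$ except one index $j_0\le m$ with $q_{j_0}<\beta_{j_0}$. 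For $|u|$ small this gives $|g_Q(u)|\le\rho_1^{-|Q|}|u|^{-1}\sup_{\pi^{-1}(u)\cap\overline{\D^n_\rho}}|f|$.
\item Hence $\tilde f(u,x):=\sum_Q x^Qg_Q(u)$ converges on $V\times\D^n_{\rho_2}$ for $\rho_2<\rho_1$. It is holomorphic, it is flat in $u$ uniformly in $x$ by your first step, and it equals $f$ on $\Sigma$.
\end{itemize}

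There is also a minor slip in your first step. The inequality you quote bounds $|x_i|$ from below. That is what you need for the derivatives of $\Psi$, but not for the decay of $f$. The decay comes from $|f(x)|\le C_N\min_{i\le m}|x_i|^N$, obtained by Taylor's formula along $s\mapsto(x_1,\dots,sx_i,\dots,x_n)$, $s\in[0,1]$, which stays in $\pi^{-1}(V)$. Combine this with $\min_{i\le m}|x_i|^{|\beta|}\le|x^\beta|$.
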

In other words, if $f$ is flat w.r.t. $x^{\beta}$ in some sector $S$, uniformly in $x$ in a polydisc, then there exists a function $\tilde f(u,x)$ flat w.r.t $u$ in $S$, uniformly in $x$ in a polydisc, whose restriction to $\Sigma$ is $f$.

\subsection{Flat perturbations of 1-resonant normal form}
Let $k\geq 1$, $\lambda = (\lambda_1,...,\lambda_n)$ $1$-resonant with respect to $\beta$ as defined in \ref{1-res}, and recall that $\vert \lambda_i \vert = 1$ for all $i=1,...,n$. 

\begin{equation}\label{casgeneral}
			G_i(z)=\left\{
			\begin{array}{l}
				 z_i(\lambda_i+a_i z^{k \beta} + b_i  z^{2k\beta }), \quad i =1,...,m \\
				z_i(\lambda_i + P_i(z^{ \beta})) \quad i = m+1,...,n
			\end{array}
			\right.
		\end{equation}
 where
 \[ z^{k\beta} = z_1^{k\beta_1}...z_m^{k\beta_m},\]
and with $\ord_0 P_i =k$ so that condition \eqref{H_0} is satisfied.

\textbf{Monomial coordinate:} We have
 \begin{align*} 
G_1(z)^{\beta_1}\cdots G_m(z)^{\beta_m} &= \prod_{i=1}^m \big(z_i(\lambda_i+a_i z^{k \beta} + b_i  z^{2k\beta }) \big)^{\beta_i} \\
 & = z^\beta \prod_{i=1}^m \lambda_i^{\beta_i} \prod_{i=1}^m \big(1+ \frac{a_i}{\lambda_i} z^{k \beta} + \frac{b_i }{\lambda_i} z^{2k\beta } \big)^{\beta_i}.
 \end{align*}
Setting $u=z^\beta$, let us define
 \beq \label{defG0u}
G_0(u) := u \prod_{i=1}^m \big(1+ \frac{a_i}{\lambda_i} u^{k} + \frac{b_i }{\lambda_i} u^{2k} \big)^{\beta_i} = u(1+A u^k + Bu^{2k}) + uP(u^k)
\eeq
where $P(u^k) = \sum_{j=3}^{2\vert \beta \vert}c_ju^{kj}$ contains the other terms of the product, and $A= \sum_{j=1}^{m} \frac{\beta_j a_j}{\lambda_j}$.

Let $\alpha$ and $R$ define a sector at infinity $\Delta_+(\alpha,R)$. The change of variable $U=u^{-k}$ sends the following $k$ bounded sectors at the origin, centered on the directions $e^{2i\pi \frac{k_0}{k}}$, with $k_0 = 0,...,k-1$, into $\Delta_+(\alpha,R)$, where $\alpha$ and $\epsilon$ are as in Theorem \ref{main-sect}:
\begin{equation}\label{delta}
\delta_+^{[k_0]}(\alpha,r):=\left\{u\in \C, |\arg u- \frac{2\pi k_0}{k}|<\frac{\alpha}{k}-\epsilon,\;|u|<r\right\}.
\end{equation}
Notice that $\Delta_+(\alpha,R)$ is a sector centered at $R$, so that $U \in \Delta_+(\alpha,R)$ gives a condition on $\arg(U-R)$, while $\delta_+^{[k_0]}(\alpha,r)$ is centered at $0$ so implies a condition on $\arg U$. The correction $-\epsilon$ ensures that $u \in \delta_+^{[k_0]}(\alpha,r)$ does not give a $U$ outside $\Delta_+(\alpha,R)$. Indeed, if $r$ is small enough so that $r^k \leq \frac{\sin (k\epsilon)}{R}$, then $u \in \delta_+^{[k_0]}(\alpha,r)$ satisfies $\vert \arg U \vert < \alpha - k\epsilon$, and $\vert U \vert \geq r^{-k} \geq \frac{R}{\sin (k\epsilon)}$, so that $\vert \arg (U-R)-\arg U\vert \leq \arcsin \left( \frac{R}{\vert U \vert} \right) \leq k \epsilon$, hence $\vert \arg (U-R)\vert < \alpha$. 

$\delta_-^{[k_0]}(\alpha,r)$ is defined in the same way, centered on the directions $e^{i \pi \frac{2 k_0 +1}{k} }$, so that $U= u^{-k}$ is sent into $\Delta_-(\alpha,R)$.

After having applied a fibered Borel-Ritt lemma w.r.t. monomial $z^{\beta}$, we can consider a flat perturbation $G_*=G+K$ with respect to $u=z^\beta$ over some sector at the origin $\delta_+^{[k_0]}(\alpha,r)$, uniformly in $z=(z_1,..., z_n)$ in $\bar \D_\rho^{n}$. Hence, we consider $K = (K_1,...,K_n)\in (B_{\pi}^{\infty})^n$. Let $\tilde K$ be an extension obtained from \rl{lemRM}.  Let us define $G_{\ast,0}(z) := \prod_{i=1}^{m}(G_{\ast,i}(z))^{\beta_i} = \prod_{i=1}^{m}(G_i(z)+K_i(z))^{\beta_i}$ as well as $ K_0(z) := G_{\ast,0}(z)-G_0(z^{\beta})$.

By construction, $K_0$ is flat with respect to $z^\beta$ uniformly in $z$. Let $\tilde G_{\ast,i} := G_i + \tilde K_i$ for $i=1,...,n$ and define $\tilde G_{\ast,0} := \tilde G_\ast^{\beta} = G_0(u)+\tilde K_0$ where $\tilde K_0 \in \B^\infty$ is flat in $u$ uniformly in $z$ and is an extension of $K_0$. 
 
Let, for all $(u,z) \in  \delta_{+}^{[k_0]}(\alpha,r) \times\D_\rho^n$ 
    \[\GG(u,z) := (\GG_0(u,z),\GG_1(u,z)):=(\tilde G_{\ast,0}(u,z),\tilde G_\ast (u,z))\]
with $\tilde G_\ast = G + \tilde K$ and $\tilde G_{\ast,0} := \tilde G_\ast^{\beta}$ which can be written $\tilde G_{\ast,0}= G_0 + \tilde K_0$, where $\tilde K_0 \in \B^\infty$ and $\tilde K = (\tilde K_1,..., \tilde K_n) \in (\B^\infty)^n$ (that is to say, each $\tilde K_i$ is flat in $u$ uniformly in $z$). Letting for all $i=1,...,n$

        \begin{equation}\label{defLambda}
			\Lambda_i(u,z)= \lambda_i+a_{i,1}u^{k} + \varepsilon_i(u,z)
		\end{equation}
(with $a_{i,1}:=a_i$ for $i=1,...,m$, and $a_{i,1}$ is the coefficient of $u^k$ in $P_i$ for $i=m+1,...,n$ as we supposed that $P_i$ has order $k$), with $\varepsilon_i(u,z) = \OO(u^{k+1})$, so that
\[ \tilde G_{\ast,i}(u,z) = z_i \Lambda_i(u,z) + \tilde K_i(u,z).\]

For $p\geq0$, we denote the iterates of $\GG$:
\beq \label{defproj}
\GG^{(p)}(u,z) = (\GG^{(p)}_0,\GG^{(p)}_1)=:(u_p,z_p)
\eeq
We also denote
	\[\tilde G_{\ast,0}^{(p)} := (\GG^{(p)})_0, \qquad \tilde G_{\ast}^{(p)} := (\GG^{(p)})_1.\]

\subsection{Stability of the sector under iteration}

Recall condition \eqref{H_1}:
\[    \nu_i  := \RE(a_{i,1}(\lambda_i A)^{-1}) >0, \quad \forall i = 1,...,n. \]
\begin{lemma}\label{stability} Under assumptions \eqref{H_0} (with $P_i$ here for $Q_i$) and \eqref{H_1}, for $R$ large enough and $\frac{\pi}{2}<\alpha < \frac{3\pi}{4}$, there exists $0<\rho_0<\rho$ such that 
in the variable $U = u^{-k}$ (that is, for $u$ such that $u^{-k}\in \Delta_+(\alpha,R)$), for all $(U,z) \in \Delta_+(\alpha,R)\times \D_{\rho_0}^n$ and all $l\geq0$,
\[ \GG^{(l)}(U,z) \in \Delta_+(\alpha,R) \times \D_{\rho}^n.\]
In particular, by \eqref{delta}, this holds for all $(u,z) \in \delta_+^{[k_0]}(\alpha,r)\times \D_{\rho_0}^n$.
\end{lemma}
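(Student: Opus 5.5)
We prove the two invariances separately and by induction on $l$. We first derive a one-step estimate in the variable $U=u^{-k}$. Since $G_0(u)=u(1+Au^k+O(u^{2k}))$ with $A=-\frac1k$, and $\tilde K_0$ is flat in $u$ uniformly in $z\in\D^n_\rho$, we get for $u_1=\tilde G_{\ast,0}(u,z)$
\[
U_1:=u_1^{-k}=U\bigl(1+Au^k+O(u^{2k})\bigr)^{-k}=U+1+O(U^{-1}),
\]
uniformly for $z\in\D^n_\rho$ and $U\in\Delta_+(\alpha,R)$, $R$ large. The flat term is $O(|U|^{-N})$ for every $N$, so it is absorbed in the $O(U^{-1})$. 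Thus $U_1=U+1+\eta(U,z)$ with $|\eta|\le C/|U|$.

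\textbf{Invariance of $\Delta_+(\alpha,R)$.} We show that $U_1\in\Delta_+(\alpha,R)$ whenever $U\in\Delta_+(\alpha,R)$ and $z\in\D_\rho^n$. Write $U=R+te^{i\theta}$ with $|\theta|<\alpha$. Translation by $1$ maps $\Delta_+(\alpha,R)$ into itself, and a point at distance $d$ from the boundary is moved a distance $\ge d+\sin(\pi-\alpha)$ away from it, since $\alpha>\pi/2$. The perturbation is only $C/|U|\le C/R$, so for $R$ large it is smaller than $\sin\alpha$ (note $\alpha<3\pi/4$ keeps $\sin\alpha$ bounded away from $0$) and $U_1\in\Delta_+(\alpha,R)$. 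As a byproduct we get $\Re U_l\ge \Re U+l/2$. More precisely, $|U_l|\ge c(|U|+l)$ for some $c>0$ depending on $\alpha$.

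\textbf{Invariance in $z$.} This is the main step, and it uses \eqref{H_1}. By \eqref{defLambda}, $z_{l+1,i}=z_{l,i}\Lambda_i(u_l,z_l)+\tilde K_i(u_l,z_l)$. Using $|\lambda_i|=1$ and $u_l^k=U_l^{-1}$,
\[
|\Lambda_i|=\Bigl|1+\frac{a_{i,1}}{\lambda_i}U_l^{-1}+O(|U_l|^{-1-1/k})\Bigr|\le 1+\Re\Bigl(\frac{a_{i,1}}{\lambda_i}U_l^{-1}\Bigr)+O(|U_l|^{-1-1/k}).
\]
Since $A=-1/k$, we have $\frac{a_{i,1}}{\lambda_i}=-\frac1k\,\frac{a_{i,1}}{\lambda_iA}$. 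Moreover $U_l\approx U+l$, so $\Re(\frac{a_{i,1}}{\lambda_i}U_l^{-1})\approx -\frac{1}{k}\Re\bigl(\frac{a_{i,1}}{\lambda_iA}\cdot\frac{1}{U+l}\bigr)$.

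The delicate point is that $U^{-1}$ has argument up to $\alpha>\pi/2$, so this real part is not automatically negative for small $l$. We therefore estimate the product $\prod_{j<l}|\Lambda_i(u_j,z_j)|$ through $\sum_j \frac{a_{i,1}}{\lambda_i A}\frac{1}{U_j}$, which is approximately $\frac{a_{i,1}}{\lambda_iA}\log\frac{U+l}{U}$. Its real part equals $\nu_i\log\bigl|\frac{U+l}{U}\bigr|-\Im\bigl(\frac{a_{i,1}}{\lambda_iA}\bigr)\arg\frac{U+l}{U}$. The argument term is bounded by $2\alpha$, and $\log|\frac{U+l}{U}|\ge -C$ uniformly on the sector. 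Hence $\prod_{j<l}|\Lambda_i|\le M\bigl|\frac{U}{U+l}\bigr|^{\nu_i/k}\le M$, where $\nu_i>0$ gives decay and $M$ is independent of $l$, $U$, $z$. The errors contribute $\sum_j|U_j|^{-1-1/k}$, which is summable by the growth bound $|U_l|\ge c(|U|+l)$.

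It remains to control the flat terms. We have $|\tilde K_i(u_j,z_j)|\le C_N|U_j|^{-N}$, and a discrete variation-of-constants formula gives $|z_{l,i}|\le M|z_i|+M\sum_j C_N|U_j|^{-N}\le M\rho_0+C'R^{-N+1}$. We now choose $R$ large and $\rho_0$ small so that this bound is $<\rho$. The induction hypothesis $z_j\in\D_\rho^n$ is exactly what justifies the uniform estimates at each step, which closes the induction. The main obstacle is this uniform product bound $M$, which depends on \eqref{H_1} and on the opening restriction $\alpha<3\pi/4$. The final statement for $\delta_+^{[k_0]}(\alpha,r)$ then follows from \eqref{delta}.
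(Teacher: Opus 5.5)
Your argument is correct, and its skeleton is the paper's: a joint induction on $l$, the one-step estimate $U_{l+1}=U_l+1+O(U_l^{-1})$, the $\sin\alpha$ margin for the invariance of $\Delta_+(\alpha,R)$, a bound on $\prod_j|\Lambda_i(u_j,z_j)|$ uniform in $l$ coming from \eqref{H_1}, and variation of constants for the flat terms. The two quantitative estimates, however, are obtained differently.

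For the growth of the orbit, the paper allows a drift of size $\frac14$ per step. It introduces the first index $p(U)$ with $\Re U_l>0$ and needs $\sin\alpha+\cos\alpha>0$, which is where its proof uses $\alpha<\frac{3\pi}{4}$. You instead exploit that the drift is $O(1/(R\sin\alpha))$, so that $|U_l-(U+l)|\leq\epsilon l$ and $|U_l|\geq c(|U|+l)$; this requires only $\alpha<\pi$.

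For the product, the paper invokes the Leau--Fatou asymptotics $u_l^k\sim 1/l$ and $\sum_{l<p}1/l=\ln p+O(1)$. These are pointwise in the initial point. Since in fact $U_l\approx U+l$, the intermediate bound $\tilde C p^{-\nu_i/k}$ cannot hold with $\tilde C$ independent of $U$: for real $U=X\to+\infty$ and $p\approx X$ the product is about $2^{-\nu_i/k}$. Only its consequence, a bound by $C'$, is uniform. Your comparison with $\frac{a_{i,1}}{\lambda_iA}\log\frac{U_l}{U}$, whose real part splits into $\nu_i\log|U_l/U|$ and a bounded argument term, gives that uniform constant directly. It also shows that \eqref{H_1} controls large $l$, while the lower bound $|U+l|\geq c|U|$ controls the passage of the orbit through the left half of the sector.

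Two points should be tightened.

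First, a uniform bound on the error in $\sum_{j<l}U_j^{-1}\approx\log\frac{U+l}{U}$ does not follow from the crude estimate $|U_j-(U+j)|\leq\epsilon j$. That estimate leaves an error of order $\sum_j\epsilon j\,(|U|+j)^{-2}$, which diverges logarithmically. Telescope instead. Use the principal branch; this is legitimate because $|U_{j+1}-U_j|\leq 2$ while $\Delta_+(\alpha,R)$ lies at distance $R\sin\alpha$ from $(-\infty,0]$. Then $\log U_{j+1}-\log U_j=U_j^{-1}+O(|U_j|^{-2})$, hence by your growth bound $\sum_{j<l}U_j^{-1}=\log U_l-\log U+O(|U|^{-1})$, with $|\arg U_l-\arg U|<2\alpha$ and $|U_l|\geq c|U|$.

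Second, your closing remark that $M$ depends on $\alpha<\frac{3\pi}{4}$ is inaccurate for your own argument, which uses only $\alpha<\pi$. Relatedly, the one-step perturbation is bounded by $C/(R\sin\alpha)$ rather than $C/R$, since on $\Delta_+(\alpha,R)$ one only has $|U|\geq R\sin\alpha$.
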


\begin{proof}
In the proof, we write $U= u^{-k}$ and work with $U\in \Delta_+(\alpha,R)$, which includes the case $u \in \delta_+^{[k_0]}(\alpha,r)$ by \eqref{delta}.  For all $i=1,...,n$, writing $\tilde G_{\ast,i}(u,z) = z_i \Lambda_i(u,z) + \tilde K_i(u,z)$, we obtain that for all $p \geq 0$,
\begin{equation}\label{defGiast}
  \tilde G_{\ast,i}^{(p)} (u,z)=  z_i \prod_{l=0}^{p-1}\Lambda_i(\GG^{(l)}(u,z)) + \sum _{j=0}^{p-1} \tilde K_i(\GG^{(j)}(u,z))\prod_{l=j+1}^{p-1}\Lambda_i(\GG^{(l)}(u,z))
\end{equation}
for all $(U,z) \in \Delta_+(\alpha,R)\times \D_{\rho}^n$.

We have to prove that, for all $l\geq 0$, we have $U_l \in \Delta_+(\alpha,R)$ and $z_l \in \D_\rho^n$. We will proceed by induction on $l$.

If $l=0$, the result is true by definition of $(U_0,z_0) = (U,z) \in \Delta_+(\alpha,R) \times \D_{\rho_0}^n\subset \Delta_+(\alpha,R) \times \D_{\rho}^n$. Suppose that for all $l \leq p-1$, $(U_l,z_l)\in \Delta_+(\alpha,R) \times \D_\rho^n$.

\textbf{Estimates of the flat terms:} Since by induction assumption we have $z_l \in \D_\rho^n$, we can write
\begin{equation}\label{ul}u_{l+1} = u_l + Au_l^{k+1} + \OO(u_l^{2k+1})\end{equation}
(the flat term is "absorbed" by the big $\OO$). Recall that $A=-\frac{1}{k}$ (see Remark \ref{dilation}).

From \eqref{ul} 
\begin{align*}
    U_{l+1}=u_{l+1}^{-k}&  = u_l^{-k}(1-\frac{1}{k}u_l^{k}+\OO(u_l^{2k}))^{-k} \\
    & = u_l^{-k}(1+u_l^{k}+\OO(u_l^{2k})) \\
    & = u_l^{-k} + 1 + \OO(u_l^{k}) \\
    & = U_l + 1 + \OO(U_l^{-1})
\end{align*}

We would like that for all $l\geq 0$, $U_l \in \Delta_+(\alpha,R) \Rightarrow U_{l+1} \in \Delta_+(\alpha,R)$ and
\begin{equation} \label{estimO}
    \vert \OO(\vert U_l \vert^{-1})\vert \leq \frac{1}{4}.
\end{equation}

\begin{figure} 
    \begin{tikzpicture}[x=0.75pt,y=0.75pt,yscale=-1,xscale=1]

\draw [draw opacity=0][fill={rgb, 255:red, 80; green, 227; blue, 194 }, fill opacity=0.29] 
  (390,150) -- (420,150) arc (0:-142.43:30) -- cycle;
\draw (420,150) arc (0:-142.43:30);
\draw [shift={(366.73,131.05)}, rotate = 320.77] [fill={rgb, 255:red, 0; green, 0; blue, 0}][line width=0.08] [draw opacity=0] (10.72,-5.15) -- (0,0) -- (10.72,5.15) -- (7.12,0) -- cycle;

\draw [draw opacity=0][fill={rgb, 255:red, 189; green, 16; blue, 224 }, fill opacity=0.32] 
  (390,150) -- (366.56,131.27) arc (-142.43:-180:30) -- cycle;

\draw (366.56,131.27) arc (-142.43:-180:30);
\draw [shift={(360,149.63)}, rotate = 282.15] [fill={rgb, 255:red, 0; green, 0; blue, 0}][line width=0.08] [draw opacity=0] (8.93,-4.29) -- (0,0) -- (8.93,4.29) -- cycle ;

\draw (160,150) -- (467,150) ;
\draw [shift={(470,150)}, rotate = 180] [fill={rgb, 255:red, 0; green, 0; blue, 0}][line width=0.08] [draw opacity=0] (8.93,-4.29) -- (0,0) -- (8.93,4.29) -- cycle ;

\draw (300,40) -- (300,150) -- (300,250) ;
\draw [shift={(300,40)}, rotate = 90] [fill={rgb, 255:red, 0; green, 0; blue, 0}][line width=0.08] [draw opacity=0] (8.93,-4.29) -- (0,0) -- (8.93,4.29) -- cycle ;

\draw (260,50) -- (339.02,110.79) -- (390,150) ;
\draw [shift={(390,150)}, rotate = 37.57] [color={rgb, 255:red, 0; green, 0; blue, 0}][fill={rgb, 255:red, 0; green, 0; blue, 0}][line width=0.75] (0, 0) circle [x radius= 3.35, y radius= 3.35] ;

\draw (260,250) -- (390,150) ;

\draw [fill={rgb, 255:red, 255; green, 255; blue, 255}, fill opacity=1] (334.49,107.2) -- (343.56,114.37) -- (336.39,123.44) -- (327.32,116.27) -- cycle ;
\draw (334.49,107.2) -- (299.02,150.79) ;
\draw [shift={(299.02,150.79)}, rotate = 129.14] [color={rgb, 255:red, 0; green, 0; blue, 0}][fill={rgb, 255:red, 0; green, 0; blue, 0}][line width=0.75] (0, 0) circle [x radius= 3.35, y radius= 3.35] ;
\draw [shift={(334.49,107.2)}, rotate = 129.14] [color={rgb, 255:red, 0; green, 0; blue, 0}][fill={rgb, 255:red, 0; green, 0; blue, 0}][line width=0.75] (0, 0) circle [x radius= 3.35, y radius= 3.35] ;

\draw (231.5,107) .. controls (265.63,100.18) and (287.87,108.56) .. (311.67,117.33) ;
\draw [shift={(313.5,118)}, rotate = 200.17] [fill={rgb, 255:red, 0; green, 0; blue, 0}][line width=0.08] [draw opacity=0] (12,-3) -- (0,0) -- (12,3) -- cycle ;

\draw (401,102.4) node [anchor=north west][inner sep=0.75pt] {$\alpha$};
\draw (386,162.4) node [anchor=north west][inner sep=0.75pt] {$R$};
\draw (283,152.4) node [anchor=north west][inner sep=0.75pt] {$0$};
\draw (328.39,133.84) node [anchor=north west][inner sep=0.75pt] [font=\scriptsize] {$\pi -\alpha$};
\draw (127,110.4) node [anchor=north west][inner sep=0.75pt] {$|U_{min}| = R\sin \alpha$};

\end{tikzpicture}
    \caption{Definition of minimal distance to the origin}
    \label{dessin1}
\end{figure}

Given $l \geq 0$, notice that \eqref{estimO} holds if and only if $\frac{C}{\vert U_l\vert} \leq \frac{1}{4}$, that is to say, if and only if $4C \leq \vert U_l\vert$. But if $ U\in \Delta_+(\alpha,R)$, then $\vert U \vert \geq R \sin \alpha$ (see Figure \ref{dessin1}), so condition \eqref{estimO} is satisfied if $R \geq \frac{4C}{\sin \alpha}$. 
\begin{figure} 
    \tikzset{every picture/.style={line width=0.75pt}} 

\begin{tikzpicture}[x=0.75pt,y=0.75pt,yscale=-1,xscale=1]

\draw [draw opacity=0][fill={rgb, 255:red, 80; green, 227; blue, 194 }, fill opacity=0.29] 
  (404,207.24) -- (434,207.24) arc (0:-153.53:30) -- cycle;

\draw (378.81,190.93) .. controls (381.57,186.66) and (385.47,183.02) .. (390.32,180.54) .. controls (405.07,172.98) and (423.15,178.81) .. (430.7,193.56) .. controls (432.95,197.95) and (434.01,202.63) .. (434.01,207.24) ;  \draw [shift={(377.31,193.54)}, rotate = 308.66] [fill={rgb, 255:red, 0; green, 0; blue, 0 }  ][line width=0.08]  [draw opacity=0] (10.72,-5.15) -- (0,0) -- (10.72,5.15) -- (7.12,0) -- cycle    ;

\draw    (200,205.57) -- (481,207.22) ;
\draw [shift={(484,207.24)}, rotate = 180.34] [fill={rgb, 255:red, 0; green, 0; blue, 0 }  ][line width=0.08]  [draw opacity=0] (8.93,-4.29) -- (0,0) -- (8.93,4.29) -- cycle    ;
\draw    (200,105.57) -- (240,125.57) -- (404,207.24) ;
\draw [shift={(404,207.24)}, rotate = 26.47] [color={rgb, 255:red, 0; green, 0; blue, 0 }  ][fill={rgb, 255:red, 0; green, 0; blue, 0 }  ][line width=0.75]      (0, 0) circle [x radius= 3.35, y radius= 3.35]   ;
\draw    (340,245.57) -- (404,207.24) ;
\draw [shift={(404,207.24)}, rotate = 329.08] [color={rgb, 255:red, 0; green, 0; blue, 0 }  ][fill={rgb, 255:red, 0; green, 0; blue, 0 }  ][line width=0.75]      (0, 0) circle [x radius= 3.35, y radius= 3.35]   ;
\draw  [dash pattern={on 4.5pt off 4.5pt}]  (240,125.57) -- (290,125.57) ;
\draw [shift={(290,125.57)}, rotate = 0] [color={rgb, 255:red, 0; green, 0; blue, 0 }  ][fill={rgb, 255:red, 0; green, 0; blue, 0 }  ][line width=0.75]      (0, 0) circle [x radius= 3.35, y radius= 3.35]   ;
\draw  [dash pattern={on 4.5pt off 4.5pt}]  (280,145.57) -- (290,125.57) ;
\draw  [fill={rgb, 255:red, 255; green, 255; blue, 255 }  ,fill opacity=1 ] (277.88,139.37) -- (281.71,141.5) -- (279.57,145.33) -- (275.75,143.2) -- cycle ;
\draw  [fill={rgb, 255:red, 248; green, 231; blue, 28 }  ,fill opacity=0.35 ] (263.26,121.44) .. controls (263.26,107.15) and (274.84,95.57) .. (289.13,95.57) .. controls (303.42,95.57) and (315,107.15) .. (315,121.44) .. controls (315,135.72) and (303.42,147.3) .. (289.13,147.3) .. controls (274.84,147.3) and (263.26,135.72) .. (263.26,121.44) -- cycle ;
\draw    (317.81,110.91) .. controls (344.58,102.59) and (331.97,101.74) .. (360,105.57) ;
\draw [shift={(315.67,111.57)}, rotate = 343.21] [fill={rgb, 255:red, 0; green, 0; blue, 0 }  ][line width=0.08]  [draw opacity=0] (12,-3) -- (0,0) -- (12,3) -- cycle    ;

\draw [draw opacity=0][fill={rgb, 255:red, 189; green, 16; blue, 224 }, fill opacity=0.32] 
  (240,125.57) -- (260,125.57) arc (0:26.47:20) -- cycle;

\draw (259.75,128.74) .. controls (259.39,130.97) and (258.67,133.08) .. (257.64,135) ;  \draw [shift={(260,125.74)}, rotate = 107.69] [fill={rgb, 255:red, 0; green, 0; blue, 0 }  ][line width=0.08]  [draw opacity=0] (3.57,-1.72) -- (0,0) -- (3.57,1.72) -- cycle    ;

\draw    (249.51,137.7) .. controls (246.23,152.23) and (246.4,154.55) .. (250,165.57) ;
\draw [shift={(250,135.57)}, rotate = 102.91] [fill={rgb, 255:red, 0; green, 0; blue, 0 }  ][line width=0.08]  [draw opacity=0] (12,-3) -- (0,0) -- (12,3) -- cycle    ;

\draw (415,159.64) node [anchor=north west][inner sep=0.75pt]    {$\alpha $};
\draw (400,219.64) node [anchor=north west][inner sep=0.75pt]    {$R$};
\draw (231,107.97) node [anchor=north west][inner sep=0.75pt]  [font=\footnotesize]  {$U_{l}$};
\draw (271,107.97) node [anchor=north west][inner sep=0.75pt]  [font=\footnotesize]  {$U_{l} +1$};
\draw (288,132.97) node [anchor=north west][inner sep=0.75pt]  [font=\footnotesize]  {$d$};
\draw (371,97.97) node [anchor=north west][inner sep=0.75pt]  [font=\footnotesize]  {$ \begin{array}{l}
U_{l}{}_{+}{}_{1} \in D\left( U_{l} +1,\frac{1}{4}\right) \subset \Delta _{+}( \alpha ,R)\\
\Rightarrow d=\ \sin \alpha  >\frac{1}{4}
\end{array}$};
\draw (236,167.97) node [anchor=north west][inner sep=0.75pt]  [font=\footnotesize]  {$\pi -\alpha $};

\end{tikzpicture}
    \caption{Condition of stability}
    \label{dessin2}
\end{figure}
Then the critical situation for $U_l$ is to be on the boundary of the sector, and $U_{l+1}$ is in the disc of center $U_l+1$ and of radius $d \leq \frac{1}{4}$ (see Figure \ref{dessin2}). Then $U_{l+1} \in \Delta_+(\alpha,R)$ if $\sin(\pi-\alpha) = \sin \alpha > \frac{1}{4}$.

Let $p(U) = \min (l \geq 0, \RE(U_l)> 0)$. In what follows, we shall study the orbit of $U = U_0 \in \Delta_+(\alpha,R)$ on the upper-half plane ($\Im U >0$). The case $\Im U < 0$ is similar. From \eqref{estimO} we have that for all $l\geq 0$,
\begin{equation}\label{estimreelle}
\RE U_l +\frac{3}{4}\leq \RE U_{l+1} \leq \RE U_l + \frac{5}{4},
\end{equation}
\begin{equation}\label{estimimaginaire}
\Im U_l -\frac{1}{4}\leq \Im U_{l+1} \leq \Im U_l + \frac{1}{4}.
\end{equation}
If $\RE U > 0$ then $p(U)=0$. If $\RE U \leq 0$, then
\[ \RE U_{p(U)} \geq \RE U + \frac{3p(U)}{4} \Rightarrow p(U) \leq \frac{4}{3}(\RE U_{p(U)} - \RE U).\]

\begin{figure} 
    \tikzset{every picture/.style={line width=0.75pt}} 

\begin{tikzpicture}[x=0.75pt,y=0.75pt,yscale=-1,xscale=1]

\draw [draw opacity=0][fill={rgb, 255:red, 80; green, 227; blue, 194 }, fill opacity=0.29] 
  (460,191.67) -- (490.01,191.67) .. controls (490.01,187.06) and (488.95,182.38) .. (486.7,178) .. controls (479.15,163.25) and (461.07,157.42) .. (446.32,164.97) .. controls (439.82,168.3) and (435.06,173.67) .. (432.4,179.91) -- cycle;

\draw    (433.74,177.16) .. controls (436.53,172.11) and (440.8,167.8) .. (446.32,164.97) .. controls (461.07,157.42) and (479.15,163.25) .. (486.7,178) .. controls (488.95,182.38) and (490.01,191.67) .. (490.01,191.67) ;  \draw [shift={(432.4,179.91)}, rotate = 304.52] [fill={rgb, 255:red, 0; green, 0; blue, 0 }  ][line width=0.08]  [draw opacity=0] (10.72,-5.15) -- (0,0) -- (10.72,5.15) -- (7.12,0) -- cycle    ;

\draw    (190,190) -- (507,190) ;
\draw [shift={(510,190)}, rotate = 180] [fill={rgb, 255:red, 0; green, 0; blue, 0 }  ][line width=0.08]  [draw opacity=0] (8.93,-4.29) -- (0,0) -- (8.93,4.29) -- cycle    ;
\draw    (190,80) -- (460,191.67) ;
\draw    (426,210) -- (460,191.67) ;

\draw [shift={(460,191.67)}, rotate = 331.67] [color={rgb, 255:red, 0; green, 0; blue, 0 }  ][fill={rgb, 255:red, 0; green, 0; blue, 0 }  ][line width=0.75]      (0, 0) circle [x radius= 3.35, y radius= 3.35]   ;

\draw    (390,63) -- (390,190) -- (390,210) ;
\draw [shift={(390,60)}, rotate = 90] [fill={rgb, 255:red, 0; green, 0; blue, 0 }  ][line width=0.08]  [draw opacity=0] (8.93,-4.29) -- (0,0) -- (8.93,4.29) -- cycle    ;
\draw    (250,90) ;
\draw [shift={(250,90)}, rotate = 0] [color={rgb, 255:red, 0; green, 0; blue, 0 }  ][fill={rgb, 255:red, 0; green, 0; blue, 0 }  ][line width=0.75]      (0, 0) circle [x radius= 3.35, y radius= 3.35]   ;
\draw  [dash pattern={on 4.5pt off 4.5pt}]  (250,90) -- (390,90) ;
\draw  [dash pattern={on 4.5pt off 4.5pt}]  (250,90) -- (250,190) ;
\draw [color={rgb, 255:red, 144; green, 19; blue, 254 }  ,draw opacity=1 ]   (250,90) -- (390,190) ;

\draw [draw opacity=0][fill={rgb, 255:red, 189; green, 16; blue, 224 }, fill opacity=0.21] 
  (390,190) -- (389.95,156.61) .. controls (383.94,154.05) and (377.89,153.79) .. (372.79,156.4) .. controls (367.82,158.95) and (364.53,163.83) .. (363.04,169.99) -- cycle;

\draw [color={rgb, 255:red, 144; green, 19; blue, 254 }  ,draw opacity=1 ]   (363.93,167.05) .. controls (365.66,162.28) and (368.64,158.53) .. (372.79,156.4) .. controls (377.89,153.79) and (383.94,154.05) .. (389.95,156.61) ;  \draw [shift={(363.04,169.99)}, rotate = 297.12] [fill={rgb, 255:red, 144; green, 19; blue, 254 }  ,fill opacity=1 ][line width=0.08]  [draw opacity=0] (8.93,-4.29) -- (0,0) -- (8.93,4.29) -- cycle    ;

\draw (471,142.4) node [anchor=north west][inner sep=0.75pt]    {$\alpha $};
\draw (456,204.07) node [anchor=north west][inner sep=0.75pt]    {$R$};
\draw (234,62.4) node [anchor=north west][inner sep=0.75pt]    {$U$};
\draw (301,72.4) node [anchor=north west][inner sep=0.75pt]  [font=\small]  {$-\Re U$};
\draw (221,142.4) node [anchor=north west][inner sep=0.75pt]  [font=\small]  {$\Im U$};
\draw (371,132.4) node [anchor=north west][inner sep=0.75pt]  [color={rgb, 255:red, 189; green, 16; blue, 224 }  ,opacity=1 ]    {$\gamma $};
\draw (301,152.4) node [anchor=north west][inner sep=0.75pt]  [font=\small,color={rgb, 255:red, 189; green, 16; blue, 224 }  ,opacity=1 ]    {$|U|$};
\draw (420,82.4) node [anchor=north west][inner sep=0.75pt]    {$ \begin{array}{l}
-\Re U=\vert U \vert \sin \gamma \leq \vert U \vert \cos(\pi-\alpha)\\
\ \  
\end{array}$};
\draw (428,102.4) node [anchor=north west][inner sep=0.75pt]    {$\ \Im U = \vert U \vert \cos \gamma \geq \vert U \vert \sin \alpha$};
\draw (371,192.4) node [anchor=north west][inner sep=0.75pt]    {$0$};

\end{tikzpicture}
    \caption{Estimates of real and imaginary parts}
    \label{dessin3}
\end{figure}

Notice that by definition of $p(U)$, we have $\RE U_{p(U)-1} \leq 0$ so by \eqref{estimreelle}, $\RE U_{p(U)} \leq \frac{5}{4}$. Moreover, since $U \in \Delta_+(\alpha,R)$, denoting by $\gamma$ the angle between $U$ and the positive imaginary axis, we have $-\RE U = \vert U \vert \sin \gamma \leq \vert U \vert \cos(\pi-\alpha)$ (see Figure \ref{dessin3}), then 
\[ p(U) \leq \frac{4}{3}(\frac{5}{4} + \vert U \vert\cos(\pi-\alpha))\]
and finally for $R$ large enough,
\begin{equation}\label{p(U)}
    p(U) \leq \max (0, 4 c_\alpha\vert U \vert )
\end{equation}
where $c_\alpha =  \cos(\pi-\alpha)$.

Let $N$ be  a large integer to be defined below, we would like to estimate 
\[\sum_{l=0}^{p-1} \vert u_l\vert^{N} = \sum_{l=0}^{p-1} \vert U_l\vert^{-\frac{N}{k}} \leq \sum_{l=0}^{p(U)} \vert U_l\vert^{-\frac{N}{k}} + \vert U_{p(U)+1}\vert^{-\frac{N}{k}} +\sum_{l=p(U)+2}^{\infty} \vert U_l\vert^{-\frac{N}{k}}.\]

We have, for all $l \leq p(U)+1$,
\[ \vert U_l \vert \geq  \vert \Im U_l\vert \geq \Im U - \frac{l}{4} \geq \Im U - \frac{p(U)+1}{4}. \]
We have $\Im U = \vert U \vert \cos \gamma$ and since $0 < \gamma < \alpha - \frac{\pi}{2}$ (see Figure \ref{dessin3}, as we are in the case $\RE U \leq 0$) we get $\Im U \geq \vert U \vert \sin \alpha$ and then 
\[ \vert U_l \vert \geq \vert U \vert \sin\alpha - c_\alpha \vert U \vert  \geq c'_\alpha\vert U \vert\]
with $c'_\alpha = \sin \alpha - \cos (\pi - \alpha) > 0$ if and only if $\sin \alpha + \cos\alpha > 0$ which is true if $\alpha < \frac{3\pi}{4}$, then

\[\sum_{l=0}^{p(U)} \vert U_l\vert^{-\frac{N}{k}} + \vert U_{p(U)+1}\vert^{-\frac{N}{k}} \leq (p(U)+2) (c_\alpha' \vert U\vert)^{-\frac{N}{k}} \leq (4c_\alpha \vert U \vert +2)(c_\alpha' \vert U\vert)^{-\frac{N}{k}} \leq C_1\]
if $N \geq 2k$.

On the other hand, from \eqref{estimreelle} we have for all $l \geq p(U)+1$,
\[ \vert U_l \vert \geq \RE U_l \geq \RE U_{p(U)} + \frac{3}{4}(l-p(U)) \geq \frac{3}{4}(l-p(U))\]
since $\RE U_{p(U)} > 0$. Then
\[ \sum_{l=p(U)+2}^{\infty} \vert U_l\vert^{-\frac{N}{k}} \leq \sum_{l=p(U)+2}^\infty (\frac{4}{3})^{\frac{N}{k}} \frac{1}{(l-p(U))^{\frac{N}{k}}} = \sum_{l=2}^{\infty} (\frac{4}{3})^{\frac{N}{k}} \frac{1}{l^{\frac{N}{k}}} \leq C_2\]
if $N \geq 2k$. Finally, letting $C_3=C_1+C_2$ which depends only on $\alpha, R, N$ and $k$, we obtain
\begin{equation}\label{estimul}
\sum_{l=0}^{p-1} \vert u_l\vert^{N} \leq C_3.
\end{equation}

\textbf{Norm of the product of $\vert \Lambda_i \vert$: }by induction assumption, for $l\leq p-1$, we have $z_l \in \D_\rho^n$ and then
\[ \Lambda_i(u_l,z_l) = \lambda_i + a_{i,1}u_l^{k} + \OO(u_l^{k+1}).\]
 Then for all $l\leq p-1$,

\begin{align*}
    \ln \vert \Lambda_i(u_l,z_l) \vert & = \ln \vert \lambda_i + a_{i,1}u_l^{k} + \OO(u_l^{k+1}) \vert \\
    & = \ln (\vert \lambda_i \vert\vert1+\frac{a_{i,1}}{\lambda_i}u_l^k + \OO(u_l^{k+1})\vert) \\
    & = \frac{1}{2}\ln \big( 1+(\overline{a_{i,1}\lambda_i^{-1}u_l^k} + a_{i,1}\lambda_i^{-1}u_l^k) + \OO(u_l^{k+1}) \big) \\
    & = \frac{1}{2}\ln \big(  1 + 2\RE(a_{i,1}\lambda_i^{-1}u_l^k) + \OO(u_l^{k+1})\big) \\
    & = \frac{1}{2}( 2\RE(a_{i,1}\lambda_i^{-1}u_l^k) +  \OO(u_l^{k+1}))\\
    & = \RE(a_{i,1}\lambda_i^{-1}u_l^k) +  \OO(u_l^{k+1})
\end{align*}

By Leau-Fatou theorem, on the attracting petal, 
which corresponds to $\Delta_+(\alpha,R)$ in the variable $U$ (see for example \cite[Lemma 10.1]{milnor}), we have
\[ u_l^k \underset{l\rightarrow +\infty}{\sim} -\frac{1}{kAl},\]
then for all $i=1,...,n$,
\[ \ln\vert \Lambda_i(u_l,z_l) \vert= -\frac{1}{l}\RE(a_{i,1}\lambda_i^{-1} A^{-1}k^{-1}) + \OO(l^{-1-\frac{1}{k}}) = -\frac{1}{kl} \nu_i  + \OO(l^{-1-\frac{1}{k}}) \]
where 
\[ \nu_i  := \RE(a_{i,1}(\lambda_i A)^{-1}).\]

We have to estimate $\prod_{l=1}^{p-1}\Lambda_i(u_l,z_l)$ for $i=1,...,n$, since $\sum_{l=1}^{p-1}\frac{1}{l} = \ln(p) + \OO(1)$, and since $\sum_{l=1}^{p-1}\OO(l^{-1-\frac{1}{k}})$ converges absolutely for all $k\geq 1$, we have
    \[ \sum_{l=1}^{p-1}\ln \vert \Lambda_i(u_l,z_l)\vert = -\frac{1}{k}\nu_i \ln(p) + \OO(1),\]
    and then since by assumption \eqref{H_1} $\nu_i > 0$,
    \[ \prod_{l=1}^{p-1} \vert \Lambda_i(u_l,z_l)\vert \leq \tilde Cp^{-\frac{\nu_i}{k}} \leq C'.\]

Then for all $i=1,...,n$,
\begin{equation}\label{estimproduit} \prod_{l=1}^{p-1} \vert \Lambda_i(\GG^{(l)}(u,z))\vert \leq C'.\end{equation}

\textbf{Estimate of $\tilde G_{\ast,i}$:} Since $\tilde K_i$ is flat and since by assumption $z_j \in \D_\rho^n$ for all $j \leq p-1$, there exists a positive constant $C_{4k}$ such that
\[  \vert \tilde K_i(\GG^{(j)}(u,z))\vert \leq C_{4k} \vert u_j\vert^{4k}. \]
Since for all $l\leq p-1$, $U_l \in \Delta_+(\alpha,R)$, then we have $\vert U_l \vert \geq R \sin \alpha$, so that \[ \sum_{l=0}^{p-1}\vert u_l \vert^{4k} = \sum_{l=0}^{p-1}\vert U_l\vert^{-4} \leq \frac{1}{(R \sin \alpha)^2} \sum_{l=0}^{p-1} \vert U_l\vert^{-2} \leq \frac{C_3}{(R\sin \alpha)^2},\]
and then using previous estimates, we have,
\[ \left\vert \sum _{j=0}^{p-1} \tilde K_i(\GG^{(j)}(u,z))\left(\prod_{l=j+1}^{p-1}\Lambda_i(\GG^{(l)}(u,z))\right)\right\vert \leq  C_{4k}\sum _{j=0}^{p-1} \vert u_j\vert^{4k}\prod_{l=j+1}^{p-1}\vert\Lambda_i(u_l,z_l)\vert \leq  \frac{C_{4k}C_3C'}{(R\sin \alpha)^2} \]
where the constants are independent of $p$. Finally, since $\Vert z \Vert \leq \rho_0$, by \re{defGiast} we get
\[  \vert \tilde G_{\ast,i}^{(p)} (u,z)\vert \leq  \rho_0 C' + \frac{C_{4k}C_3C'}{(R\sin \alpha)^2}.\]
Choosing $R$ large enough so that $\frac{C_{4k}C_3C'}{(R\sin \alpha)^2} \leq \frac{\rho}{2}$ (which is possible since $C_3$ has a uniform upper bound for $R\geq R_0$), and letting $\rho_0 := \frac{\rho}{2C'}$, we get $\vert \tilde G_{\ast,i}^{(p)} \vert \leq \rho$ which proves the induction.

\end{proof}
\begin{remark}\label{remarquestability}
    Since $G_{\ast,0}$ is a perturbation of $G_0$, the previous result also implies that, working at infinity, the iterates of $G_{0}$ also remain in $\Delta_+(\alpha,R)$.
\end{remark}

\subsection{Solutions for some functional equations}
The following proposition will be useful to solve the conjugacy equation, in the extended case. We recall that $\GG(u,z)=(\tilde G_{\ast,0}(u,z),\tilde G_\ast(u,z))$.
\begin{prop}\label{Propositionconjug}
We use the notations previously introduced and the conditions of Lemma \ref{stability}. Let $\tilde L \in (\B^\infty)^{n+1}$ be flat in $u \in \delta_+^{[k_0]}(\alpha,r)$ uniformly in $z \in \D_{\rho}^n$. Let $\Lambda = (\Lambda_0, \Lambda_1,..., \Lambda_n)$ with $\Lambda_0 \equiv 1$ and the $\Lambda_i$ defined in \eqref{defLambda} for $i=1,...,n$. We consider the equations in the unknown $\tilde \varphi =(\tilde \varphi_0, \tilde \varphi_1,...,\tilde \varphi_n)$:
\begin{equation}\label{equaphi2}
\tilde \varphi_i(\GG(u,z)) - \tilde \varphi_i(u,z)\Lambda_i(u,z) = \tilde L_i(u,z)
\end{equation}
for all $i=0,...,n$. 
Then $\eqref{equaphi2}$ admits a unique solution $\tilde \varphi \in (\B^\infty)^{n+1}$ (that is to say each $\tilde \varphi_i$ is flat in $u$ uniformly in $z$). Moreover, on $\Sigma = \{u=z^\beta\}$, we have $\tilde G_\ast(z^\beta,z)=G_\ast(z)$ and $\tilde G_{\ast,0}(z^\beta,z)=G_{\ast,0}(z)=G_\ast(z)^\beta$, so that letting $\varphi(z) := \tilde  \varphi (z^\beta,z)$, \[
\tilde\varphi_i(\tilde G_{\ast,0}(z^\beta,z), \tilde G_\ast(z^\beta,z)) = \tilde\varphi_i(G_\ast(z)^\beta, G_\ast(z)) = \varphi_i(G_\ast(z)),
\]
and $\varphi$ satisfies the equation
\begin{equation}\label{equaphisigma}
\varphi_i(G_\ast(z)) - \varphi_i(z)\Lambda_i(z^\beta,z) = \tilde L_i(z^\beta,z), \quad i=0,...,n
\end{equation}
for all $z \in \D_\rho^n$ such that $z^\beta \in \delta_+^{[k_0]}(\alpha,r)$. Moreover, $\varphi$ is the unique solution of \eqref{equaphisigma} which is flat with respect to $z^\beta$.
\end{prop}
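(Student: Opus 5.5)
The idea is to solve \eqref{equaphi2} by summing along forward orbits of $\GG$, which stay in the sector by Lemma \ref{stability}. Rewriting the equation as $\tilde\varphi_i(u,z) = \Lambda_i(u,z)^{-1}\big(\tilde\varphi_i(\GG(u,z)) - \tilde L_i(u,z)\big)$ and iterating suggests the candidate
\[
\tilde\varphi_i(u,z) = -\sum_{j=0}^{\infty} \tilde L_i(\GG^{(j)}(u,z))\prod_{l=0}^{j}\Lambda_i(\GG^{(l)}(u,z))^{-1}.
\]
With this formula the telescoping check of \eqref{equaphi2} is immediate once convergence is known. Since $\Lambda_0\equiv1$, the case $i=0$ is simply $\tilde\varphi_0=-\sum_j \tilde L_0\circ\GG^{(j)}$.

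\textbf{Step 1: convergence.} By Lemma \ref{stability}, for $(u,z)\in\delta_+^{[k_0]}(\alpha,r)\times\D_{\rho_0}^n$ every iterate $(u_j,z_j)$ stays in $\Delta_+(\alpha,R)\times\D^n_\rho$, so flatness gives $|\tilde L_i(u_j,z_j)|\le C_N|u_j|^N$ for every $N$. The inverse products are handled as in the proof of Lemma \ref{stability}: $\ln|\Lambda_i(u_l,z_l)^{-1}| = -\RE(a_{i,1}\lambda_i^{-1}u_l^k)+\OO(u_l^{k+1})$, which via $u_l^k\sim 1/l$ is about $\frac{\nu_i}{k}\frac1l$. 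Hence $\prod_{l\le j}|\Lambda_i^{-1}|\le C\,(j+1)^{\nu_i/k}$, possibly with a polynomial dependence on $|U|$ coming from the first $p(U)$ steps. This growth is only polynomial in $j$, while $\sum_j |u_j|^N$ behaves like $\sum_j |U_j|^{-N/k}$ and is controlled as in \eqref{estimul}. Choosing $N$ large ($N\ge k(\nu_i/k+2)$) makes the series converge normally, so $\tilde\varphi_i$ is holomorphic on the interior of the domain.

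\textbf{Step 2: flatness.} To show $|\tilde\varphi_i(u,z)|\le C'_M|u|^M$, I would use that along the orbit $|U_j|\ge c\,|U|$ for $j\le p(U)+1$ (shown in Lemma \ref{stability}) and $|U_j|\gtrsim |U|+j$ afterwards. Then
\[
\sum_j |U_j|^{-N/k}(j+1)^{\nu_i/k}\lesssim |U|^{-N/k+\nu_i/k+2},
\]
which is $\OO(|u|^{M})$ once $N$ is chosen large relative to $M$. Whitney smoothness up to the closed sector should follow by the same estimates applied to derivatives, using Cauchy estimates on slightly larger sectors. I expect this step, uniform decay and control of derivatives near the boundary of the sector, to be the main technical obstacle.

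\textbf{Step 3: uniqueness and restriction.} If $\psi$ is a flat solution of the homogeneous equation, then $\psi(u,z)=\psi(\GG^{(p)}(u,z))\prod_{l<p}\Lambda_i(\GG^{(l)})^{-1}$. The right-hand side is bounded by $C_N|u_p|^N p^{\nu_i/k}\to0$, since $|u_p|^k\sim 1/p$, so $\psi=0$. Restricting to $\Sigma$, we have $\tilde G_\ast(z^\beta,z)=G_\ast(z)$ and $\tilde G_{\ast,0}(z^\beta,z)=G_\ast(z)^\beta$, so $\Sigma$ is $\GG$-invariant and \eqref{equaphisigma} follows directly. Uniqueness among solutions flat in $z^\beta$ follows from the same orbit argument carried out on $\Sigma$. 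Alternatively, one can extend a given solution by Lemma \ref{lemRM}, but the direct orbit argument avoids the extension.
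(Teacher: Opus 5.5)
Your proposal follows the paper's proof: the same forward-orbit series $-\sum_{j\geq 0}\tilde L_i(\GG^{(j)})\big/\prod_{l\leq j}\Lambda_i(\GG^{(l)})$, convergence and flatness because the flat numerator beats a polynomially growing inverse product, uniqueness by iterating the homogeneous equation, and restriction to the $\GG$-invariant set $\Sigma$. The differences are only in the technical estimates. The paper obtains the uniform lower bound $|\tilde G^{(j)}_{\ast,0}(U,z)|\geq \frac12|U+j|$ by comparing with the unperturbed orbit of $G_0$. It then bounds the inverse product crudely by $(1+j)^{M'}$, using $|\Lambda_i(\GG^{(l)})/\lambda_i-1|\leq C/|U+l|$; this bound is uniform in $U$ and spares you the pointwise asymptotics $u_l^k\sim 1/l$, the exponent $\nu_i/k$, and your hedge about possible $|U|$-dependence.
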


The proof will be inspired by \cite[Chapter 6]{Loday} and will use the following results:
		
		\begin{lemma}[{see \cite[Lemma 6.2.1]{Loday}}]\label{somme}
			Let $0 < \alpha < \pi$ and $R > 1$. For all $m \in \N \setminus\{0\}$, there exists a constant $A_m$ which depends on $\alpha$ and $m$ (but not on $R$) such that, for all $U \in \Delta_+(\alpha, R)$,
			\begin{equation}\label{eq:somme}
				\sum_{p \geq 0} \frac{1}{\vert U + p \vert^{m+1}} \leq \frac{A_m}{\vert U \vert^m}.
			\end{equation}
		\end{lemma}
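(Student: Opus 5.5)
The approach is purely geometric: control $|U+p|$ from below, uniformly in $U\in\Delta_+(\alpha,R)$, and compare the sum with an integral. The key observation is that $\Delta_+(\alpha,R)$ is stable under the translations $U\mapsto U+p$, $p\ge 0$, because it is a sector with vertex $R>0$ opening around the positive real axis. Moreover, every point of it lies in the sector $\{|\arg U|<\alpha\}$ with vertex $0$. So it suffices to prove the estimate for all $U$ with $|\arg U|<\alpha$, $U\neq 0$, with a constant depending only on $\alpha$ and $m$. This is why the constant does not depend on $R$.

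\textbf{Step 1: a uniform lower bound.} I will first show that there is $c=c_\alpha>0$ such that
\[
|U+t|\ge c\,(|U|+t)
\]
for all $t\ge 0$ and all $U$ with $|\arg U|<\alpha$. I split into two cases.
\begin{itemize}
\item If $|\arg U|\le \pi/2$, then $\RE U\ge 0$, so $|U+t|^2=|U|^2+2t\RE U+t^2\ge |U|^2+t^2\ge \tfrac12(|U|+t)^2$.
\item If $\pi/2<|\arg U|<\alpha<\pi$, then $U=|U|e^{i\theta}$ and $t$ are two vectors whose angle is $|\theta|<\alpha$. Hence $|U+t|^2\ge (1-|\cos\alpha|)(|U|^2+t^2)$, which follows from $2|U|t\cos\theta\ge -|\cos\alpha|(|U|^2+t^2)$.
\end{itemize}
So I can take $c_\alpha=\min\bigl(1/\sqrt2,\sqrt{(1-|\cos\alpha|)/2}\bigr)$.

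\textbf{Step 2: sum versus integral.} By Step 1, with $x=|U|$,
\[
\sum_{p\ge0}|U+p|^{-(m+1)}\le c_\alpha^{-(m+1)}\sum_{p\ge0}(x+p)^{-(m+1)} .
\]
The right-hand sum is at most $x^{-(m+1)}+\int_0^\infty (x+t)^{-(m+1)}\,dt = x^{-(m+1)}+\tfrac{1}{m}x^{-m}$, since the summand is decreasing in $p$.

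\textbf{Step 3: removing the extra term.} The term $x^{-(m+1)}$ must be bounded by a multiple of $x^{-m}$, which requires $x$ to be bounded away from $0$. For $U\in\Delta_+(\alpha,R)$ we have $|U|\ge R\sin\alpha$ when $\alpha\ge\pi/2$, and $|U|>R$ when $\alpha<\pi/2$; in both cases $|U|\ge \min(1,\sin\alpha)R$. Since $R>1$, this gives $x^{-(m+1)}\le x^{-m}/\min(1,\sin\alpha)$.

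Combining the three steps yields the estimate with
\[
A_m=c_\alpha^{-(m+1)}\Bigl(\tfrac1m+\tfrac1{\min(1,\sin\alpha)}\Bigr),
\]
which depends only on $\alpha$ and $m$, as required. The only delicate point is Step 1 in the obtuse case $\alpha>\pi/2$ (the one actually used in the paper). There the bound $\RE U\ge0$ fails, and one must use the angle bound $\alpha<\pi$ to keep $U+t$ from approaching $0$.
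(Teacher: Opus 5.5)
Your proof is correct and complete. The paper gives no proof of this lemma; it only quotes it from Loday-Richaud, so there is nothing in the text to compare against. Your three ingredients give a self-contained proof with $A_m=c_\alpha^{-(m+1)}\bigl(\frac1m+\frac1{\sin\alpha}\bigr)$, depending only on $\alpha$ and $m$:
\begin{itemize}
\item the homogeneous lower bound $|U+t|\ge c_\alpha(|U|+t)$ for $t\ge 0$ on the cone $\{|\arg U|<\alpha\}$;
\item the comparison of $\sum_{p\ge 0}(x+p)^{-(m+1)}$ with $\int_0^\infty (x+t)^{-(m+1)}\,dt$;
\item the absorption of the leftover term $x^{-(m+1)}$ using $|U|\ge R\sin\alpha>\sin\alpha$.
\end{itemize}
Since $\sin\alpha\le 1$, your $\min(1,\sin\alpha)$ is just $\sin\alpha$, and $|U|\ge R\sin\alpha$ holds for every $\alpha\in(0,\pi)$. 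This is the same bound the paper uses with Figure~\ref{dessin1}.

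One sentence of your overview is false and should be rephrased. The estimate does not hold on the whole cone $\{|\arg U|<\alpha,\ U\ne 0\}$ with a constant depending only on $\alpha$ and $m$: for $U=\varepsilon\to 0^+$ the left-hand side is at least $\varepsilon^{-(m+1)}$. On the cone you only get $\sum_{p\ge 0}|U+p|^{-(m+1)}\le c_\alpha^{-(m+1)}\bigl(|U|^{-(m+1)}+\frac1m|U|^{-m}\bigr)$. Passing to $A_m|U|^{-m}$ needs $|U|$ bounded below, i.e. membership in $\Delta_+(\alpha,R)$ with $R>1$, and that is exactly what your Step 3 uses. Also, the translation invariance of $\Delta_+(\alpha,R)$ that you call the key observation is never actually used.
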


        \begin{lemma}\label{estimationdusinus}
        Let $(U,z) \in \Delta_+(\alpha,R) \times \D_{\rho}^n$, $(\tilde U,\tilde z) \in \C \times \D_{\rho}^n$, and suppose that $\vert U- \tilde U \vert  \leq \sin \alpha$. If $K_0$ is flat in $U$ at infinity uniformly in $z \in \D_\rho^n$, then 
        \begin{equation}\label{flat2}
				\displaystyle\sup_{\substack{\vert U- \tilde U|\leq \sin \alpha \\(U,z) \in \Delta_+(\alpha,R)\times \D_{\rho}^n}}  \vert K_0(\tilde U,\tilde z)\vert \leq \frac{C}{\vert U \vert^{m+1}}
			\end{equation}
            where $C$ depends on $m$.
            If $P: \C\rightarrow \C$ is a polynomial of order $\geq 1$, we have
            \begin{equation}\label{polynomialestimate}
				\displaystyle\sup_{\substack{\vert U- \tilde U|\leq \sin \alpha \\U \in \Delta_+(\alpha,R)}}  \vert P(\frac{1}{\tilde U})\vert \leq \frac{C'_P}{\vert U \vert}
			\end{equation}
            for some $C'_P$ depending on $P$.
        \end{lemma}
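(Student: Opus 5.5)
The idea is to reduce both estimates to one geometric fact: a point $\tilde U$ within distance $\sin\alpha$ of some $U \in \Delta_+(\alpha,R)$ has modulus comparable to $|U|$.

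\textbf{Geometric comparison.} By the computation behind Figure~\ref{dessin1}, every $U\in\Delta_+(\alpha,R)$ satisfies $|U|\ge R\sin\alpha$. Hence if $|U-\tilde U|\le \sin\alpha$, then
\[
|\tilde U|\ \ge\ |U|-\sin\alpha\ \ge\ |U|\Bigl(1-\tfrac{1}{R}\Bigr)\ \ge\ \tfrac12|U|
\]
as soon as $R\ge 2$. Similarly $|\tilde U|\le |U|+\sin\alpha\le 2|U|$. So $|\tilde U|$ and $|U|$ are comparable, with constants independent of $R\ge 2$.

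\textbf{Flat term \eqref{flat2}.} Here I read ``$K_0$ flat in $U$ at infinity uniformly in $z$'' as $K_0$ being defined on a slightly larger sector $\Delta_+(\alpha',R')$ containing the $\sin\alpha$-neighbourhood of $\Delta_+(\alpha,R)$. This is the natural setting: $K_0$ comes from a Borel--Ritt realization on a bigger sector, and one may shrink the sector or enlarge $R$ to arrange it. The point $\tilde U$ lies within $\sin\alpha$ of $U$, and $\Delta_+(\alpha,R)$ sits inside $\Delta_+(\alpha,R-1)$ at distance at least $\sin\alpha$ from its boundary, so $\tilde U$ lies in the enlarged domain.

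Flatness gives, for every $m$, a constant $C_m$ with
\[
|K_0(\tilde U,\tilde z)|\le C_m\,|\tilde U|^{-(m+1)}
\]
for all $\tilde z\in\D_\rho^n$ and all $\tilde U$ in that domain. Combining this with $|\tilde U|\ge \tfrac12|U|$ gives the bound $2^{m+1}C_m|U|^{-(m+1)}$. Taking the supremum yields \eqref{flat2} with $C=2^{m+1}C_m$.

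\textbf{Polynomial term \eqref{polynomialestimate}.} Write $P(w)=\sum_{j=1}^d p_jw^j$. The comparison gives $|1/\tilde U|\le 2/|U|$, and $|U|\ge R\sin\alpha\ge 1$ for $R$ large. Therefore
\[
|P(1/\tilde U)|\le \sum_j |p_j|\,2^j|U|^{-j}\le \Bigl(\sum_j |p_j|2^j\Bigr)|U|^{-1}=:C'_P|U|^{-1}.
\]

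\textbf{Main obstacle.} The only delicate point is the domain issue in \eqref{flat2}: the statement lets $\tilde U$ range over all of $\C$. I handle this by the enlargement convention described above and by requiring $R\ge 2$ together with $R\sin\alpha\ge 1$. Both conditions are compatible with the "$R$ large enough" hypothesis of Lemma~\ref{stability}.
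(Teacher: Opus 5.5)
Your proposal is correct and is essentially the paper's argument. You use $|U|\ge R\sin\alpha$ to get $|\tilde U|\ge |U|-\sin\alpha\ge |U|(1-1/R)$, then apply flatness (respectively the polynomial bound) at $\tilde U$. The only differences are your cruder constant $2^{m+1}$ in place of the paper's $(R/(R-1))^{m+1}$, and your explicit handling of where $K_0$ is defined, which the paper treats implicitly by observing that $\tilde U\in\Delta_+(\alpha,R-1)$.
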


   \begin{proof}     
        Condition $\vert U- \tilde U \vert  \leq \sin \alpha$ implies that $\tilde U \in \Delta_+(\alpha,R-1)$ and $\vert U \vert \geq R \sin \alpha$ (see Figures \ref{sinalpha} and \ref{dessin1}). Then for all $m \in \N$
			\[ \vert K_0(\tilde U,\tilde z) \vert\leq \frac{c}{\vert U\vert^{m+1}} \leq \frac{c}{(\vert U \vert - \sin \alpha)^{m+1}}.\]
\begin{figure} 
    \tikzset{every picture/.style={line width=0.75pt}} 

\begin{tikzpicture}[x=0.75pt,y=0.75pt,yscale=-1,xscale=1]

\draw  [draw opacity=0][fill={rgb, 255:red, 184; green, 233; blue, 134 }  ,fill opacity=0.36 ] (450,60) -- (450,240) -- (300,240) -- (370,150) -- (370,150) -- (300,60) -- cycle ;
\draw    (160,150) -- (467,150) ;
\draw [shift={(470,150)}, rotate = 180] [fill={rgb, 255:red, 0; green, 0; blue, 0 }  ][line width=0.08]  [draw opacity=0] (8.93,-4.29) -- (0,0) -- (8.93,4.29) -- cycle    ;
\draw    (212,150) ;
\draw [shift={(212,150)}, rotate = 0] [color={rgb, 255:red, 0; green, 0; blue, 0 }  ][fill={rgb, 255:red, 0; green, 0; blue, 0 }  ][line width=0.75]      (0, 0) circle [x radius= 3.35, y radius= 3.35]   ;
\draw [shift={(212,150)}, rotate = 0] [color={rgb, 255:red, 0; green, 0; blue, 0 }  ][fill={rgb, 255:red, 0; green, 0; blue, 0 }  ][line width=0.75]      (0, 0) circle [x radius= 3.35, y radius= 3.35]   ;
\draw    (240,60) -- (310,150) ;
\draw [shift={(310,150)}, rotate = 52.13] [color={rgb, 255:red, 0; green, 0; blue, 0 }  ][fill={rgb, 255:red, 0; green, 0; blue, 0 }  ][line width=0.75]      (0, 0) circle [x radius= 3.35, y radius= 3.35]   ;
\draw    (240,240) -- (310,150) ;
\draw [shift={(310,150)}, rotate = 307.87] [color={rgb, 255:red, 0; green, 0; blue, 0 }  ][fill={rgb, 255:red, 0; green, 0; blue, 0 }  ][line width=0.75]      (0, 0) circle [x radius= 3.35, y radius= 3.35]   ;
\draw    (300,60) -- (370,150) ;
\draw [shift={(370,150)}, rotate = 52.13] [color={rgb, 255:red, 0; green, 0; blue, 0 }  ][fill={rgb, 255:red, 0; green, 0; blue, 0 }  ][line width=0.75]      (0, 0) circle [x radius= 3.35, y radius= 3.35]   ;
\draw    (300,240) -- (370,150) ;
\draw [shift={(370,150)}, rotate = 307.87] [color={rgb, 255:red, 0; green, 0; blue, 0 }  ][fill={rgb, 255:red, 0; green, 0; blue, 0 }  ][line width=0.75]      (0, 0) circle [x radius= 3.35, y radius= 3.35]   ;
\draw  [draw opacity=0] (351.53,126.58) .. controls (356.07,123.02) and (361.69,120.73) .. (367.88,120.29) .. controls (384.41,119.12) and (398.76,131.57) .. (399.93,148.1) .. controls (399.98,148.79) and (400,149.48) .. (400,150.16) -- (370,150.22) -- cycle ; \draw    (354.01,124.83) .. controls (358.06,122.28) and (362.77,120.66) .. (367.88,120.29) .. controls (384.41,119.12) and (398.76,131.57) .. (399.93,148.1) .. controls (399.98,148.79) and (400,149.48) .. (400,150.16) ;  \draw [shift={(351.53,126.58)}, rotate = 333.44] [fill={rgb, 255:red, 0; green, 0; blue, 0 }  ][line width=0.08]  [draw opacity=0] (10.72,-5.15) -- (0,0) -- (10.72,5.15) -- (7.12,0) -- cycle    ;
\draw  [draw opacity=0] (400,150.16) .. controls (399.98,155.93) and (398.29,161.76) .. (394.79,166.89) .. controls (385.47,180.58) and (366.81,184.12) .. (353.11,174.79) .. controls (352.54,174.41) and (351.99,174) .. (351.45,173.58) -- (370,150) -- cycle ; \draw    (400,150.16) .. controls (399.98,155.93) and (398.29,161.76) .. (394.79,166.89) .. controls (385.47,180.58) and (366.81,184.12) .. (353.11,174.79) ; \draw [shift={(351.45,173.58)}, rotate = 26.7] [fill={rgb, 255:red, 0; green, 0; blue, 0 }  ][line width=0.08]  [draw opacity=0] (10.72,-5.15) -- (0,0) -- (10.72,5.15) -- (7.12,0) -- cycle    ; 
\draw    (289.2,120.1) -- (323.38,95.18) ;
\draw [shift={(325,94)}, rotate = 143.9] [color={rgb, 255:red, 0; green, 0; blue, 0 }  ][line width=0.75]    (10.93,-4.9) .. controls (6.95,-2.3) and (3.31,-0.67) .. (0,0) .. controls (3.31,0.67) and (6.95,2.3) .. (10.93,4.9)   ;
\draw [shift={(287.59,121.28)}, rotate = 323.9] [color={rgb, 255:red, 0; green, 0; blue, 0 }  ][line width=0.75]    (10.93,-4.9) .. controls (6.95,-2.3) and (3.31,-0.67) .. (0,0) .. controls (3.31,0.67) and (6.95,2.3) .. (10.93,4.9)   ;

\draw (372,153.62) node [anchor=north west][inner sep=0.75pt]  [font=\footnotesize]  {$R$};
\draw (203,154.4) node [anchor=north west][inner sep=0.75pt]  [font=\footnotesize]  {$0$};
\draw (312,153.4) node [anchor=north west][inner sep=0.75pt]  [font=\footnotesize]  {$R-1$};
\draw (397,116.4) node [anchor=north west][inner sep=0.75pt]  [font=\footnotesize]  {$\alpha $};
\draw (391,173.4) node [anchor=north west][inner sep=0.75pt]  [font=\footnotesize]  {$-\alpha $};
\draw (386,62.4) node [anchor=north west][inner sep=0.75pt]  [color={rgb, 255:red, 65; green, 117; blue, 5 }  ,opacity=1 ]  {$ \begin{array}{l}
\Delta _{+}( \alpha ,R)\\
\end{array}$};
\draw (278.65,99.52) node [anchor=north west][inner sep=0.75pt]  [font=\small,rotate=-324.13]  {$\sin \alpha $};

\end{tikzpicture}
    \caption{Sector at infinity}
    \label{sinalpha}
\end{figure}
			Now write
			\[ \frac{c}{(\vert U \vert - \sin \alpha)^{m+1}} = \frac{1}{\vert U \vert ^{m+1}} \frac{c \vert U \vert^{m+1}}{(\vert U \vert - \sin \alpha)^{m+1}}.\]
			Notice that $t \mapsto \frac{t}{t-\sin \alpha}$ is strictly decreasing for all $t > \sin \alpha$, then
			\[ \sup_{U \in \Delta_+(\alpha,R)} \frac{\vert U \vert}{\vert U \vert - \sin \alpha} \leq \sup_{\vert U \vert \geq R \sin \alpha} \frac{\vert U \vert}{\vert U \vert - \sin \alpha}  = \frac{R}{R-1}.\]
			Then defining $C = c \displaystyle \sup_{U \in \Delta_+(\alpha,R)} \left( \frac{\vert U\vert}{\vert U \vert - \sin \alpha}\right)^{m+1} = c \left( \frac{R}{R-1} \right)^{m+1}$ 
			we obtain estimate \eqref{flat2}.
				
			If now $P$ is a polynomial in one variable of order $\geq 1$, then for all $\tilde U  \in \C^{\ast}$ with $\vert \tilde U\vert$ large enough (which will be satisfied for $\tilde U$ in the sector at infinity),
			\[\vert P\left(\frac{1}{\tilde U}\right)\vert \leq \frac{C_P}{\vert \tilde U \vert} \]
			and in the same way as above, we get \eqref{polynomialestimate},
			where $C'_P = CC_P$.
\end{proof}

\begin{proof}(of Proposition \ref{Propositionconjug})

The change of variable $U=u^{-k}$ sends $\delta_+^{[k_0]}(\alpha,r)$ to $\Delta_+(\alpha,R)$. Flat functions in $u$ at the origin become flat functions in $U$ at infinity. Abusing notation, we keep the same name for the functions (for instance, we write $\tilde L(U,z)$ for $\tilde L(U^{-\frac{1}{k}},z)$). The dynamics at infinity then become:
\begin{align}
    G_0(U) &= G_0(U^{-1/k})^{-k} = U + 1 + E(\frac{1}{U}), \label{G0inf}\\
    \tilde G_{\ast,0}(U,z) &= G_0(U) + \tilde K_0(U,z),\label{G*0}
\end{align}
where $E$ is holomorphic in a neighborhood of $0$, $E(0) = 0$ and $\vert E(1/U) \vert \leq \frac{C_E}{\vert U \vert}$ for all $U\in \Delta_+(\alpha,R)$.

\textbf{Some preliminary estimates: } \\

\textbf{(*)} By Lemma \ref{somme}, 
        \begin{equation} \label{sommeLoday}\sum_{p\geq 0} \frac{1}{\vert U+p \vert^{m+1} } \leq \frac{A_m}{\vert U \vert^m}, \quad \forall m \in \N^\ast, \forall U \in \Delta_{+}(\alpha,R), \end{equation}
        with $A_m$ depending on $\alpha, m$;

\textbf{(*)}	For $U \in \Delta_{+}(\alpha,R)$, write $U = R + t e^{i\theta}$, with $\vert \theta \vert \leq \alpha$ and $t>0$. For all $j'>0$, $U+j' = (R+j') + t e^{i\theta}\in \Delta_+(\alpha,R+j')$ and then $\vert U +j' \vert \geq (R+j') \sin \alpha$ (see Figure \ref{dessin1}). Since $R > 1$, we get
			\begin{equation}\label{minz02}
			\vert U +j' \vert \geq C_\alpha(R+j') \geq C_\alpha(1+j')
			\end{equation}
			where $C_\alpha = \sin \alpha >0$. Moreover
            \begin{align}
			 \sum_{j'=0}^j \frac{1}{\vert U +j'\vert} \leq \frac{1}{C_\alpha}\sum_{j'=0}^j \frac{1}{1+j'} = \frac{1}{C_\alpha} + \frac{1}{C_\alpha}\sum_{j'=1}^j \frac{1}{1+j'}&\nonumber\\
			  \leq \frac{1}{C_\alpha} + \frac{1}{C_\alpha}\int_{0}^j \frac{1}{1+s}ds = \frac{1}{C_\alpha}+ \frac{1}{C_\alpha}\ln(1+j) \leq M(1+\ln(1+j))  \label{sum}
			\end{align}
			where $M=C_{\alpha}^{-1}$.

\textbf{(*)} 			Let $(U,z) \in \Delta_{+}(\alpha,R) \times \D_{\rho}^n$, and $(\tilde U, \tilde z) \in \C \times \D_{\rho}^n$, and suppose that $\vert U- \tilde U \vert  \leq \sin \alpha$. 
			In exactly the same way as we obtained \eqref{flat2}, we obtain the estimate:
			\begin{equation}\label{flat2bis}
				\displaystyle\sup_{\substack{\vert U- \tilde U|\leq \sin \alpha \\(U,z) \in \Delta_{+}(\alpha,R)\times \D_{\rho}^n}}  \vert \tilde K_0(\tilde U,z)\vert \leq \frac{C}{\vert U \vert^{m+1}}.
			\end{equation}

\textbf{(*)} Let $\rho_E > 0$ be such that $E$ is holomorphic on $D_{\rho_E} = \{ \vert w \vert \leq \rho_E\}$. Since $E(0)=0$, there exists $\eta >0$ such that
\[\vert E(w)\vert \leq \eta\vert w\vert \]
and
\[\vert E(w_1)-E(w_2)\vert \leq \eta \vert w_1-w_2\vert\]
for all $w,w_1,w_2 \in D_{\frac{\rho_E}{2}}$. By definition, we have for all $U$,
\[ G_0^{(p)}(U) = U + p + \sum_{j=0}^{p-1}E\left(\frac{1}{G_0^{(j)}(U)}\right)\]
then for all $p \geq 0$,
\[ \vert G_0^{(p)}(U)-(U+p)\vert = \left\vert \sum_{q=0}^{p-1}E\left(\frac{1}{G_0^{(q)}(U)}\right)\right\vert.\]
By Lemma \ref{stability}, since $G_0^{(q)}(U) \in \Delta_+(\alpha,R)$ for all $q$, $\vert \frac{1}{G_0^{(q)}(U)}\vert \leq \frac{1}{R\sin \alpha}$, then
\[ \left\vert E\left(\frac{1}{G_0^{(q)}(U)}\right)\right\vert \leq \eta  \left\vert \frac{1}{G_0^{(q)}(U)} \right\vert \leq \frac{\eta}{R\sin \alpha}\]
if $R \geq \frac{2}{\rho_E \sin\alpha}$ so that $ \frac{1}{G_0^{(q)}(U)} \in D_{\frac{\rho_E}{2}}$, and then
\[ \vert G_0^{(p)}(U)-(U+p)\vert  \leq p \frac{\eta}{R\sin \alpha}. \]

Since $U+p\in \Delta_+(\alpha, R+p)$, $\vert U+p \vert \geq (R+p)\sin \alpha$ and
\[\vert G_0^{(p)}(U) \vert \geq \vert U+p \vert  - \vert G_0^{(p)}(U)-(U+p)\vert \geq \vert U+p \vert - p \frac{\eta}{R\sin \alpha}. \]
Now
\[\frac{p \frac{\eta}{R\sin \alpha}}{(R+p)\sin\alpha}  = \frac{p \eta}{R(R+p)\sin^2\alpha} \leq \frac{1}{4}\]
if $R \geq \frac{4\eta}{\sin^2\alpha}$
which gives
\begin{equation}\label{estimU+p}
    \vert G_0^{(p)}(U) \vert \geq \vert U+p\vert - \frac{1}{4}\vert U +p\vert \geq \frac{3}{4}\vert U+p\vert.
\end{equation}

\textbf{Control of the iterates:}

Recalling notations \eqref{defproj}, we shall prove by induction on $p\geq 0$, that for all $(U,z) \in \Delta_{+}(\alpha,R)\times \D_{\rho}^n$,
\begin{equation}\label{controleG2} \varepsilon_p(U,z) :=\vert \tilde G_{\ast,0}^{(p)}(U,z)-G_0^{(p)}(U) \vert \leq \sin \alpha\end{equation}

    \underline{p=0,1:} $\varepsilon_0(U,z) =0 \leq \sin \alpha$ trivially. By definition, since $\tilde K_0(U,z)$ is flat at infinity, $\vert \tilde G_{\ast,0}(U,z) - G_0(U) \vert = \vert \tilde K_0(U,z)\vert \leq \frac{C}{\vert U \vert^{m+1}} \leq \frac{C}{(R\sin \alpha)^{m+1}} \leq \sin \alpha$ for $R$ large enough;
    
	 \underline{Iteration:} By \eqref{sommeLoday}, we have
     \[  C\sum_{q=0}^{p-1} \frac{1}{\vert U+q\vert^{m+1}} \leq \frac{A_mC}{\vert U\vert^m} \leq \frac{A_mC}{(R\sin\alpha)^m} \leq \sin \alpha \]
 for $R$ large enough. Assuming that \eqref{controleG2} holds for $l \leq p$ for some $p \geq 0$, then by \eqref{estimU+p} and since for $R \geq 4$, $\vert U +p\vert \geq R \sin \alpha \geq 4 \sin \alpha$,
       \begin{equation}\label{estimG}\vert \tilde G_{\ast,0}^{(p)}(U,z) \vert \geq \vert G_0^{(p)}(U) \vert - \sin \alpha \geq \frac{3}{4}\vert U+p \vert - \frac{1}{4}\vert U +p\vert = \frac{1}{2}\vert U +p\vert.\end{equation}
    From Lemma \ref{stability}, under conditions \eqref{H_0} and \eqref{H_1} we have $(\tilde G_{\ast,0}^{(p)}(U,z),G_\ast^{(p)}(U,z)) \in \Delta_{+}(\alpha,R) \times \D_\rho^n$, then applying \eqref{flat2bis} with $\tilde G_{\ast,0}^{(p)}(U,z)$ for $\tilde U$ and $G_0^{(p)}(U)$ for $U$, we get
    \[ \vert \tilde K_0(\tilde G_{\ast,0}^{(p)}(U,z),G_\ast^{(p)}(U,z)) \vert \leq \frac{C'}{\vert G_0^{(p)}(U)\vert^{m+1}} \leq  \frac{C}{\vert U+p \vert^{m+1}}.\]
  Using that for all $w_1,w_2\in D_{\frac{\rho_E}{2}}$, $\vert E(w_1)-E(w_2) \vert\leq \eta\vert w_1-w_2\vert$, and from previous estimates and iteration assumption, 
   \begin{align*} \left\vert E\left(\frac{1}{\tilde G_{\ast,0}^{(p)}(U,z)}\right)-E\left(\frac{1}{G_0^{(p)}(U)}\right) \right\vert & \leq \eta \left\vert \frac{1}{\tilde G_{\ast,0}^{(p)}(U,z)}-\frac{1}{G_0^{(p)}(U)} \right\vert = \eta \left\vert\frac{\tilde G_{\ast,0}^{(p)}(U,z)-G_0^{(p)}(U)}{\tilde G_{\ast,0}^{(p)}(U,z) G_0^{(p)}(U)} \right\vert \\
   &\leq \eta \frac{8}{3} \frac{1}{\vert U+p\vert^2}\vert \tilde G_{\ast,0}^{(p)}(U,z)- G_0^{(p)}(U)\vert = \eta \frac{8}{3} \frac{1}{\vert U+p\vert^2} \varepsilon_p(U,z).
   \end{align*}
  Since $\vert G_0^{(p)}(U)\vert\geq\frac{3}{4}\vert U+p\vert$ by \eqref{estimU+p} and $\vert \tilde G_{\ast,0}^{(p)}(U,z)\vert\geq\frac{1}{2}\vert U+p\vert$ by \eqref{estimG}, both being $\geq\frac{1}{2}R\sin\alpha$, then $\frac{1}{\tilde G_{\ast,0}^{(p)}(U,z)}$ and $\frac{1}{G_0^{(p)}(U)}$ are in $D_{\frac{\rho_E}{2}}$ for $R$ large enough. Moreover, by Lemma \ref{stability} and Remark \ref{remarquestability}, $\tilde G_{\ast,0}^{(p)}(U,z)\in\Delta_+(\alpha,R)$ and $G_0^{(p)}(U)\in\Delta_+(\alpha,R)$, then
  \[ G_0^{(p+1)}(U)= G_0(G_0^{(p)}(U)) = 1 + G_0^{(p)}(U)+E(\frac{1}{G_0^{(p)}(U)}),\]
  and
  \begin{align*} \tilde G_{\ast,0}^{(p+1)}(U,z) &= G_0(\tilde G_{\ast,0}^{(p)}(U,z)) + \tilde K_0(\GG^{(p)}(U,z)) \\
  & = 1 + \tilde G_{\ast,0}^{(p)}(U,z) + E(\frac{1}{\tilde G_{\ast,0}^{(p)}(U,z)}) + \tilde K_0(\GG^{(p)}(U,z))
  \end{align*}
  which gives, by estimates above and applying \eqref{flat2bis},
  \begin{align*}
      \varepsilon_{p+1}(U,z) & =  \vert \tilde G_{\ast,0}^{(p+1)}(U,z) - G_0^{(p+1)}(U) \vert \\
      & = \left\vert [\tilde G_{\ast,0}^{(p)}(U,z)-G_0^{(p)}(U)] + \left[E\left(\frac{1}{\tilde G_{\ast,0}^{(p)}(U,z)}\right) -E\left(\frac{1}{G_0^{(p)}(U)}\right)\right] + \tilde K_0(\GG^{(p)}(U,z)) \right\vert \\
      &  \leq \varepsilon_p(U,z) + \eta \frac{8}{3}\frac{1}{\vert U +p \vert^2}\varepsilon_p(U,z) + \frac{C}{\vert U+p\vert^{m+1}}
  \end{align*}
  which reads 
   \[ \varepsilon_{p+1}(U,z) \leq  (1+\gamma_p(U))\varepsilon_p(U,z) + \frac{C}{\vert U+p\vert^{m+1}} \]
with $\gamma_p(U) := \frac{8\eta}{3\vert U+p\vert^2}$. Since $\varepsilon_0(U,z) = 0$, we obtain by iterating:
\[  \varepsilon_{p+1}(U,z)  \leq C \sum_{q=0}^{p}\big(\prod_{l=q+1}^{p}(1+\gamma_l(U))\big)\frac{1}{\vert U+q\vert^{m+1}}. \]
Since 
\[ \prod_{l=q+1}^{p}(1+\gamma_l(U)) \leq e^{\ln (\prod_{l=0}^{\infty}(1+\gamma_l(U)))} \leq e^{\sum_{l\geq 0}\gamma_l(U)}\] and by Lemma \ref{somme} with $m=1$, 
\[\sum_{l\geq 0}\gamma_l(U) = \frac{8\eta}{3}\sum_{l\geq 0}\frac{1}{\vert U+l\vert^{2}} \leq \frac{8\eta A_1}{3\vert U\vert},\]
so $\prod_{l=0}^{\infty}(1+\gamma_l(U)) \leq e^{\frac{8\eta A_1}{3\vert  U \vert}} \leq e^{\frac{8 \eta A_1}{3R\sin \alpha}} =: T$, which is independent of $U$ and $p$, and finally,
\[  \varepsilon_{p+1}(U,z) \leq CT\sum_{q=0}^{p}\frac{1}{\vert U+q\vert^{m+1}} \leq  A_mCT \frac{1}{\vert U \vert^m} \leq \frac{A_mCT}{(R\sin \alpha)^m} \leq \sin \alpha \] for $R$ large enough, which proves \eqref{controleG2}.

\textbf{Tentative solutions:} Equation \eqref{equaphi2} at infinity reads, for component $i=1,...,n$,
\[ \tilde \varphi_i(U,z) = \frac{\tilde \varphi_i(\GG(U,z))}{\Lambda_i(U,z)} -\frac{\tilde L_i(U,z)}{\Lambda_i(U,z)}\]
and writing recursively, its solution would satisfy
\begin{equation}\label{candidattotal}\tilde \varphi_i(U,z) = \frac{\tilde \varphi_i(\GG^{(p)}(U,z))}{\prod_{l=0}^{p-1}\Lambda_i(\GG^{(l)}(U,z))} - \sum_{j=0}^{p-1}\frac{\tilde L_i(\GG^{(j)}(U,z))}{\prod_{l=0}^{j}\Lambda_i(\GG^{(l)}(U,z))}.\end{equation}
Hence, the tentative solution is given by
\begin{equation}\label{solutioncandidatei} \tilde \varphi_i(U,z) = - \sum_{j\geq 0}\frac{\tilde L_i(\GG^{(j)}(U,z))}{\prod_{l=0}^{j}\Lambda_i(\GG^{(l)}(U,z))}.\end{equation}
For the component $i=0$, the tentative solution is simply 
\begin{equation} \tilde \varphi_0 (U,z)= - \sum_{j\geq 0} \tilde L_0(\GG^{(j)}(U,z)) \label{solutioncandidate0}.\end{equation}
We shall prove that the infinite sums \eqref{solutioncandidatei} and \eqref{solutioncandidate0} define holomorphic functions in suitable domains. \\

\textbf{Convergence of the sum: }
Let us first investigate the case $i=0$. Since $\tilde L_0 \circ \GG^{(j)}$ is holomorphic for all $j \geq 0$, so is the sum $\sum_{j\geq 0} \tilde L_0 \circ \GG^{(j)}$ on $\Delta_{+}(\alpha,R)\times \D_\rho^n$. Moreover, since $\tilde L_0$ is flat, and applying \eqref{flat2bis} with $\tilde G_{\ast,0}^{(j)}(U,z)$ for $\tilde U$ and $G_0^{(j)}(U)$ for $U$ (thanks to \eqref{controleG2}), from Lemma \ref{stability} and according to Lemma \ref{somme} we get 
\begin{equation} \vert \sum_{j\geq 0}\tilde L_0(\GG^{(j)}(U,z)) \vert 
\leq \sum_{j\geq 0}\frac{C}{\vert U+j\vert^{m+1}}\leq \frac{A_mC}{\vert U \vert^m}\label{estimflat}\end{equation}
and then $\tilde \varphi_0$ is flat in $U$ uniformly in $z$.

We now investigate the case of component $i=1,...,n$. Write $\Lambda_i(U,z) = \lambda_i + d_i(U,z)$. Then 
\[\prod_{l=0}^j \Lambda_i(\GG^{(l)}(U,z)) = \lambda_i^{j+1}\prod_{l=0}^j(1+a_l(U,z))\]
where $a_l(U,z) = \lambda_i^{-1}d_i(\GG^{(l)}(U,z))$. Then from \eqref{estimG} and under condition \eqref{H_0},  $\vert a_l(U,z) \vert \leq \frac{C}{\vert \tilde G_{\ast,0}^{(l)}(U,z)  \vert}$ and we can assume that $\vert a_l(U,z)\vert \leq \frac{1}{2}$ so that $\vert 1 + a_l(U,z)\vert > 0$. Then
		\begin{align}
				\vert \prod_{l=0}^j (1+a_{l}(U,z))\vert & = \vert e^{\ln \prod _{l=0}^j(1+a_{l}(U,z))} \vert = \vert e^{\sum _{l=0}^j\ln (1+a_{l}(U,z))}  \vert = e^{\RE \big( \sum _{l=0}^j\ln (1+a_{l}(U,z))\big)} \nonumber\\
				& > e^{- \vert \sum _{l=0}^j\ln (1+a_{l}(U,z))\vert} 
				 > e^{-2 \sum_{l=0}^j \vert a_{l}(U,z)\vert } 
				 > e^{-2 \sum_{l=0}^j \frac{C'}{\vert U +l\vert} }\label{prod-min}
			\end{align} 
			where $C'=2C$ coming from \eqref{estimG}.
		Therefore, using \eqref{minz02} we obtain
			\[ 	\vert \prod_{l=0}^j (1+a_{l}(U,z))\vert  > e^{\frac{-2C'}{C_\alpha} \sum_{l=0}^j \frac{1}{1+l}}\]
          \[> e^{- \frac{4C'}{C_\alpha} \ln(1+j)} = \frac{1}{(1+j)^{M'}}\]
			with $M'=\frac{4C'}{C_\alpha}$, therefore 
            \begin{equation}\label{estimdeno} \vert\prod_{l=0}^j \Lambda_i(\GG^{(l)}(U,z)) \vert \geq (1+j)^{-M'}.\end{equation}

            Since $\tilde L_i$ is flat 
            , thanks to Lemma \ref{stability} and in the same way as \eqref{estimflat} we get for all $N\geq 0$
            \[ \vert \tilde L_i (\GG^{(j)}(U,z))\vert \leq \frac{C}{\vert \tilde G_{\ast,0}^{(j)}(U,z)\vert^{N+1}} \leq \frac{C}{\vert U+j \vert^{N+1}},\]
and moreover
\[\vert \sum_{j\geq 0}\frac{\tilde L_i(\GG^{(j)}(U,z))}{\prod_{l=0}^{j}\Lambda_i(\GG^{(l)}(U,z))} \vert \leq \sum_{j\geq 0}\frac{C}{\vert U+j\vert^N}(1+j)^{M'}  \leq \sum_{j\geq 0} \frac{\tilde C'}{\vert U+j\vert^{N-M'}} \]
where $\tilde C' = \frac{C}{C_\alpha ^{M'}}$. For any $m\geq 1$, choose $N\geq m + M' +1$ so that 
\[\vert \sum_{j\geq 0}\frac{\tilde L_i(\GG^{(j)}(U,z))}{\prod_{l=0}^{j}\Lambda_i(\GG^{(l)}(U,z))} \vert \leq \sum_{j\geq 0} \frac{\tilde C'}{\vert U+j\vert^{m+1}} \]
which proves that the series converges uniformly on compact sets of $\Delta_{+}(\alpha,R)\times \D_\rho^n$ and then $\tilde \varphi_i$ is holomorphic and from $\eqref{sommeLoday}$, $\vert \tilde \varphi_i(U,z) \vert \leq \frac{A_m \tilde C'}{\vert U \vert^{m}}$. As $m$ is chosen arbitrary, this proves that $\tilde \varphi_i \in \B^\infty$. 
Finally, let us check that $\tilde \varphi$ satisfies \eqref{equaphi2}. Since $\GG^{(j)}(\GG(U,z)) = \GG^{(j+1)}(U,z)$, we have
\[ \tilde \varphi_i(\GG(U,z)) = - \sum_{j\geq 1}\frac{\tilde L_i(\GG^{(j)}(U,z))}{\prod_{l=1}^{j}\Lambda_i(\GG^{(l)}(U,z))},\]
hence
\[ \frac{\tilde \varphi_i(\GG(U,z))}{\Lambda_i(U,z)} = - \sum_{j\geq 1}\frac{\tilde L_i(\GG^{(j)}(U,z))}{\prod_{l=0}^{j}\Lambda_i(\GG^{(l)}(U,z))} = \tilde \varphi_i(U,z) + \frac{\tilde L_i(U,z)}{\Lambda_i(U,z)}.\]
 Multiplying by $\Lambda_i(U,z)$ gives \eqref{equaphi2}. The case $i=0$ is identical with $\Lambda_0 \equiv 1$.
Finally, $\tilde \varphi = (\tilde \varphi_0,...,\tilde \varphi_n)$  satisfies conjugacy equation \eqref{equaphi2} (in the variable $U$ at infinity).

\textbf{Uniqueness:} Let $\psi \in (\B^\infty)^{n+1}$, $\psi = (\psi_0,...,\psi_n)$ be a solution of
\[\psi (\GG(U,z)) = \psi(U,z)\Lambda(U,z),\]
then by iteration, for all $i=0,...,n$
\[ \psi_i(U,z) = \frac{\psi_i(\GG^{(p)}(U,z))}{\prod_{l=0}^{p-1}\Lambda_i(\GG^{(l)}(U,z))}.\]
Since $\psi_i$ is flat and from estimate \eqref{estimG}, using again \eqref{flat2bis} and Lemma \ref{stability}, we get for all $N$ 
\[ \vert \psi_i(\GG^{(p)}(U,z))\vert = \vert \psi_i(\tilde G^{(p)}_{\ast,0}(U,z),\tilde G^{(p)}_{\ast}(U,z))\vert 
\leq \frac{C_N}{\vert U+p\vert^N}\]
and by \eqref{estimdeno}, 
\[\vert\prod_{l=0}^{p-1} \Lambda_i(\GG^{(l)}(U,z)) \vert \geq p^{-M'}.\]
Then for all $(U,z) \in \Delta_{+}(\alpha,R) \times \D_\rho^n$, and choosing $N$ large enough,
\[ \vert \psi_i(U,z) \vert \leq \frac{C''}{\vert U +p\vert^{N-M' }}\underset{p \rightarrow\infty}{\rightarrow} 0\]
so that $\psi \equiv0$ in the same domain. Hence, if $\tilde \varphi^{(1)}$ and $\tilde \varphi^{(2)}$ are two solutions of \eqref{equaphi2} in $(\B^\infty)^{n+1}$, their difference is a flat solution of the homogeneous equation, hence vanishes identically, which proves uniqueness.

\textbf{Restriction to $\Sigma$: } Coming back to the variable $u=U^{-\frac{1}{k}}$ we constructed a unique $\tilde \varphi$ satisfying conjugacy equation  \eqref{equaphi2}, flat at the origin in the variable $u\in \delta_+^{[k_0]}(\alpha,r)$ uniformly in the variable $z\in \D_\rho^n$. 
By construction, $\tilde G_\ast(z^\beta,z) = G_\ast(z)$ and $\tilde G_{\ast,0}(z^\beta,z)=G_\ast(z)^\beta = G_{\ast,0}(z)$, so we have, writing $\varphi(z) := \tilde \varphi(z^\beta,z)$:
\[\tilde \varphi(\tilde G_{\ast,0}(z^\beta,z),\tilde G_\ast(z^\beta,z)) = \tilde\varphi(G_\ast(z)^\beta,G_\ast(z))=\varphi(G_\ast(z))\]
and then restricting 
\[ \tilde \varphi(\tilde G_{\ast,0}(u,z),\tilde G_\ast (u,z)) - \tilde \varphi(u,z)\Lambda(u,z) = \tilde L(u,z)\]
to $\Sigma$ gives exactly
\[\varphi(G_\ast(z))-\varphi(z)\Lambda(z^\beta,z) = \tilde L(z^\beta,z)\]
for all $z\in \D_\rho^n$ such that $z^\beta \in \delta_+^{[k_0]}(\alpha,r)$.

Finally, this restricted solution is unique. Indeed, since $\tilde G_\ast(z^\beta,z)=G_\ast(z)$ and $\tilde G_{\ast,0}(z^\beta,z)=G_{\ast,0}(z)$, we have $\GG^{(l)}(z^\beta,z)=(G_{\ast,0}^{(l)}(z),G_\ast^{(l)}(z))$ for all $l\geq 0$, and the iterates of $\GG$ are in $\Sigma$ so that the uniqueness is proved the same way as above.
\end{proof}

\begin{remark}
    \label{kplusgrand}
Theorem \ref{main-sect} still holds if one only assumes $\ord_0 Q_i \geq k$ for $i=m+1,\ldots,n$ ($\ord_0 P_i \geq k$ in this section). Indeed, if $a_{i,1}=0$, then $\Lambda_i(u,z) = \lambda_i + \OO(u^{k+1})$, so that
\[\ln \vert \Lambda_i(u_l,z_l)\vert = \OO(l^{-1-\frac{1}{k}}),\]
and the series $\sum_l \ln\vert \Lambda_i(u_l,z_l)\vert$ converges absolutely so that estimate \eqref{estimproduit} is satisfied and no condition on $\nu_i$ is needed for such an $i$. Moreover, the product $\prod_{l=1}^{p-1}\vert \Lambda_i(u_l,z_l)\vert$ is bounded from below by some positive constant uniformly in $p$. Then, the estimates \eqref{estimdeno} and the ones following are simplified and the result still holds.
\end{remark}

\subsection{Sectorial conjugacy}
We will construct in this section the conjugacy to the normal form, on each sector. \\

\textbf{Resonant Monomial conjugacy.}
Let $r,R,\alpha$ and $k_0$ be as defined before. Let
\[\s_+^{[k_0]} = \{(u,z)=(u,z_1,...,z_n) \in \delta^{[k_0]}_+(\alpha, r)\times (\D_\rho\setminus\{0\})^{m}\times  \D_\rho^{n-m}\}.\]

We want a holomorphic map $\phi_+ : \s_+^{[k_0]} \rightarrow \C$ of the form $\phi_+(u,z) = u +  \varphi_{0,+}(u,z)$ with $ \varphi_{0,+} \in \B^\infty$ (so that $\phi_+$ is a flat perturbation of the identity at the origin in $u$), satisfying the conjugacy equation
\[ \phi_+(\tilde G_{\ast,0}(u,z), \tilde G_\ast(u,z)) = G_0(\phi_+(u,z)), \]
which is equivalent to
\[ \varphi_{0,+}(\tilde G_{\ast,0}(u,z), \tilde G_\ast(u,z)) - \varphi_{0,+}(u,z) = -\tilde K_0(u,z) + Q(u,\varphi_{0,+}(u,z)), \]
where $Q(u,v) = \sum_{i\geq 0,j\geq1, i+j \leq 2k\vert \beta\vert +1 }r_{i,j}u^iv^j$ and we cannot apply Proposition \ref{Propositionconjug} since the right side of the equation depends on $\varphi_{0,+}$. Going back to infinity, up to a dilation, we continue abusing  notations and use the definitions of $G_0$ and $\tilde G_{\ast,0}$ introduced in \eqref{G0inf}, \eqref{G*0}. We want $\phi_+(U,z) = U + \varphi_{0,+}(U,z)$ defined on $\Delta_+(\alpha,R)\times \D_\rho^{n}$, with $\varphi_{0,+}$ flat at infinity in $U$ uniformly in $z$, satisfying

\begin{equation}\label{relphi+} \phi_+ (\tilde G_{\ast,0}(U,z),\tilde G_\ast(U,z)) = G_0 (\phi_+(U,z)).\end{equation}
This equation is equivalent to 
\begin{equation} \varphi_{0,+}(\GG(U,z))-\varphi_{0,+}(U,z) = - \tilde K_0(U,z) + Q(U,\varphi_{0,+}(U,z)) \label{equationmonom}\end{equation}
where more precisely $Q(U,v) := E((U+v)^{-1})-E(U^{-1})$. We will use a fixed point result to solve this equation. 

\textbf{(*)} For $s\geq 2$, define
\[ E_s := \{ f \in \mathcal{H}(\Delta_+(\alpha,R)\times \D_\rho^n), \Vert f \Vert_s:= \sup_{(U,z)\in \Delta_+(\alpha,R)\times \D_\rho^n} \vert f(U,z)\vert \cdot\vert U \vert^s < \infty  \}\]
(where $\mathcal{H}$ stands for holomorphic functions). Then $(E_s, \Vert \cdot\Vert_s)$ is a Banach space, and since $ \vert U \vert \geq R \sin \alpha$ on $\Delta_+(\alpha,R)$, the inclusion $\iota : E_{s+1} \rightarrow E_s$ is continuous with $\Vert f \Vert_s \leq \frac{1}{R \sin \alpha}\Vert f \Vert_{s+1}$. Notice that $f$ is flat at infinity if and only if $f \in E_s$ for all $s \geq 2$.

\textbf{(*)} $Q(U,v) = E((U+v)^{-1})-E(U^{-1})$ satisfies $Q(U,0)=0$. Given $U$, define $g: v \mapsto E((U+v)^{-1})$. Then $g'(v) = - \frac{E'((U+v)^{-1})}{(U+v)^2}$ and if $\vert v \vert \leq \frac{\vert U \vert}{2}$, $\vert U + v \vert \geq \frac{\vert U \vert}{2}$ and then $\frac{1}{\vert U + v\vert^2} \leq \frac{4}{\vert U \vert^2}$. Moreover, $\vert (U+v)^{-1}\vert\leq \frac{2}{\vert U \vert} \leq \frac{\rho_E}{2}$ for $R$ large enough, so that $\vert E'((U+v)^{-1})\vert$ is bounded by some constant depending only on $E$. Then for all $v_1, v_2 \in \{v, \vert v\vert \leq \frac{\vert U\vert}{2}\}$, integrating $g'$ along $[v_1,v_2]$, we have
\[ \vert Q(U,v_1)-Q(U,v_2)\vert = \vert g(v_1)-g(v_2)\vert \leq \frac{c_E}{\vert U \vert^2}\vert v_1 - v_2 \vert \]
where $c_E$ depends on $E$. In particular,
\begin{equation}\label{estimQ} \vert Q(U,v) \vert \leq \frac{c_E}{\vert U \vert^2}\vert v\vert 
\end{equation}

\textbf{(*)} Proposition \ref{Propositionconjug} with $i=0$ and $\Lambda_0 \equiv 1$ gives tentative solution \eqref{solutioncandidate0}. Define the operator $S: E_{s+1} \rightarrow E_s$ by
\[ S(L)(U,z) := - \sum_{j\geq 0}L(\GG^{(j)}(U,z))\]
which for a function $L \in E_{s+1}$ gives a unique solution $S(L)\in E_s$ of the equation
\[S(L)(\GG(U,z)) - S(L)(U,z) = L(U,z).\]
Given $L \in E_{s+1}$, $\vert L(U,z)\vert\leq \frac{\Vert L \Vert_{s+1}}{\vert U \vert^{s+1}}$ and from estimate \eqref{estimG},
\[ \vert L(\GG^{(j)}(U,z))\vert \leq \frac{\Vert L \Vert_{s+1}}{\vert \tilde G_{\ast,0}^{(j)}(U,z) \vert^{s+1}} \leq 2^{s+1}\frac{\Vert L \Vert_{s+1}}{\vert U+j \vert^{s+1}} \]
then
\[\vert S(L)(U,z) \vert \leq \sum_{j\geq 0}2^{s+1}\frac{\Vert L \Vert_{s+1}}{\vert U+j \vert^{s+1}} \leq 2^{s+1}A_s \frac{\Vert L \Vert_{s+1}}{\vert U \vert^{s}}\]
where $A_s$ is given by Lemma \ref{somme}. Finally \begin{equation}\Vert S(L)\Vert_{s} \leq K_s \Vert L \Vert_{s+1} \label{defK_s}\end{equation} where $K_s = 2^{s+1}A_s$.

\textbf{(*)} Given $\psi \in E_s$ such that $\Vert \psi \Vert_s \leq \frac{1}{2}(R\sin \alpha)^{s+1}$ then for all $(U,z)\in \Delta_+(\alpha,R)\times \D_{\rho}^n$, $\vert \psi(U,z)\vert  \leq \frac{1}{2}\vert U\vert$. Indeed, if $\Vert \psi \Vert_s \leq \frac{(R\sin \alpha)^{s+1}}{2} \leq \frac{\vert U \vert^{s+1}}{2}$ then $\frac{\Vert \psi \Vert_s}{\vert U \vert^s} \leq \frac{\vert U \vert}{2} \Rightarrow \vert \psi(U,z)\vert \leq \frac{\vert U \vert}{2}$. Applying \eqref{estimQ}, we have
\[ \vert Q(U,\psi(U,z))\vert \leq \frac{c_E}{\vert U \vert^2} \vert \psi(U,z)\vert \leq \frac{c_E}{\vert U\vert^{s+2}} \Vert \psi\Vert_s.\]
Then $Q(\cdot, \psi) \in E_{s+2}$ with
\begin{equation}\Vert Q (\cdot, \psi)\Vert_{s+2} \leq c_E \Vert \psi \Vert_s.\label{QdansEs+2}\end{equation}
Finally, given $b \leq \frac{1}{2}(R\sin \alpha)^{s+1}$, we define 
\[  \bar B_b = \{ \psi \in E_s, \Vert \psi \Vert_s \leq b \}.\]
By \re{estimQ}, we have for all $\psi_1, \psi_2 \in \bar B_b \subseteq E_s$
\begin{equation*} \Vert Q(\cdot, \psi_1) - Q(\cdot, \psi_2)\Vert_{s+2} \leq c_E \Vert \psi_1 - \psi_2 \Vert _s. \end{equation*}
More generally, by \re{estimQ}, for all $\psi_1,\psi_2\in E_s$ satisfying $\vert\psi_j(U,z)\vert\leq\frac{1}{2}\vert U\vert$ on $\Delta_+(\alpha,R)\times\D_\rho^n$ (which is in particular the case if $\psi_1,\psi_2\in\bar B_b$), we have
\begin{equation}\label{estimQpsi} \Vert Q(\cdot, \psi_1) - Q(\cdot, \psi_2)\Vert_{s+2} \leq c_E \Vert \psi_1 - \psi_2 \Vert _s. \end{equation}

\textbf{(*)} Let's define the operator $T$ defined on $E_s$ by~:
\[ T(\psi) := S(-\tilde K_0 + Q(\cdot, \psi)),\quad\forall \psi \in E_s.\]
Since $\psi \in E_s$, we have $Q(\cdot, \psi) \in E_{s+2}$ and then $S(-\tilde K_0 + Q(\cdot, \psi)) \in E_{s+1}$. From the inclusion $\iota: E_{s+1} \rightarrow E_s$, we can view $T$ as an operator taking values in $E_s$. Notice that a fixed point of $T$ in $E_s$ satisfies \eqref{equationmonom}. 

Given $s = 2$, let $c_0 := K_2 \Vert \tilde K_0 \Vert_{3}$ (where $K_2$ is defined in \eqref{defK_s}, and $\Vert \tilde K_0 \Vert_{3} < \infty$ since $\tilde K_0$ is flat), and let $b:=2 c_0$. For $\psi \in \bar B_{b} \subseteq E_2$,
\[ \Vert T(\psi) \Vert_2 = \Vert S(-\tilde K_0 + Q(\cdot, \psi)) \Vert_2\leq \Vert S(-\tilde K_0)\Vert_2 + \Vert S(Q(\cdot, \psi)) \Vert_2.\]
From previous estimates,
\[ \Vert S(-\tilde K_0)\Vert_2 \leq K_2 \Vert \tilde K_0 \Vert_{3} =c_0\]
and
\[ \Vert S(Q(\cdot, \psi))\Vert_2 \leq \frac{1}{R \sin \alpha} \Vert S(Q(\cdot,\psi))\Vert_{3} \leq \frac{K_{3}}{R\sin\alpha} \Vert Q(\cdot, \psi)\Vert_{4} \leq \frac{K_{3}c_E}{R\sin \alpha} \Vert \psi \Vert_{2} \leq \frac{K_{3} c_E}{R \sin \alpha}b.\]

For $R$ large enough so that $\frac{K_{3} c_E}{R \sin \alpha} \leq \frac{1}{2}$ and $b \leq \frac{(R\sin \alpha)^3}{2} $, 
\[ \Vert T(\psi)\Vert_2 \leq c_0 + \frac{b}{2} = c_0 + c_0 = b\]
and then $T(\bar B_{b}) \subseteq \bar B_{b}$. Moreover, by a similar reasoning, for all $\psi_1,\psi_2 \in \bar B_b$,
\[ \Vert T(\psi_1) - T(\psi_2)\Vert_{2} = \Vert S(Q(\cdot, \psi_1))-S(Q(\cdot, \psi_2))\Vert_2 \leq \frac{K_3c_E}{R\sin \alpha} \Vert \psi_1 - \psi_2\Vert_2\leq \frac{1}{2}\Vert \psi_1 - \psi_2\Vert_2\]
so that $T$ is $\frac{1}{2}$-contracting on $\bar B_b$. Then, by the Banach fixed-point theorem on $\bar B_b$, $T$ has a unique fixed point $\psi \in \bar B_b \subseteq E_2$, hence $\psi$ satisfies
\[ \psi(\GG(U,z))-\psi(U,z) = -\tilde K_0(U,z)+Q(U,\psi(U,z)).\]

\textbf{(*)} We now show by induction on $s\geq 2$ that $\psi \in E_s$. We already proved that $\psi \in E_2$. Let us suppose that $\psi \in E_s$ for some $s\geq 2$. Then since $Q(\cdot,\psi) \in E_{s+2}$ from estimate \eqref{QdansEs+2},  and since $\tilde K_0 \in E_{s+2}$ as $K_0$ is flat by assumption, we have $-\tilde K_0 + Q(\cdot, \psi) \in E_{s+2}$. Moreover, $S: E_{s+2} \rightarrow E_{s+1}$ gives $\psi = T(\psi) \in E_{s+1}$.

By induction, $\psi \in E_s$ for all $s\geq 2$ which means that $\psi$ is flat at infinity in $U$ uniformly in $z$.

Finally, we get a solution $\varphi_{0,+}$ to \eqref{equationmonom} on $\Delta_+(\alpha,R)\times \D_\rho^n$. Coming back to the variable $u$, the function $\varphi_{0,+}$ is in $\B^{\infty}$ and then, the diffeomorphism $\phi_+(u,z) = u+\varphi_{0,+}(u,z)$ satisfies
\[ \phi_+(\tilde G_{\ast,0}(u,z), \tilde G_\ast(u,z)) = G_0(\phi_+(u,z)). \]

\textbf{(*)} It remains to show the uniqueness in $\B^\infty$. Indeed, the Banach fixed-point theorem provided a solution in $\bar B_b$ and then the uniqueness is only in $\bar B_b$. \\
Let $\psi_1$ and $\psi_2$ in $\B^\infty$ two solutions of \eqref{equationmonom},
\[ \psi_1(\GG(U,z)) - \psi_1(U,z) = -\tilde K_0(U,z) + Q(U,\psi_1(U,z)),\]
\[ \psi_2(\GG(U,z)) - \psi_2(U,z) = -\tilde K_0(U,z) + Q(U,\psi_2(U,z)),\]
and then, $\chi := \psi_1-\psi_2$ 
\[ \chi(\GG(U,z)) -\chi(U,z) = Q(U,\psi_1(U,z))-Q(U,\psi_2(U,z)),\]
which can be written, letting $L(U,z) :=Q(U,\psi_1(U,z))-Q(U,\psi_2(U,z))$,
\[ \chi(\GG(U,z)) -\chi(U,z) = L(U,z).\]

We would like to apply estimate \eqref{estimQpsi} on $\psi_1,\psi_2$, which requires for $j=1,2$, $\vert \psi_j(U,z)\vert \leq \frac{\vert U \vert}{2}$. Since $\psi_1$ and $\psi_2$ are flat, they are in $E_2$, then for $j=1,2$, $\vert \psi_j(U,z)\vert \leq \frac{\Vert \psi_j\Vert_2}{\vert U \vert^2}$, which is $\leq \frac{1}{2}\vert U \vert$ if $\vert U \vert \geq R' \geq 2^\frac{1}{3}\max\big(\Vert\psi_1\Vert_2^{\frac{1}{3}},\Vert \psi_2\Vert_2^{\frac{1}{3}} \big)$. If $R'' = \max(R,\frac{R'}{\sin \alpha})$, $U \in \Delta_+(\alpha,R'')$ satisfies $\vert U \vert \geq R'' \sin \alpha \geq R'$ so we can apply \eqref{estimQpsi} on $\Delta_+(\alpha,R'')\times \D_\rho^n$. Since $\psi_1,\psi_2 \in E_2$, then $L \in E_4$ and then from \eqref{estimQpsi}, $\Vert L \Vert_4 \leq c_E \Vert \chi\Vert_2$ on $\Delta_+(\alpha,R'')\times \D_\rho^n$. Since $\chi$ is flat and satisfies $\chi \circ \GG - \chi = L$, the uniqueness of $\chi = S(L)$ follows from Proposition \ref{Propositionconjug} with $\Lambda_0 \equiv1$. From estimates \eqref{defK_s} and using the inclusion $\iota: E_3 \rightarrow E_2$,
\begin{equation}\Vert \chi \Vert_2 \leq \frac{1}{R''\sin \alpha} \Vert S(L)\Vert_3 \leq \frac{K_3}{R''\sin \alpha} \Vert L \Vert_4 \leq \frac{K_3 c_E}{R''\sin \alpha} \Vert \chi \Vert_2. \label{estimchi}\end{equation}
Since $R'' \geq R$ and since $R$ has been chosen so that $\frac{K_3 c_E}{R \sin \alpha}\leq \frac{1}{2}$, we get $\Vert \chi \Vert_2 = 0$ and then $\chi \equiv 0$ on $\Delta_+(\alpha,R'')\times \D_\rho^n$. Since $\chi \equiv 0$ on the open subset $\Delta_+(\alpha,R'')\times \D_\rho^n$ of $\Delta_+(\alpha,R)\times \D_\rho^n$ we also have that $\chi \equiv 0$ on $\Delta_+(\alpha,R)\times \D_\rho^n$. Finally, this proves that $\varphi_{0,+}$ is the unique solution of \eqref{equationmonom} in $\B^\infty$.

Restricting to $\Sigma=\{u=z^{\beta}\}$, we get a solution of
\begin{equation}\label{equ0}
\phi_+ (\tilde G_{\ast,0}(z^{\beta},z),\tilde G_\ast(z^\beta,z)) = \phi_+ ( G_{\ast,0}(z),G_\ast(z)) =G_0 (\phi_+(z^{\beta},z)).
\end{equation}

 Let us show the uniqueness of this solution is also true on $\Sigma$. Since $\tilde G_\ast(z^\beta,z)=G_\ast(z)$ and $\tilde G_{\ast,0}(z^\beta,z)=G_{\ast,0}(z)$, we have $\GG^{(j)}(z^\beta,z)=(G_{\ast,0}^{(j)}(z),G_\ast^{(j)}(z))$ for all $j\geq 0$. Then estimate \eqref{estimG} still holds (with $U=(z^\beta)^{-k}$, modulo the dilation) so that estimate \eqref{defK_s} also holds, the spaces $E_s$ and the operator $S$ being now considered on $\s_+^{[k_0]}$, that is, for functions of $z$ alone, with the same constants. \\
Let $\phi_1(z)  = z^\beta + \psi_1(z)$ and $\phi_2(z)  = z^\beta + \psi_2(z)$ two solutions of
\begin{equation}\label{equ0sigma}\phi(G_\ast(z))=G_0(\phi(z))\end{equation}
on $\s_+^{[k_0]}$, with $\psi_1$ and $\psi_2$ flat with respect to $z^\beta$. Then as in \eqref{equationmonom}, we have
\[ \psi_1(G_\ast(z))-\psi_1(z) = -\tilde K_0(z^\beta,z)+Q(U,\psi_1(z)),\]
\[ \psi_2(G_\ast(z))-\psi_2(z) = -\tilde K_0(z^\beta,z)+Q(U,\psi_2(z)),\]
and letting $\chi = \psi_1-\psi_2$ we get
\[\chi(G_\ast(z)) - \chi(z) = Q(U,\psi_1(z)) - Q(U,\psi_2(z)).\]
Letting $L(z) := Q(U,\psi_1(z))-Q(U,\psi_2(z))$, we have $\chi \circ G_\ast - \chi = L$. Since $\psi_1$ and $\psi_2$ are flat, they are in $E_2$, so that $\vert \psi_j(z) \vert \leq \frac{1}{2} \vert U\vert$ for $\vert U\vert\geq R'$ as above, and moreover estimate \eqref{estimQpsi} gives $L\in E_4$ with $\Vert L \Vert_4 \leq  c_E \Vert \chi \Vert_2$. As $\chi$ is flat and satisfies $\chi \circ G_\ast - \chi = L$, Proposition \ref{Propositionconjug} with $\Lambda_0 \equiv 1$ gives $\chi  =S(L)$, and then by \eqref{defK_s} and the inclusion $\iota : E_ 3\rightarrow E_2$, Equation \eqref{estimchi} still holds. Hence here again $\Vert\chi\Vert_2=0$, so that $\chi \equiv 0$ and $\phi_1 = \phi_2 $ where $\vert U \vert\geq R'$, so that $\phi_1 = \phi_2 $ on $\s_+^{[k_0]}$ by analytic continuation.

\textbf{Complete conjugacy: }We would now like to construct a complete conjugacy $\Phi_+ = (\Phi_{1,+},...,\Phi_{n,+})$, with each $\Phi_{i,+}$ tangent to the identity such that
\begin{equation} \Phi_+ \circ G_\ast = G \circ\Phi_+ \label{conjugg}\end{equation}
which satisfies
\begin{equation}\label{condphi} \Phi_+(z)^\beta = \phi_+(z^\beta,z).\end{equation}
We shall construct first $\Phi_{i,+}$ for $i=1,...,m-1$, assuming the constraint \eqref{condphi} is satisfied. Then we will construct $\Phi_{m,+}$ to ensure that condition \eqref{condphi} is indeed satisfied. Finally we shall construct $\Phi_{i,+}$ for $i = m+1,...,n$. 

For all $i=1,...,m-1$, let us set
\[ \Phi_{i,+}(z) = z_i + \varphi_{i,+}(z).\]
For a given $1\leq i\leq m-1$, on the one hand, we have
\[ \Phi_{i,+}(G_\ast(z)) = G_{\ast,i}(z) + \varphi_{i,+}(G_\ast(z)) = G_i(z) + K_i(z) + \varphi_{i,+}(G_\ast(z)). \]
On the other hand, according to \re{condphi}, we have 
\begin{align*}
     G_i(\Phi_+(z)) & = \Phi_{i,+}(z)\big(\lambda_i+a_i(\Phi_+(z)^\beta)^k + b_i  (\Phi_+(z)^\beta)^{2k}\big) \\
     & = \Phi_{i,+}(z)\big(\lambda_i+a_i(\phi_+(z^\beta,z))^k + b_i  (\phi_+(z^\beta,z))^{2k}\big) \\
     & = (z_i + \varphi_{i,+}(z))\big(\lambda_i+a_i(\phi_+(z^\beta,z))^k + b_i  (\phi_+(z^\beta,z))^{2k}\big).
     \end{align*}
Hence, the $i$-th component of conjugacy equation \eqref{conjugg} can be written as
\begin{align} & G_i(z) + K_i(z) + \varphi_{i,+}(G_\ast(z)) = \label{equisect}\\
	& z_i \big(\lambda_i+a_i(\phi_+(z^\beta,z))^k + b_i  (\phi_+(z^\beta,z))^{2k}\big) + \varphi_{i,+}(z)\big(\lambda_i+a_i(\phi_+(z^\beta,z))^k + b_i  (\phi_+(z^\beta,z))^{2k}\big).\nonumber
\end{align}
Note that 
\begin{align*}
G_i(z) - z_i \big(\lambda_i+a_i(\phi_+(z^\beta,z))^k + b_i  (\phi_+(z^\beta,z))^{2k}\big) & = z_i (\lambda_i + a_i (z^{\beta})^k + b_i  (z^{\beta})^{2k})\\
& - z_i(\lambda_i + a_i \phi_+(z^\beta,z)^k + b_i  \phi_+(z^\beta,z)^{2k}) \\
& = z_ia_i((z^\beta)^k - \phi_+(z^\beta,z)^k) \\
&+ z_ib_i  ((z^\beta)^{2k}-\phi_+(z^\beta,z)^{2k}).
\end{align*}
but since $\phi_+ (u,z)= u + \varphi_{0,+}(u,z)$ with $\varphi_{0,+}\in \B^{\infty}$ flat, we obtain that
\[ G_i(z) - z_i (\lambda_i + a_i \phi_+(z^\beta,z)^k + b_i  \phi_+(z^\beta,z)^{2k}) = \varepsilon_i(z)\in B_{\pi}^{\infty} \]
is flat with respect to $z^{\beta}$ uniformly in $z$. Finally, the $i$th component of \eqref{conjugg} can be written as, for $i=1,...,m-1$:
\begin{equation}\label{mainequi}
 \varepsilon_i(z) + K_i(z) + \varphi_{i,+}(G_\ast(z)) = \varphi_{i,+}(z)(\lambda_i + a_i \phi_+(z^\beta,z)^k + b_i  \phi_+(z^\beta,z)^{2k}). 
     \end{equation}

 Let $\tilde K_i(u,z)\in \B^\infty$ be an extension of $-\varepsilon_i(z)-K_i(z)$ and $\Lambda_i(u,z)=\lambda_i + a_i \phi_+(u,z)^k+b_i  \phi_+(u,z)^{2k}$ (since $\phi_+(u,z)=u+\varphi_{0,+}(u,z)$ with $\varphi_{0,+}$ flat, this $\Lambda_i$ is of the form \eqref{defLambda} up to a flat term, hence Proposition \ref{Propositionconjug} applies) and let us consider the equation in the unknown $\tilde\varphi_{i,+}(u,z)$:

\begin{equation}\label{unfoldequ}
 \tilde \varphi_{i,+}(\tilde G_{\ast,0}(u,z),\tilde G_\ast(u,z)) - \tilde\varphi_{i,+}(u,z)\Lambda_i(u,z) = \tilde K_i(u,z).
\end{equation}
According to Proposition \ref{Propositionconjug}, the previous equation has a solution  $\tilde\varphi_{i,+}(u,z)\in \B^{\infty}$. Then, the  restriction of \re{unfoldequ} to $\Sigma$ reads, for all $z$ such that $z^\beta \in \delta_+^{[k_0]}(\alpha,r)$,
\begin{equation}\label{unfoldequ2}
 \tilde \varphi_{i,+}(\tilde G_{\ast,0}(z^\beta,z),\tilde G_\ast(z^{\beta},z)) - \tilde\varphi_{i,+}(z^{\beta},z)\Lambda_i(z^{\beta},z) = \tilde K_i(z^\beta,z).
\end{equation}
By definition, we have $\tilde G_\ast(z^{\beta},z)=G_\ast(z)$, and $\tilde G_\ast(z^{\beta},z)^{\beta}=G_\ast(z)^{\beta}=G_{\ast,0} (z)$. Hence, let us define $\varphi_{i,+}(z):=\tilde\varphi_{i,+}(z^{\beta},z)\in B_{\pi}^{\infty}$,
then
\[ \tilde \varphi_{i,+}(\tilde G_{\ast,0}(z^\beta,z),\tilde G_\ast(z^\beta,z)) = \tilde \varphi_{i,+}(G_\ast(z)^\beta,G_\ast(z)) = \varphi_{i,+}(G_\ast(z))\]
and equation \eqref{unfoldequ2} becomes
\[ \varphi_{i,+}(G_\ast(z)) - \varphi_{i,+}(z)\Lambda_i(z^\beta,z) = \tilde K_i(z^\beta,z).\]
Since $\tilde K_i(z^\beta,z) = -\varepsilon_i(z)-K_i(z)$ and $\Lambda_i(z^\beta,z)=\lambda_i + a_i \phi_+(z^\beta,z)^k+b_i  \phi_+(z^\beta,z)^{2k}$, equation \eqref{unfoldequ2} becomes exactly \re{mainequi} and $\varphi_{i,+}$ is indeed a solution.

Applying Proposition \ref{Propositionconjug} for each $i=1,\ldots, m-1$, we obtain that $\varphi_{i,+}$ is holomorphic on $\s_+^{[k_0]}$, flat in $z^{\beta}$ uniformly in $z$ and unique, which defines holomorphic maps $\Phi_{1,+},..., \Phi_{m-1,+}$ tangent to the identity.

\textbf{Construction of $\Phi_{m,+}$: } Let us set
\[\Phi_{m,+}(z) := \left(\frac{\phi_+(z^\beta,z)}{\Phi_{1,+}(z)^{\beta_1}...\Phi_{m-1,+}(z)^{\beta_{m-1}}}\right)^\frac{1}{\beta_{m}}\]
so that equation \eqref{condphi} is automatically satisfied, that is to say
\begin{equation}\label{conjugstable}\prod_{i=1}^m\Phi_{i,+}(z)^{\beta_i} = \phi_+(z^\beta,z).\end{equation}

We first prove that $\Phi_{m,+}$ is well defined: notice that
\[\prod_{i=1}^{m-1}\Phi_{i,+}(z)^{\beta_i} = \prod_{i=1}^{m-1}(z_i + \varphi_{i,+}(z))^{\beta_i} = \frac{z^\beta}{z_m^{\beta_m}} + \sigma(z)\]
where $\sigma$ is flat with respect to $z^{\beta}$, then

\begin{equation}\label{phiplat} \frac{\phi_+(z^\beta,z)}{\Phi_{1,+}(z)^{\beta_1}...\Phi_{m-1,+}(z)^{\beta_{m-1}}} = \frac{z^\beta + \varphi_{0,+}(z ^\beta,z)}{\frac{z^\beta}{z_m^{\beta_m}} +  \sigma(z)} = z_m^{\beta_m} \left( \frac{1 + \frac{\varphi_{0,+}(z ^\beta,z)}{z^\beta}}{1 + z_m^{\beta_m}\frac{\sigma(z)}{z^\beta}}\right) = z_m^{\beta_m}(1 + o(1)). \end{equation}
Indeed, $\frac{\varphi_{0,+}(z^\beta,z)}{z^\beta}  
 \rightarrow 0$ and $\frac{z_m^{\beta_m} \sigma(z)}{z^\beta} 
 \rightarrow 0$ as $z\in \s_+^{[k_0]} \rightarrow 0 $ as $\varphi_{0,+}(z^\beta, z)$ and $\sigma(z)$ are flat with respect to $z^{\beta}$.
In the sectorial domain $\s_+^{[k_0]}$, $z_m$ does not vanish. We choose the ${\frac{1}{\beta_m}}$-root so that $\Phi_{m,+}(z)=z_m(1+o(1))$. $\Phi_{m,+}$ is well defined (recall that $\beta_m \neq 0$ by assumption) and $\Phi_{m,+}(z)-z_m\in B_{\pi}^{\infty}$. We set $\varphi_{m,+}(z) := \Phi_{m,+}(z)-z_m$.

 We now need to check that $\Phi_{m,+}$ satisfies the conjugacy equation:
 \[\Phi_{m,+}(G_\ast (z)) = G_m(\Phi_+(z)).\]
We have, on the one hand, by \eqref{conjugstable} and \eqref{equ0},
\[\prod_{i=1}^mG_i(\Phi_+(z))^{\beta_i} = G_0(\Phi_+(z)^\beta) = G_0(\phi_+(z^\beta,z)) = \phi_+(G_{\ast,0}(z),G_\ast(z)) \]
(the last equality comes from \eqref{equ0}).
On the other hand
\[ \prod_{i=1}^m\Phi_{i,+}(G_\ast(z))^{\beta_i} = \phi_+(G_\ast(z)^\beta,G_\ast(z)) = \phi_+(G_{\ast,0}(z),G_\ast(z)).\]
Combining these two equalities, we get
\begin{equation}\label{eqphi} \prod_{i=1}^m G_i(\Phi_+(z))^{\beta_i} = \prod_{i=1}^m\Phi_{i,+}(G_\ast(z))^{\beta_i}.\end{equation}
We already know that, for all $i=1,...,m-1$,
\[ G_i(\Phi_+(z))^{\beta_i} = \Phi_{i,+}(G_\ast(z))^{\beta_i}\]
so \eqref{eqphi} becomes
\[G_m(\Phi_+(z))^{\beta_m} = \Phi_{m,+}(G_\ast(z))^{\beta_m}.\]
Since on $\s_+^{[k_0]}$
\[ \Phi_{m,+}(G_\ast(z)) = G_{\ast,m}(z)+\varphi_{m,+}(G_\ast(z)) = G_m(z)+K_m(z)+\varphi_{m,+}(G_\ast(z)) = G_m(z)(1+o(1))\]
and
\[ G_{m}(\Phi_+(z)) = G_m(z)(1+o(1)),\]
then both sides of the equation are of the form $G_m(z)^{\beta_m}(1+o(1))$ and since $G_m(z) \neq 0$ on $\s_+^{[k_0]}$, we can take the $\beta_m$-root  which gives
\[G_m(\Phi_+(z)) = \Phi_{m,+}(G_\ast(z)).\]

\textbf{Construction of $\Phi_{m+1,+},\ldots,\Phi_{n,+}$:} Let 
$\Phi_{i,+}(z) = z_i + \varphi_{i,+}(z)$ for all $i=m+1,\ldots,n$. Recall that coordinate $i=m+1,...,n$ of the conjugacy equation is
\[ \Phi_{i,+}(G_\ast(z)) = G_i(\Phi_{+}(z))\]
with $G_i(z) = z_i(\lambda_i + P_i(z^{\beta}))$. Therefore, we have
\begin{align*}
\Phi_{i,+}(G_\ast(z)) & = G_{\ast,i}(z)+\varphi_{i,+}(G_\ast(z)) = G_i(z) + K_i(z) + \varphi_{i,+}(G_\ast(z)) \\
& = z_i(\lambda_i+P_i(z^{\beta}))+K_i(z) + \varphi_{i,+}(G_\ast(z)).
\end{align*}
Since $\Phi_+(z)^\beta = \phi_+(z^\beta,z)$,
$$
    G_i(\Phi_+(z)) = \Phi_{i,+}(z)(\lambda_i+P_i(\Phi_+(z)^{\beta})) = (z_i + \varphi_{i,+}(z))(\lambda_i+P_i(\phi_+(z^\beta,z))),
$$
then
\begin{align}\label{equgen}
  \Phi_{i,+}(G_\ast(z)) & = G_i(\Phi_{+}(z)) \Longleftrightarrow \\
  z_i(\lambda_i+P_i(z^{\beta}))+K_i(z) + \varphi_{i,+}(G_\ast(z)) &= (z_i + \varphi_{i,+}(z))(\lambda_i+P_i(\phi_+(z^\beta,z))). \nonumber
\end{align}
We have
\[   z_i(\lambda_i+P_i(u)) -  z_i(\lambda_i+P_i(\phi_+(u,z))) = z_i(P_i(u)-P_i(\phi_+(u,z))),  \]
 and then we can write
\[ z_i(\lambda_i+P_i(u)) -  z_i(\lambda_i+P_i(\phi_+(u,z))) = z_i \varepsilon_i(u,z),\]
where $\varepsilon_i\in \B^{\infty}$ is flat w.r.t $u$, uniformly in $z$.
Let $\tilde K_i\in \B^{\infty}$ be an extension of $K_i\in B_{\pi}^{\infty}$ from \rl{lemRM}. Let us consider the functional equation of unknown $\tilde \varphi_{i,+}(u,z)$, whose restriction to $\Sigma$ will give the $ \varphi_{i,+}(z)$:
\begin{align*}
 \varepsilon_i(u,z) + \tilde K_i(u,z) + \tilde \varphi_{i,+}(\tilde G_{\ast,0}(u,z),\tilde G_\ast(u,z)) = \tilde \varphi_{i,+}(u,z)(\lambda_i + P_i(\phi_+(u,z))).
\end{align*}
It can be written as
\begin{equation}\label{equagen2} \tilde \varphi_{i,+}(\tilde G_{\ast,0}(u,z),\tilde G_\ast(u,z))-\tilde\varphi_{i,+}(u,z)\Lambda_i(u,z) = \tilde L_i(u,z)\end{equation}
with $\Lambda_i(u,z) = \lambda_i + P_i(\phi_+(u,z))$ and $\tilde L_i(u,z) = -\varepsilon_i(u,z)-\tilde K_i(u,z)$ is flat in $u$ uniformly in $z$. Notice that, since $\phi_+(u,z)=u+\varphi_{0,+}(u,z)$ with $\varphi_{0,+}$ flat, we have $\Lambda_i(u,z) = \lambda_i + a_{i,1}u^k + \OO(u^{k+1}) + \tilde\varepsilon_i(u,z)$ where $\tilde\varepsilon_i$ is flat in $u$ uniformly in $z$. As $\tilde\varepsilon_i$ is flat, it is bounded by $C\vert u\vert^{k+1}$, so this $\Lambda_i$ is of the form \eqref{defLambda}. Applying Proposition \ref{Propositionconjug}, we obtain the solutions. As above, we define function $\varphi_{i,+}(z):= \tilde\varphi_{i,+}(z^{\beta},z)$ as the solution $\tilde\varphi_{i,+}$ restricted to $\Sigma$. It is a solution of \re{equgen}. Indeed, we have
\begin{align*}
\tilde \varphi_{i,+}(\tilde G_{\ast,0}(z^\beta,z),\tilde G_\ast(z^{\beta},z))-\tilde\varphi_{i,+}(z^{\beta},z)\Lambda_i(z^{\beta},z) &= \tilde L_i(z^{\beta},z)= -\varepsilon_i(z^{\beta},z)- K_i(z)
\end{align*}

Since $\tilde G_{\ast}(z^\beta,z) = G_\ast(z)$ and $\tilde G_{\ast,0}(z^\beta,z) = G_{\ast,0}(z)$, we get
\[ \tilde \varphi_{i,+}(\tilde G_{\ast,0}(z^\beta,z), \tilde G_{\ast}(z^\beta,z)) = \tilde \varphi_{i,+}(G_{\ast}(z)^\beta,G_\ast(z)) = \varphi_{i,+}(G_\ast(z))\]
so equation \eqref{equagen2} becomes
\[ \varphi_{i,+}(G_\ast(z)) - \varphi_{i,+}(z)\Lambda_i(z^\beta,z) = \tilde L_i(z^\beta,z)\]
and  using $\Lambda_i(z^\beta,z) = \lambda_i + P_i(\phi_+(z^\beta,z))$ and $\tilde L_i (z^\beta,z) = - \varepsilon_i(z^\beta,z)-K_i(z)$, we obtain equation \eqref{equgen}.

\textbf{Uniqueness of the sectorial conjugacy: } Let $\tilde \Phi = (\tilde \Phi_1,...,\tilde \Phi_n)$ be another conjugacy from $G_\ast$ to $G$ on $\s_+^{[k_0]}$, that is to say $\tilde \Phi \circ G_\ast = G \circ \tilde \Phi$, such that $\tilde \Phi_i(z) = z_i + \tilde \varphi_i(z) $ with each $\tilde \varphi_i$ flat at $0$ with respect to $z^\beta$.  Moreover, since $G(z)^\beta = G_{0}(z^\beta)$ (see \eqref{defG0u}), conjugacy equation at power $\beta$ gives 
\[ \tilde \Phi ( G_\ast(z))^\beta = G(\tilde \Phi(z))^\beta = G_0(\tilde \Phi(z)^\beta) \]
so that letting $\tilde \phi = \tilde \Phi^\beta$, we have $\tilde \phi(z) = z^\beta +flat$ is a solution of \eqref{equ0sigma} and by the uniqueness in the construction of $\Phi_+$ on $\Sigma$ proved above, we get $\tilde \phi(z) = \phi_+(z^\beta,z)$, that is to say $\tilde \Phi(z)^\beta = \phi_+(z^\beta,z)$ and $\tilde \Phi$ satisfies \eqref{condphi}. Hence the construction of the conjugacy used in \eqref{mainequi} for $i=1,...,m-1$ and in \eqref{equgen} for $i=m+1,...,n$ apply in the same way to $\tilde \Phi$, and each $\tilde \varphi_i$ satisfies the same equation as $\varphi_{i,+}$ and is flat at $0$ with respect to $z^\beta$, so that $\tilde \Phi_i \equiv \Phi_{i,+}$ for all $i=1,...,m-1,m+1,...,n$. 

Finally, since \eqref{condphi} is satisfied for $\tilde \Phi$ and $\Phi_+$, we get that on $\s_+^{[k_0]}$
\[ \prod_{i=1}^m \tilde \Phi_i(z)^{\beta_i} = \phi_+(z^\beta,z)= \prod_{i=1}^m \Phi_{i,+}(z)^{\beta_i}\]
and since $\tilde \Phi_i = \Phi_{i,+}$ for $i=1,...,m-1$ do not vanish on $\s_+^{[k_0]}$, this gives
\[ \tilde \Phi_m(z)^{\beta_m} = \Phi_{m,+}(z)^{\beta_m}.\]
These two quantities are of the form $z_m^{\beta_m}(1+o(1))$ so that taking the $\beta_m$-root as above, we get $\tilde \Phi_m = \Phi_{m,+}$. Finally $\tilde \Phi = \Phi_+$ which concludes the proof.

The above construction is for a fixed $k_0 \in \{0, \dots,k-1\}$ and for the sectors $\delta_+^{[k_0]}(\alpha,r)$. This then gives $k$ sectorial conjugacies $\Phi_+^{[k_0]}$. The same argument applied to the sectors $\delta_-^{[k_0]}(\alpha,r)$ (on $G^{-1}$ and $G_\ast^{-1}$) produces $k$ sectorial conjugacies $\Phi_-^{[k_0]}$. With the notation \eqref{sector}, we have $S_{2k_0} = \delta_+^{[k_0]}(\alpha,r)$ and $S_{2k_0+1} = \delta_-^{[k_0]}(\alpha,r)$ for $k_0=0,...,k-1$, so that $\Phi_+^{[k_0]}$ and $\Phi_-^{[k_0]}$ give $2k$ conjugacies $\Psi_i$ of Theorem \ref{main-sect}. In total, we obtain $2k$ sectorial domains covering $\D_\rho^n \setminus E$ (where $E$ was defined in \eqref{defE}). This completes the proof of Theorem \ref{main-sect}.

\section{Holomorphic classification}
In this section, we prove \rt{classif}. The argument is standard (e.g. \cite{Malgrange-bourbaki, Ram-Mart1, Ram-Mart2, Stolo-classif}). 
Let $F_1,F_2$ be two germs of biholomorphisms of the form \re{system3} having the same normal form $\hat G$ as in \re{system5}. Without loss of generality we can assume that $F_1$ and $F_2$ are defined on a same neighborhood of the origin and that they can be normalized to $\hat G$ on the same domains ${\mathcal D}_i:= \{z\in\C^n,\,z^{\beta}\in S_i\}\cap \D^n_\rho$,  with sectors $S_i$ as defined in \re{sector}. Let $\{\Psi_{j,i}\}_i$ be the associated collection of normalization transformations given by \rt{main-sect}, $j=1,2$: $\Psi_{j,i}\circ F_j=\hat G\circ \Psi_{j,i}$. Let us consider the associated cocycle $C_j:=\{\Psi_{j,i}\circ \Psi_{j,i+1}^{-1}\}=:\{C_{j,i}\}$. Each $C_{j,i}$ is tangent to the identity and commutes with $\hat G$ on ${\mathcal D}_i\cap {\mathcal D}_{i+1}$ as 
$$
\hat G\circ \Psi_{j,i}\circ\Psi_{j,i+1}^{-1}= \Psi_{j,i}\circ F_j\circ \Psi_{j,i+1}^{-1}= \Psi_{j,i}\circ \Psi_{j,i+1}^{-1}\circ \hat G.
$$
Assume first that $C_1=C_2$. Then $\Psi_{2,i}^{-1}\circ\Psi_{1,i}=\Psi_{2,i+1}^{-1}\circ\Psi_{1,i+1}$ on ${\mathcal D}_i\cap {\mathcal D}_{i+1}$. Since $S_i\cap S_{i+2} = \emptyset$, and $S_i\cap S_{i+1} \neq \emptyset$, two domains $\mathcal{D}_i$ and $\mathcal{D}_j$ have a nonempty intersection when $j=i+1$ and $j=i-1$. Then $\Psi_{2,i}^{-1}\circ\Psi_{1,i}$ and $\Psi_{2,i+1}^{-1}\circ\Psi_{1,i+1}$ glue into a single holomorphic map $\Psi$ on $\cup_i \mathcal{D}_i = \D_\rho^n \setminus E$, whose restriction to each $\mathcal{D}_i$ is $\Psi^{-1}_{2,i}\circ \Psi_{1,i}$. Moreover, since $\Psi$ is bounded near $E$ (since each $\Psi_{j,i}$ is asymptotic to the identity along $E$), by the vanishing singularity theorem, $\Psi$ extends holomorphically to $\D_\rho^n$ (which is a full neighborhood of the origin in $\C^n$), with $\Psi(0)=0$ and tangent to the identity at the origin. On each $\mathcal{D}_i$, we have
\[ \Psi \circ F_1 = \Psi^{-1}_{2,i}\circ \Psi_{1,i}\circ F_1 = \Psi^{-1}_{2,i}\circ \hat G \circ \Psi_{1,i}=F_2\circ \Psi^{-1}_{2,i}\circ\Psi_{1,i} = F_2 \circ \Psi\]
hence $\Psi\circ F_1=F_2\circ\Psi$ on $\D_\rho^n \setminus E$ and then on $\D_{\rho}^n$. In other words, $F_1$ and $F_2$ are holomorphically conjugate by $\Psi$ in a neighborhood of the origin and this proves: $C_1 = C_2 \Rightarrow F_1$ and $F_2$ are holomorphically conjugated. Assume now that $\Theta \circ F_1 = F_2 \circ \Theta$ for some biholomorphism $\Theta$ of $(\C^n,0)$ tangent to the identity there.  Then
\[\Psi_{2,i} \circ \Theta \circ F_1 = \Psi_{2,i}\circ F_2 \circ \Theta = \hat G \circ \Psi_{2,i} \circ \Theta.\]
 Hence $\Psi_{2,i}\circ \Theta$ is also a sectorial normalization of $F_1$ on $\mathcal{D}_i$. We can assume that for each $i$, $\Psi_{2,i}\circ \Theta$ is defined on $\mathcal{D}_i$. Furthermore, $\Psi_{2,i}\circ\Theta$ has the same asymptotic expansion at the origin as $\Psi_{1,i}$. Hence, $(\Psi_{2,i}\circ \Theta)\circ\Psi_{1,i}^{-1}$ is an asymptotically flat  perturbation of Identity conjugating $\hat G$ to itself on $\mathcal{D}_i$. By the uniqueness assertion for sectorial conjugacy in section 4, we get $\Psi_{2,i}\circ \Theta = \Psi_{1,i}$ for all $i$.

Hence
$$
\Psi_{1,i} \circ\Psi_{1,i+1}^{-1}= \Psi_{2,i}\circ\Theta\circ\Theta^{-1} \circ \Psi_{2,i+1}^{-1}= \Psi_{2,i} \circ\Psi_{2,i+1}^{-1},
$$
so that $C_1=C_2$.

		\bibliographystyle{alpha}
		\bibliography{biblio-CS}

\begin{thebibliography}{KPRR25}

\bibitem[Aba15]{abate-parabolic}
Marco Abate.
\newblock Fatou flowers and parabolic curves.
\newblock In {\em Complex analysis and geometry}, volume 144 of {\em Springer
  Proc. Math. Stat.}, pages 1--39. Springer, Tokyo, 2015.

\bibitem[Arn88]{Arnold}
V.~I. Arnold.
\newblock {\em Geometrical methods in the theory of ordinary differential
  equations}, volume 250 of {\em Grundlehren der mathematischen Wissenschaften
  [Fundamental Principles of Mathematical Sciences]}.
\newblock Springer-Verlag, New York, second edition, 1988.
\newblock Translated from the Russian by Joseph Sz\"{u}cs [J\'{o}zsef M.
  Sz\H{u}cs].

\bibitem[Bir39]{birkhoff}
George~D. Birkhoff.
\newblock D\'{e}formations analytiques et fonctions auto-\'{e}quivalentes.
\newblock {\em Ann. Inst. H. Poincar\'{e}}, 9:51--122, 1939.

\bibitem[BM04]{bracci-molino}
Filippo Bracci and Laura Molino.
\newblock The dynamics near quasi-parabolic fixed points of holomorphic
  diffeomorphisms in {$\Bbb C^2$}.
\newblock {\em Amer. J. Math.}, 126(3):671--686, 2004.

\bibitem[Bru72]{Bruno}
A.D. Bruno.
\newblock {Analytical form of differential equations}.
\newblock {\em Trans. Mosc. Math. Soc}, 25,131-288(1971); 26,199-239(1972),
  1971-1972.

\bibitem[BZ13]{Bracci}
Filippo Bracci and Dmitri Zaitsev.
\newblock Dynamics of one-resonant biholomorphisms.
\newblock {\em J. Eur. Math. Soc. (JEMS)}, 15(1):179--200, 2013.

\bibitem[CS87]{camacho-sad-book}
C\'{e}sar Camacho and Paulo Sad.
\newblock {\em Pontos singulares de equa\c{c}\~{o}es diferenciais
  anal\'{\i}ticas}.
\newblock 16$^{\rm o}$ Col\'{o}quio Brasileiro de Matem\'{a}tica. [16th
  Brazilian Mathematics Colloquium]. Instituto de Matem\'{a}tica Pura e
  Aplicada (IMPA), Rio de Janeiro, 1987.

\bibitem[Eca]{EcalleIII}
J.~Ecalle.
\newblock {Sur les fonctions r\'{e}surgentes}.
\newblock I,II,III Publ. Math. d'Orsay.

\bibitem[Fat24]{fatou}
P.~Fatou.
\newblock Substitutions analytiques et \'{e}quations fonctionnelles \`a deux
  variables.
\newblock {\em Ann. Sci. \'{E}cole Norm. Sup. (3)}, 41:67--142, 1924.

\bibitem[Hak94]{hakim94}
Monique Hakim.
\newblock Attracting domains for semi-attractive transformations of {${\bf
  C}^p$}.
\newblock {\em Publ. Mat.}, 38(2):479--499, 1994.

\bibitem[IY08]{ilyashenko-yakovenko-book}
Y.~Ilyashenko and S.~Yakovenko.
\newblock {\em Lectures on analytic differential equations}, volume~86 of {\em
  Graduate Studies in Mathematics}.
\newblock American Mathematical Society, Providence, RI, 2008.

\bibitem[Jen08]{jenkins-parbolic}
Adrian Jenkins.
\newblock Further reductions of {P}oincar\'{e}-{D}ulac normal forms in {${\bf
  C}^{n+1}$}.
\newblock {\em Proc. Amer. Math. Soc.}, 136(5):1671--1680, 2008.

\bibitem[KH95]{Kat}
Anatole Katok and Boris Hasselblatt.
\newblock {\em Introduction to the modern theory of dynamical systems},
  volume~54 of {\em Encyclopedia of Mathematics and its Applications}.
\newblock Cambridge University Press, Cambridge, 1995.
\newblock With a supplementary chapter by Katok and Leonardo Mendoza.

\bibitem[Kim71]{kimura}
Tosihusa Kimura.
\newblock On the iteration of analytic functions.
\newblock {\em Funkcial. Ekvac.}, 14:197--238, 1971.

\bibitem[KPRR25]{KMRR25}
M.~Klime\\check{s}, M.~Pavao, G.~Radunovi\'c, and M.~Resman.
\newblock Reading analytic invariants of parabolic diffeomorphisms from their
  orbits.
\newblock {\em ANNALI SCUOLA NORMALE SUPERIORE - CLASSE DI SCIENZE},
  26:1017--1047, 2025.

\bibitem[KS22]{stolo-klimes}
M.~Klime\c{s} and L.~Stolovitch.
\newblock Reversible parabolic diffeomorphisms of $(\mathbb{C}^2,0)$ and
  exceptional hyperbolic {CR}-singularities, 2022.
\newblock ARXIV.2204.09449, p.1-110.

\bibitem[Lea97]{Leau}
L\'{e}opold Leau.
\newblock \'{E}tude sur les \'{e}quations fonctionnelles \`a une ou \`a
  plusieurs variables.
\newblock {\em Ann. Fac. Sci. Toulouse Sci. Math. Sci. Phys.}, 11(2):E1--E24,
  1897.

\bibitem[LR16]{Loday}
Mich\`ele Loday-Richaud.
\newblock {\em Divergent series, summability and resurgence. {II}}, volume 2154
  of {\em Lecture Notes in Mathematics}.
\newblock Springer, [Cham], 2016.
\newblock Simple and multiple summability, With prefaces by Jean-Pierre Ramis,
  \'{E}ric Delabaere, Claude Mitschi and David Sauzin.

\bibitem[Mal82]{Malgrange-bourbaki}
Bernard Malgrange.
\newblock Travaux d'\'{E}calle et de {M}artinet-{R}amis sur les syst\`emes
  dynamiques.
\newblock In {\em Bourbaki {S}eminar, {V}ol. 1981/1982}, volume 92-93 of {\em
  Ast\'{e}risque}, pages 59--73. Soc. Math. France, Paris, 1982.

\bibitem[Mal95]{malgrange-somm}
B.~Malgrange.
\newblock Sommation des s\'eries divergentes.
\newblock {\em Exposition. Math.}, 13(2-3):163--222, 1995.

\bibitem[Mil06]{milnor}
John Milnor.
\newblock {\em Dynamics in one complex variable}, volume 160 of {\em Annals of
  Mathematics Studies}.
\newblock Princeton University Press, Princeton, NJ, third edition, 2006.

\bibitem[MR82]{Ram-Mart1}
J.~Martinet and J.-P. Ramis.
\newblock {Probl\`{e}mes de modules pour des \'{e}quations diff\'{e}rentielles
  non lin\'{e}aires du premier ordre}.
\newblock {\em Publ. Math. I.H.E.S}, 55,63-164, 1982.

\bibitem[MR83]{Ram-Mart2}
J.~Martinet and J.-P. Ramis.
\newblock {Classification analytique des \'{e}quations diff\'{e}rentielles non
  lin\'{e}aires r\'{e}sonantes du premier ordre}.
\newblock {\em Ann. Sci. E.N.S}, 4\`{e}me s\'{e}rie,16,571-621, 1983.

\bibitem[P\"86]{Pos86}
J\"{u}rgen P\"{o}schel.
\newblock On invariant manifolds of complex analytic mappings near fixed
  points.
\newblock {\em Exposition. Math.}, 4(2):97--109, 1986.

\bibitem[R\"02]{russmann-lin}
Helmut R\"{u}ssmann.
\newblock Stability of elliptic fixed points of analytic area-preserving
  mappings under the {B}runo condition.
\newblock {\em Ergodic Theory Dynam. Systems}, 22(5):1551--1573, 2002.

\bibitem[Ram79]{borel-ritt-ramis}
J.-P. Ramis.
\newblock \`a propos du th\'{e}or\`eme de {B}orel-{R}itt \`a plusieurs
  variables.
\newblock In {\em \'{E}quations diff\'{e}rentielles et syst\`emes de {P}faff
  dans le champ complexe ({S}em., {I}nst. {R}ech. {M}ath. {A}vanc\'{e}e,
  {S}trasbourg, 1975)}, volume 712 of {\em Lecture Notes in Math.}, pages
  289--292. Springer, Berlin, 1979.
\newblock Appendice \`a l'article: ``\'{E}tude de certains syst\`emes de Pfaff
  avec singularit\'{e}s'' [\'{E}quations diff\'{e}rentielles et syst\`emes de
  Pfaff dans le champ complexe, pp. 131--288, Lecture Notes in Math., 712,.

\bibitem[Ram80]{ramis-ksum}
J.-P. Ramis.
\newblock Les s\'eries {$k$}-sommables et leurs applications.
\newblock In {\em Complex analysis, microlocal calculus and relativistic
  quantum theory (Proc. Internat. Colloq., Centre Phys., Les Houches, 1979)},
  volume 126 of {\em Lecture Notes in Phys.}, pages 178--199. Springer, 1980.

\bibitem[Ron10]{rong10}
Feng Rong.
\newblock Quasi-parabolic analytic transformations of {${\bf C}^n$}.
  {P}arabolic manifolds.
\newblock {\em Ark. Mat.}, 48(2):361--370, 2010.

\bibitem[Ron16]{rong16}
Feng Rong.
\newblock Attracting domains for quasi-parabolic analytic transformations of
  {$\Bbb{C}^2$}.
\newblock {\em Proc. Roy. Soc. Edinburgh Sect. A}, 146(3):665--670, 2016.

\bibitem[RS93]{ramis-stolo-cours}
J.-P. Ramis and L.~Stolovitch.
\newblock {Divergent series and holomorphic dynamical systems}, 1993.
\newblock Unpublished lecture notes from J.-P. Ramis lecture at the SMS {\it
  Bifurcations et orbites p\'{e}riodiques des champs de vecteurs}, Montr\'eal
  1992. 57p.

\bibitem[Sha20]{semihyperbolic}
P.~A. Shaikhullina.
\newblock A realization theorem in the problem of a strict analytical
  classification of typical germs of semihyperbolic mappings.
\newblock {\em Chelyabinski{\u{\i}} Fiz.-Mat. Zh.}, 5(1):105--113, 2020.

\bibitem[Sto96]{Stolo-classif}
L.~Stolovitch.
\newblock {Classification analytique de champs de vecteurs $1$-r\'{e}sonnants
  de $(\Bbb C^n,0)$}.
\newblock {\em Asymptotic Analysis}, 12:91--143, 1996.

\bibitem[Sto15]{Sto15}
L.~Stolovitch.
\newblock Family of intersecting totally real manifolds of {$(\Bbb C^n,0)$} and
  germs of holomorphic diffeomorphisms.
\newblock {\em Bull. Soc. math. France}, 143(1):247--263, 2015.

\bibitem[SV17]{voronin-semihyperbolic}
Polina~Alekseevna Sha{\u{\i}}khullina and Serge{\u{\i}}~Mikha{\u{\i}}lovich
  Voronin.
\newblock Functional invariant for typical germs of semihyperbolic mappings.
\newblock {\em Chelyabinski{\u{\i}} Fiz.-Mat. Zh.}, 2(4):447--455, 2017.

\bibitem[Tey25]{teyssier-real}
Lo\"{\i}c Teyssier.
\newblock Spherical normal forms for germs of parabolic line biholomorphisms.
\newblock {\em Ergodic Theory Dynam. Systems}, 45(10):3255--3304, 2025.

\bibitem[Ued99]{ueda-contraction}
T.~Ueda.
\newblock Normal forms of attracting maps.
\newblock {\em Math. J. Toyama Univ.}, 22:25--34, 1999.

\bibitem[Vor81]{voronin}
S.~M. Voronin.
\newblock Analytic classification of germs of conformal mappings {$({\bf
  C},\,0)\rightarrow ({\bf C},\,0)$}.
\newblock {\em Funktsional. Anal. i Prilozhen.}, 15(1):1--17, 96, 1981.

\end{thebibliography}

	\end{document}